\newcommand\bb\mathbb
\newcommand\bs\boldsymbol
\newcommand{\ELBO}{\operatorname{ELBO}}
\newcommand{\tr}{\operatorname{tr}}
\newcommand{\eps}{\epsilon}
\newcommand{\ip}[1]{\langle #1 \rangle}
\newcommand{\KL}{D_{\mathrm{KL}}}
\newcommand{\ff}{{\bs{f\!f}}}
\newcommand{\T}{{\mathrm T}}
\newcommand{\lambdavar}{\hat\lambda_n^{\mathrm{var}}}
\begin{document}

\title{Adaptive sparse variational approximations for Gaussian process regression}

\author{\name Dennis Nieman \email dennis.nieman@hu-berlin.de \\
	\addr Institut für Mathematik \\
	Humboldt-Universität zu Berlin \\
    \AND
    \name Botond Szabó \email botond.szabo@unibocconi.it \\
    \addr Department of Decision Sciences, \\
    Bocconi Institute for Data Science and Analytics, \\
    Bocconi University, Milan
}

\editor{\ldots}

\maketitle

\begin{abstract}
Accurate tuning of hyperparameters is crucial to ensure that models can generalise effectively across different settings. In this paper, we present theoretical guarantees for hyperparameter selection using variational Bayes in the nonparametric regression model. We construct a variational approximation to a hierarchical Bayes procedure, and derive upper bounds for the contraction rate of the variational posterior in an abstract setting. The theory is applied to various Gaussian process priors and variational classes, resulting in minimax optimal rates. Our theoretical results are accompanied with numerical analysis both on synthetic and real world data sets.
\end{abstract}

\begin{keywords}
variational inference, Bayesian model selection, Gaussian processes, nonparametric regression, adaptation, posterior contraction rates
\end{keywords}

\section{Introduction}

A core challenge in Bayesian statistics is scalability, i.e. the computation of the posterior for large sample sizes. Variational Bayes approximation is a standard approach to speed up inference. Variational posteriors are random probability measures that minimise the Kullback-Leibler divergence between a suitable class of distributions and the otherwise hard to compute posterior. Typically, the variational class of distributions over which the optimisation takes place does not contain the original posterior, hence the variational procedure can be viewed as a projection onto this class. The projected variational distribution then approximates the posterior. During the approximation procedure one inevitably loses information and hence it is important to characterize the accuracy of the approach. Despite the wide use of variational approximations, their theoretical underpinning started to emerge only recently, see for instance \cite{alquier2020,yang2020,zhang2020,ray2022}.


	In a Bayesian procedure, the choice of prior reflects the presumed properties of the unknown parameter. In comparison to regular parametric models, where in view of the Bernstein-von Mises theorem the posterior is asymptotically normal, the prior plays a crucial role in the asymptotic behaviour of the posterior. In fact, the large-sample behaviour of the posterior typically depends intricately on the choice of prior hyperparameters, so it is vital that these are tuned correctly. Many Bayesian procedures for model selection/hyperparameter tuning have been proposed and studied over the years. The two classical approaches are hierarchical and empirical Bayes methods. In hierarchical Bayes the tuning hyperparameters are endowed with another layer of prior resulting in a multi-layer, hierarchical prior distribution, while in empirical Bayes the hyperparameters are estimated empirically and plugged into the posterior; see for instance the monograph \cite{ghosal2017} for an overview of these methods.  However, these classical methods, especially in more complex models and large data sizes, can be numerically very slow. In this paper we propose a method for variational hyperparameter tuning that speeds up the computations and provides reliable inference, as supported both by theory and empirical evidence. 

Although our approach can be more generally applied, we mainly focus on sparse variational approximations for Gaussian process (GP) posteriors in the nonparametric regression model. We consider as examples both the inducing variable approach proposed by \cite{titsias2009a} and a standard mean-field variational approach for approximating the posterior. This article extends a line of research in \cite{vbgp, vbuq}, where minimax convergence rates and reliable uncertainty quantification were derived for the variational posteriors. In these papers, however, it was assumed that the regularity of the underlying, data-generating function is known and the optimal tuning of the procedures heavily depended on it. In practice such information is usually not available, so here, in contrast, we modify the approximation scheme in such a way that the knowledge of the true regularity is no longer required. Hence, our procedure provides data-driven, adaptive inference on the unknown regularity classes.

\paragraph{Related literature.} Adaptive contraction rates for variational methods were first studied for tempered posteriors, where in Bayes' rule the likelihood is raised to a power $\alpha< 1$, diminishing its importance. In
 \cite{cherief2018} convergence rates were derived for mixture models, discussing also model selection using a regularised evidence lower bound (ELBO) criterion.  In subsequent work, \cite{cherief2019} takes a similar approach in a more general setting with discrete model selection parameter. For standard posterior distributions \cite{zhang2020} have derived optimal contraction rate guarantees for mean-field variational approximation of hierarchical sieve priors in context of the many normal means model and later extended the results to more general high-dimensional settings in \cite{zhang2020a}. Recently, an adaptive variational approach was proposed in \cite{ohn2024}, by considering a mixture of variational approximations of the individual models. However, none of the present papers addresses the Gaussian process regression model and the investigated variational algorithms are also different than the ones we focus on.

\paragraph{Contributions.} Below is a short overview of our results.
\begin{itemize}[label=--]
\item We start by comparing various variational approaches for data driven tuning of the hyperparameter.
\item Then we derive contraction rate results for general (discrete mixture) hierarchical priors in context of the nonparametric regression model and show that the proposed variational approximation inherits the contraction rate under a condition on the Kullback-Leibler divergence between these measures.
\item We apply these general results for Gaussian process priors using two different type of variational classes, i.e. inducing variable methods and a mean-field type approximation. We derive minimax rate adaptive contraction over a range of Sobolev-type regularity classes.
\item Our theoretical results are accompanied with numerical analysis, considering both synthetic and real world data sets.
\end{itemize}

\paragraph{Organization of the paper.} 
In Section~\ref{s:vb.modelselection} we review variational Bayes methods and introduce a variational approach that we later show to be adaptive on the regularity of the truth. The notation is tailored to the nonparametric regression setting but the framework is general. Next, in Section~\ref{c:adaptive.rates}, general $L^2$ contraction rate theorems on hierarchical and variational posteriors in the nonparametric regression model are given. The theory is applied in concrete examples in Section~\ref{s:ex}, including different variational methods and priors. Beside theoretical guarantees also numerical results are provided. All proofs and supporting lemmas are given in the subsequent appendix. 

\section{Model selection with variational Bayes}\label{s:vb.modelselection}

We consider the nonparametric regression model, where the goal is to infer a function $f_0 : \mathcal X \to \bb R$ from data
\begin{equation}\label{e:np.reg}
y_i = f_0(x_i) + \varepsilon_i, \qquad i=1,\ldots,n. 
\end{equation}
The domain $\mathcal X$ is a subset of $\bb R^d$, and we assume \emph{random design}, meaning that $x_1,\ldots,x_n$ are i.i.d. from some distribution $G$ on $\mathcal X$. In our theory the common variance $\sigma^2$ of the errors $\varepsilon_1,\ldots,\varepsilon_n \sim_{\text{iid}} \mathcal N(0,\sigma^2)$ is assumed known, but later on we also demonstrate empirically that it can be estimated variationally.

In the Bayesian framework one endows the functional parameter with a prior distribution. We will consider Gaussian process priors constructed using a covariance kernel $k : \mathcal X^2 \to \bb R$. Such priors $\Pi_\lambda$ typically rely on a collection of scaling or tuning hyperparameters $\lambda$, which, especially in high-dimensional and nonparametric settings, substantially influence the behaviour of the corresponding posterior. The optimal choices of these hyperparameters rely on the characteristics of the underlying truth $f_0$. Since this information is typically not available, in practice data-driven choices are considered. A fully Bayesian approach is to endow the hyperparameter $\lambda$ with a prior $\pi$, resulting in a hierarchical prior in the form 
\[ \Pi(\cdot) = \int \Pi_\lambda(\cdot) \, d\pi(\lambda), \]
where the hyper-prior $\pi$ can be a density or a probability mass function. The prior $\Pi$ is also called a \emph{mixture} (in our case a mixture of Gaussian processes). The hierarchical posterior $\Pi(\,\cdot\,| \bs x,\bs y)$ corresponding to this prior is given by Bayes' rule
\begin{equation}\label{e:BayesRule}
\frac{d\Pi(\,\cdot\,|\bs x,\bs y)}{d\Pi}(f) = \frac{p_f^{(n)}(\bs x,\bs y)}{\int p_g^{(n)}(\bs x,\bs y) \,d\Pi(g)},
\end{equation}
where $p_f^{(n)}$ denotes the Gaussian product likelihood in the nonparametric regression model, and $\bs x,\bs y$ are the respective vectors of observations in \eqref{e:np.reg}. Along the same lines we denote by $\Pi_\lambda(\,\cdot\,| \bs x,\bs y)$ the posterior corresponding to the prior $\Pi_\lambda$. Such hierarchical Bayesian procedures are widely used in the literature, however, especially for complex, high-dimensional models they face computational challenges. This is also the case in hierarchical Gaussian process regression. Hence in order to scale up the procedure the posterior is approximated by a distribution that is easier to compute.

In the variational approach the first step is to set the variational class, on which the posterior is projected. Let us consider a collection of variational classes $\mathcal Q_\lambda$ indexed by the same hyperparameter $\lambda$. The variational approximation $\tilde\Pi_\lambda(\,\cdot\,|\bs x,\bs y)$ is then defined by projecting the hierarchical posterior onto the class $\mathcal Q_\lambda$, i.e.
\begin{align*}
\tilde\Pi_\lambda(\,\cdot\,|\bs x,\bs y)=\arg\min_{Q\in \mathcal Q_\lambda}\KL(Q \| \Pi(\,\cdot\,| \bs x,\bs y)),
\end{align*}
where $\KL(Q\|P) = \int \log (\frac{dQ}{dP}) \,dQ$ whenever the integral is well-defined. Recall that using \eqref{e:BayesRule} the KL-divergence can be rewritten in the form
\[ 
	\KL(Q \| \Pi(\,\cdot\,|\bs x,\bs y)) = \log \int p_f^{(n)}(\bs x,\bs y) \,d\Pi(f) - \ELBO(Q),
\]
where the `evidence lower bound' function is defined as
\begin{equation}\label{4:e:ELBO}
\ELBO(Q) := \int \log p_f^{(n)}(\bs x,\bs y) \,dQ(f) - \KL(Q \| \Pi).
\end{equation}
Hence, as is well known, minimizing the KL-divergence is equivalent with maximizing the ELBO function. For each hyperparameter value $\lambda$, this results in an approximation $\tilde\Pi_\lambda(\,\cdot\,|\bs x,\bs y)$ of the hierarchical posterior $\Pi(\,\cdot\,|\bs x,\bs y)$. We choose among these approximations the one which minimizes the KL-divergence or equivalently maximizes the ELBO, i.e., we estimate $\lambda$ as maximiser of
\begin{equation}
\lambda \mapsto \ELBO(\tilde\Pi_\lambda(\,\cdot\,|\bs x,\bs y)) = \max_{Q \in \mathcal Q_\lambda} \ELBO(Q).\label{def:ELBO:func}
\end{equation}

For computational reasons and to keep the presentation of the results clean, we optimize the hyperparameter $\lambda$ only over a discrete, finite index set $\Lambda_n$. As is made explicit in the notation, we allow for this set to change or grow with the number of observations. Our results may be extended to the case of non-discrete hyperparameters, e.g. continuous hyper-priors $\pi$, using the idea of likelihood transformation in \cite{donnet2018} and \cite{rousseau2017}. 

To summarise, the variational approximation to the hierarchical posterior is the distribution
\begin{equation}\label{def:vb:posterior}
\tilde\Pi_{\lambdavar}(\,\cdot\, | \bs x,\bs y) = \tilde\Pi_\lambda(\,\cdot\, | \bs x,\bs y)\Big|_{\lambda=\lambdavar} = \arg\max\nolimits_{Q \in \bigcup_\lambda \mathcal Q_\lambda} \ELBO(Q),
\end{equation}
where
\begin{equation}\label{e:lambda.hat.var}
	\lambdavar
	= \arg\max_{\lambda\in\Lambda_n} \ELBO(\tilde\Pi_\lambda(\,\cdot\, | \bs x,\bs y)).
\end{equation}

An alternative objective function, proposed in e.g. \cite{titsias2009a} and \cite{hensman2013}, is the evidence lower bound that does not involve the hierarchical prior $\Pi$, but $\Pi_\lambda$ with fixed hyperparameter $\lambda$,
\[ \ELBO_\lambda(Q) := \int \log p_f^{(n)}(\bs x,\bs y) \,dQ(f) - \KL(Q \| \Pi_\lambda). \]
Maximizing the above function over $Q \in \mathcal Q_\lambda$ is equivalent to minimizing the Kullback-Leibler divergence between the variational class $\mathcal Q_\lambda$ and the posterior $\Pi_\lambda(\,\cdot\,|\bs x,\bs y)$, i.e.
\[ \max_{Q \in \mathcal Q_\lambda} \ELBO_\lambda(Q) = \min_{Q \in \mathcal Q_\lambda} \KL(Q \| \Pi_\lambda(\,\cdot\, | \bs x,\bs y)). \]
This objective can be maximised both over $Q$ and $\lambda$, but since the objective function changes with $\lambda$, the two steps cannot be merged into one as before in \eqref{def:vb:posterior}. However, if the prior distributions $\Pi_\lambda$, $\lambda\in\Lambda_n$ are all mutually singular, there is a simple relationship between the two ELBO objective functions, i.e.
\begin{equation}\label{e:ELBO.rel}
\ELBO(Q) = \ELBO_\lambda(Q) + \log \pi(\lambda).
\end{equation}
As a consequence the variational posterior $\tilde\Pi_{\lambdavar}(\,\cdot\,|\bs x,\bs y)$ can also be obtained by maximising $\ELBO_{\lambda}$ penalized by the log-hyper-prior density. In case the hyper-prior is the uniform distribution on $\Lambda_n$, the two ELBOs are constant shifts of each other.

We argue, however, that this $\ELBO_\lambda$ function is less natural for selecting the hyperparameter than the previous $\ELBO$ function in \eqref{def:ELBO:func}: note that
\begin{equation}\label{e:ELBO.lambda}
\KL(Q \| \Pi_\lambda(\,\cdot\,|\bs x,\bs y)) = \int \log p_f^{(n)}(\bs x,\bs y) \,d\Pi_\lambda(f) - \ELBO_\lambda(Q).
\end{equation}
Therefore, maximizing the evidence lower bound $ \ELBO_\lambda(Q)$ over the hyperparameter $\lambda$ is not equivalent to minimizing the  KL-divergence due to the dependence of the log-evidence term in \eqref{e:ELBO.lambda} on $\lambda$. Nevertheless, the relation \eqref{e:ELBO.rel} shows that the two ELBO-optimisations over $\lambda$ are equivalent if in addition to the mutual singularity the hyper-prior is a uniform distribution. Any other choice of the hyper-prior $\pi$ gives the maximiser of \eqref{e:ELBO.rel} the interpretation of a regularised version of the maximiser of $\ELBO_\lambda$. See also Section 4 in \cite{zhang2020}, who start from the evidence lower bound given in \eqref{e:ELBO.rel}.

\section{Oracle rate with variational Gaussian processes}\label{c:adaptive.rates}

In this section we derive oracle contraction rates for the variational approximation of Gaussian process mixtures under general conditions. Then, in the next section we provide examples where the variational Bayes approach achieves the minimax adaptive contraction rates for various choices of GP priors, based on these general results. Since the variational approximation aims to mimic the behaviour of the hierarchical posterior, we start by deriving theoretical guarantees for this fully Bayesian approach. Since, to deal with the variational approach, we need tighter control on the tail behaviour of the hyper-posterior $\pi(\,\lambda\,| \bs x, \bs y)$, we do not follow standard techniques (as in e.g. \cite{jonge2010, arbel2013}), but use an argument via empirical Bayes, as developed in \cite{szabo2013, rousseau2017}. 

Since our focus is on GP approximations, for computational and analytical convenience we tailor our conditions to centered GP priors $\Pi_\lambda$ with covariance kernel $k_\lambda(\cdot,\cdot)$ and discrete hyper-priors $\pi$ on $\lambda$, i.e. we study mixtures of the form
\[
\Pi = \sum_{\lambda \in \Lambda_n} \pi(\lambda) \Pi_\lambda.
\]
For each value of the hyperparameter $\lambda$ we associate to the conditional prior $\Pi_\lambda$ a rate $\eps_n(\lambda)$ defined as the solution to
\begin{equation}\label{e:eps.lambda}
	\Pi_\lambda(f \in L^2(\mathcal X,G) : \|f - f_0\| \leq K\eps_n(\lambda))
	= \exp(-n\eps_n^2(\lambda)),
\end{equation}
where $K$ is a large positive constant to be specified later. Under some mild additional assumptions, one can show that  $\eps_n(\lambda)$ is the contraction rate of the posterior $\Pi_\lambda(\,\cdot\,|\bs x,\bs y)$ (this follows from the theory below applied to $\Lambda_n = \{\lambda\}$). Under the conditions below we prove that the hierarchical posterior distribution contracts at the rate
\[ \eps_{n,0} := \delta_n \vee c_n \min_{\lambda\in\Lambda_n} \eps_n(\lambda), \]
where $c_n$ is a sequence tending to infinity arbitrarily slowly and $\delta_n$ is a threshold which ensures $\eps_{n,0}$ is bounded from below. One can view this rate as the oracle one, i.e. the best rate attained by any of the models/priors. In the next section we provide several examples where this oracle rate is minimax adaptive over a scale of Sobolev regularity classes. 

The contraction rate of the posterior distribution is measured in the norm $\|\cdot\|$ of $L^2(\mathcal X,G)$, but in the proof we also use a standard testing argument, built on the empirical norm $\|g\|_n = (n^{-1} \sum_{i=1}^n g(x_i)^2)^{1/2}$. In order to compare these two, we introduce the following regularity condition on the Gaussian process prior. Consider for any $\lambda$ the Karhunen-Loève expansion of the Gaussian process with law $\Pi_\lambda$,
\begin{equation}\label{def:GP:prior}
\sum_{j=1}^{\infty} s_j^\lambda Z_j \varphi_j,\qquad Z_j\overset{\mathrm{iid}}{\sim}\mathcal N(0,1),
\end{equation}
where $(\varphi_j)_{j\in\mathbb{N}}$ denotes the eigenbasis of the covariance kernel that defines $\Pi_\lambda$, and $(s_j^\lambda)_{j\in\mathbb{N}}$ are the square roots of the eigenvalues. We require that the suprema of the eigenfunctions increase at most at a polynomial rate. That is, we assume there exist $\gamma\geq0$ and $C_\varphi>0$ such that
\begin{equation}\label{e:eigBound}
\|\varphi_j \|_\infty \leq C_\varphi j^{\gamma/d} \text{ for all } j \in \bb N.
\end{equation}
This condition is used together with a tail bound for the prior given below. Note that \eqref{e:eigBound} is a mild assumption which is satisfied for example by the standard Fourier basis with $\gamma=0$. For a general kernel, the eigenfunctions may depend on the hyperparameter(s), in which case the results below still hold if the inequality \eqref{e:eigBound} is valid uniformly for all $\lambda$ in $\Lambda_n$. 

Given a function $g = \sum_j \ip{g,\varphi_j} \varphi_j \in L^2(\mathcal X, G)$, we denote its tail (in the spectral domain) by
\[
	g^{>J} := \sum_{j > J} \ip{g,\varphi_j}\varphi_j.
\]
For $J_\gamma = (n/(\log n)^2)^{d/(d+2\gamma)}$ with $\gamma$ as in condition~\eqref{e:eigBound}, suppose that
\begin{equation}\label{e:f0.tail}
\|f_0^{>J_\gamma}\|_\infty \leq \delta_n.
\end{equation}
For the prior we require a similar tail condition:
\begin{equation}\label{e:priortail}
	\max_{\lambda\in\Lambda_n} \Pi_\lambda(\|f^{>J_\gamma}\|_\infty > \delta_n) \leq e^{-n\eps_{n,0}^2}.
\end{equation}
These assumptions are used to compare the empirical norm $\|g\|_n = (n^{-1}\sum_{i=1}^n g(x_i)^2)^{1/2}$ used for testing with the $L^2$ norm $\|g\| = (\int g^2 d G)^{1/2}$ on $L^2(\mathcal X,G)$ in which we measure the contraction rate (recall the definition of $\eps_n(\lambda)$ in \eqref{e:eps.lambda}). Loosely speaking, we compare the two norms for functions projected onto the basis $\varphi_1,\ldots,\varphi_{J_\gamma}$ and handle the tail behaviour with the above upper bounds. We verify these assumptions in several examples of regularity classes and priors under some conditions depending on the parameter $\gamma$.

Furthermore, we assume that there exists a function $c:(0,1)\mapsto \mathbb{R}$ such that for any $\lambda \in \bigcup_n \Lambda_n$, $\xi\in(0,1)$ and all sufficiently small $\eps>0$,
\begin{equation}\label{e:prior.reg.sc}
- \log \Pi_\lambda(\|f\| \leq \xi \eps) \leq - c(\xi) \log \Pi_\lambda(\|f\| \leq \eps).
\end{equation} 
This condition is used to refine the metric entropy bound implied by \eqref{e:eps.lambda} in order to construct hypothesis tests whose error is sufficiently small.

Lastly, we impose a condition on the hyper-prior $\pi$. We define the subset of `good' hyperparameters as
\begin{equation}\label{e:Lambda0}
	\Lambda_{n,0} := \{ \lambda \in \Lambda_n : \eps_n(\lambda) \leq \eps_{n,0} \},
\end{equation}
and assume that the hyper-prior puts sufficient amount of mass on each element of this set, i.e.
\begin{equation}\label{e:hyper.mass}
	-\log \Big(\min_{\lambda \in \Lambda_{n,0}} \pi(\lambda)\Big) = o(n\delta_n^2).
\end{equation}
The above condition on the hyper-prior is natural, and moreover very mild, as it only requires that at least an exponentially small mass is put on the elements $\lambda \in \Lambda_{n,0}$. It is satisfied for a wide range of distributions, including the uniform distribution on a set $\Lambda_n$ that does not grow too fast with $n$. 
Furthermore, we denote by $\bb E_{f_0}$ the expectation of the data $\bs x,\bs y$ in the model \eqref{e:np.reg}. We are now in a position to present the main result on contraction for the mixture posterior.

\begin{theorem}\label{t:hbp.contr}
Suppose that $\min_{\lambda\in\Lambda_n} n\eps_n^2(\lambda) \to \infty$ and $\log |\Lambda_n| = o(n\delta_n^2)$. Then under conditions~\eqref{e:eigBound}, \eqref{e:f0.tail}, \eqref{e:priortail}, \eqref{e:prior.reg.sc} and \eqref{e:hyper.mass}, there exist a constant $M>0$ and an event $\mathcal A_n$ with $\bb P_{f_0}^{(n)}(\mathcal{A}_n^c)=o(1)$ such that
\[
	\bb E_{f_0} \Pi (\|f-f_0\| > M \eps_{n,0} | \bs x, \bs y) \bs 1_{\mathcal{A}_n}
	\lesssim e^{-n\eps_{n,0}^2/3}.
\]
\end{theorem}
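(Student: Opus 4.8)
\emph{Proof strategy.} The plan is to run an empirical-Bayes type argument in the spirit of \cite{szabo2013,rousseau2017} rather than the usual prior-mass/sieve/test recipe of \cite{jonge2010}: since $|\Lambda_n|$ is controlled only through $\log|\Lambda_n|=o(n\eps_{n,0}^2)$, we cannot afford to union-bound the standard evidence lower bounds, which fail only with polynomially small probability. Writing the posterior mass of $\{\|f-f_0\|>M\eps_{n,0}\}$ as the usual ratio with denominator $\int p_f^{(n)}/p_{f_0}^{(n)}\,d\Pi$ and numerator the same integral restricted to that set, the first step is to fix an oracle $\lambda_0\in\Lambda_{n,0}$ with $\eps_n(\lambda_0)=\min_{\lambda\in\Lambda_n}\eps_n(\lambda)$. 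Then $n\eps_n^2(\lambda_0)\le n\eps_{n,0}^2/c_n^2=o(n\eps_{n,0}^2)$ and, by \eqref{e:hyper.mass} together with $\delta_n\le\eps_{n,0}$, $-\log\pi(\lambda_0)=o(n\eps_{n,0}^2)$; a standard Gaussian-regression evidence lower bound relating the $L^2(G)$ small-ball mass in \eqref{e:eps.lambda} to the marginal likelihood then gives $\int p_f^{(n)}/p_{f_0}^{(n)}\,d\Pi\ge\pi(\lambda_0)\int p_f^{(n)}/p_{f_0}^{(n)}\,d\Pi_{\lambda_0}\ge e^{-o(n\eps_{n,0}^2)}$ on an event of probability $1-o(1)$. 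This is the only place a merely polynomially small event is used; all other events below will be exponentially small, so their union over $\Lambda_n$ is harmless.

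\emph{Norm comparison and good hyperparameters.} Decomposing $f-f_0$ into its projection onto $\Span\{\varphi_1,\dots,\varphi_{J_\gamma}\}$ and its spectral tail, the choice $J_\gamma=(n/(\log n)^2)^{d/(d+2\gamma)}$ makes $J_\gamma^{1+2\gamma/d}/n=(\log n)^{-2}\to0$, so a matrix Bernstein bound and \eqref{e:eigBound} give $\|g\|_n\asymp\|g\|$ uniformly on that finite-dimensional span with probability $1-o(1)$, while \eqref{e:f0.tail}, \eqref{e:priortail} bound the tails of $f_0$ and of $f$ (the latter off a set of $\Pi_\lambda$-mass $\le e^{-n\eps_{n,0}^2}$); hence $\{\|f-f_0\|>M\eps_{n,0}\}$ lies, up to a $\Pi_\lambda$-negligible set, inside $\{\|f-f_0\|_n>m\eps_{n,0}\}$ for a suitable $m$. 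For $\lambda\in\Lambda_{n,0}$ one has $\eps_n(\lambda)\le\eps_{n,0}$, so the Gaussian concentration function yields a sieve $\mathcal F_{n,\lambda}$ with $\Pi_\lambda(\mathcal F_{n,\lambda}^c)\le e^{-Ln\eps_{n,0}^2}$ and $\log N(\eps_{n,0},\mathcal F_{n,\lambda},\|\cdot\|)\le b\,n\eps_{n,0}^2$ for arbitrarily large $L$ and fixed $b$ (this is where \eqref{e:prior.reg.sc} enters, fixing the small-ball exponent at scale $\eps_{n,0}$), together with tests $\psi_{n,\lambda}$ separating $f_0$ from $\mathcal F_{n,\lambda}\cap\{\|f-f_0\|_n>m\eps_{n,0}\}$ with both errors $\le e^{-Ln\eps_{n,0}^2}$. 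Taking $\psi_n=\max_{\lambda\in\Lambda_{n,0}}\psi_{n,\lambda}$ and bounding $\bb E_{f_0}(1-\psi_n)\int_{\|f-f_0\|>M\eps_{n,0}}p_f^{(n)}/p_{f_0}^{(n)}\,d\Pi_\lambda$ by the type-II error plus $\Pi_\lambda(\mathcal F_{n,\lambda}^c)$ gives $\bb E_{f_0}(1-\psi_n)\sum_{\lambda\in\Lambda_{n,0}}\pi(\lambda)\int_{\|f-f_0\|>M\eps_{n,0}}p_f^{(n)}/p_{f_0}^{(n)}\,d\Pi_\lambda\lesssim e^{-Ln\eps_{n,0}^2}$.

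\emph{Bad hyperparameters.} For $\lambda\notin\Lambda_{n,0}$ we have $\eps_n(\lambda)>\eps_{n,0}$ and $M\eps_{n,0}$ may lie far below $\eps_n(\lambda)$, so $\Pi_\lambda(\,\cdot\,|\bs x,\bs y)$ need not contract at the desired rate; instead I would bound the whole marginal likelihood $\int p_f^{(n)}/p_{f_0}^{(n)}\,d\Pi_\lambda$, which via $\sum_{\lambda\notin\Lambda_{n,0}}\pi(\lambda)\int p_f^{(n)}/p_{f_0}^{(n)}\,d\Pi_\lambda$ controls the hyper-posterior mass on bad $\lambda$ directly (using $\Pi_\lambda(\,\cdot\,|\bs x,\bs y)\le1$). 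Split at radius $K\eps_n(\lambda)$, with $K$ in \eqref{e:eps.lambda} taken large: on $\{\|f-f_0\|\le K\eps_n(\lambda)\}$ the prior mass equals $e^{-n\eps_n^2(\lambda)}$, so a Markov step bounds the integral there by $e^{-n\eps_n^2(\lambda)/2}$ off an event of probability $\le e^{-n\eps_n^2(\lambda)/2}$; on $\{\|f-f_0\|>K\eps_n(\lambda)\}\cap\mathcal F_{n,\lambda}$ a peeling argument over $\|\cdot\|_n$-shells, combined with the norm comparison above, Dudley's bound and Borell--TIS, shows the Gaussian fluctuation $\sigma^{-2}\sum_i\varepsilon_i(f-f_0)(x_i)$ is dominated shell-by-shell by $\tfrac n{2\sigma^2}\|f-f_0\|_n^2$ for $K$ large, so the integral there is $\le e^{-cn\eps_n^2(\lambda)}$; the remainder $\int_{\mathcal F_{n,\lambda}^c}$ is handled by Markov once more. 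Intersecting these (exponentially likely) events over $\lambda\notin\Lambda_{n,0}$ and using $\eps_n(\lambda)>\eps_{n,0}$ there, on that event $\sum_{\lambda\notin\Lambda_{n,0}}\pi(\lambda)\int p_f^{(n)}/p_{f_0}^{(n)}\,d\Pi_\lambda\le e^{-cn\eps_{n,0}^2}$.

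\emph{Conclusion and main obstacle.} Let $\mathcal A_n$ be the intersection of all the above events together with $\{\psi_n=0\}$; then $\bb P_{f_0}^{(n)}(\mathcal A_n^c)=o(1)$. On $\mathcal A_n$ the denominator is $\ge e^{-o(n\eps_{n,0}^2)}$ while, in expectation, the numerator is $\le e^{-Ln\eps_{n,0}^2}+e^{-cn\eps_{n,0}^2}$ (good $\lambda$ via the tests and sieve complements, bad $\lambda$ via the marginal-likelihood bound), so choosing $M,K,L$ large enough that $\min(L,c)>1/3$ yields $\bb E_{f_0}\Pi(\|f-f_0\|>M\eps_{n,0}|\bs x,\bs y)\bs 1_{\mathcal A_n}\lesssim e^{-n\eps_{n,0}^2/3}$. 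The delicate point is the bad-hyperparameter step: the marginal likelihoods $\int p_f^{(n)}/p_{f_0}^{(n)}\,d\Pi_\lambda$ must be controlled with \emph{exponential}-in-$n\eps_n^2(\lambda)$ probability uniformly over a $\Lambda_n$ that may grow faster than any polynomial, which is precisely what the empirical-Bayes route buys and what the cruder Chebyshev bounds of the single-prior theory cannot deliver.
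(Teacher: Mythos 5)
Your proposal follows the same overall architecture as the paper's proof: write the posterior probability as a likelihood ratio, lower-bound the denominator via the prior mass at the oracle $\lambda_0$ (using \eqref{e:eps.lambda} and \eqref{e:hyper.mass}), and split the numerator into good hyperparameters $\Lambda_{n,0}$ (handled by a sieve-and-test argument on the restricted posterior) and bad hyperparameters $\Lambda_n\setminus\Lambda_{n,0}$ (handled by bounding the entire marginal likelihood $\int p_f^{(n)}/p_{f_0}^{(n)}\,d\Pi_\lambda$). The paper carries out exactly this same decomposition --- display \eqref{e:hp.bound} is literally the good/bad split, \eqref{e:elb.lambda0} is the denominator lower bound, and \eqref{e:eub.bad.lambda} is the bad-$\lambda$ marginal-likelihood bound --- but organizes it through two intermediaries: Lemma~\ref{l:mmleSet}, which shows the MMLE lands in $\Lambda_{n,0}$ with high probability, and Theorem~\ref{t:ebp.contr} on the empirical Bayes posterior. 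That packaging is cosmetic; the substance is the same.

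The one genuine technical divergence is how you control the marginal likelihood at bad $\lambda$ on the shells $\{\|f-f_0\|>K\eps_n(\lambda)\}\cap\mathcal F_{n,\lambda}$: you propose a peeling argument using the Dudley entropy integral and Borell--TIS to dominate the Gaussian fluctuation $\sigma^{-2}\sum_i\varepsilon_i(f-f_0)(x_i)$ by the quadratic drift $\tfrac{n}{2\sigma^2}\|f-f_0\|_n^2$, whereas the paper constructs a finite cover of $\mathcal F_n'(\lambda)$ at scale $\asymp\eps_n(\lambda)$ and applies the likelihood-ratio tests of Lemma~\ref{l:emp.test} to the centers. These are closely related --- a one-scale cover plus Gaussian tail bound at the centers plus Lipschitz slack is in fact a degenerate chaining argument, which is what the peeling route reduces to here --- but you should be careful invoking ``Dudley's bound'' unqualified. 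The sieve $\mathcal F_n(\lambda)=\xi K\eps_n(\lambda)\bb B_1 + D_n\bb H_{\lambda,1}$ has bounded entropy only at scales above $\asymp\eps_n(\lambda)$ (this is exactly what \eqref{e:lambdaEB} gives); below that scale the $L^2$-ball summand makes the covering numbers infinite, so the full Dudley integral diverges. A truncated chaining at resolution $\eps_n(\lambda)$, paying the trivial $\sqrt n\,\eps_n(\lambda)\cdot(\text{shell radius})$ for the unchained part, works and matches the drift; but without that truncation the step as stated would fail. The paper's test-and-cover argument sidesteps this entirely because it only uses the entropy at one scale. Finally, your norm-comparison step cites a matrix Bernstein bound; the paper uses a Rudelson-type bound \cite{rudelson1999}, which handles unbounded eigenfunctions via an expectation bound rather than tail bounds --- either delivers the event $\mathcal B_n$ in \eqref{e:Bn}, so this is an implementation detail.
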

Note that the above theorem implies that the hierarchical posterior contracts at the rate $\min_{\lambda\in\Lambda_n} \eps_n(\lambda) \vee \delta_n$ around the true parameter $f_0$, as the sequence $c_n \to \infty$ in the definition of $\eps_{n,0}$ is arbitrary slow. In fact, the result is slightly stronger as it derives an exponentially fast convergence on a large event $\mathcal{A}_n$. 

The proof of the theorem follows the lines of \cite{rousseau2017} adapted to our specific setting. We start by showing that the maximum marginal likelihood estimator (MMLE)
\begin{align}\label{def:MMLE}
\hat\lambda_n=\arg\max_{\lambda\in\Lambda_n} \int p_f^{(n)}(\bs x,\bs y)d\Pi_\lambda(f)
\end{align}
belongs to the set of optimal hyperparameters $\Lambda_{n,0}$ with probability tending to one. This implies as an ancillary result that the MMLE empirical Bayes posterior
\begin{align}
\Pi_{\hat\lambda_n}(\,\cdot\,|\bs x,\bs y)=\Pi_{\lambda}(\,\cdot\,|\bs x,\bs y)\Big|_{\lambda=\hat\lambda_n}\label{def:eb}
\end{align}
achieves the optimal oracle rate $\eps_{n,0}$; see Theorem \ref{t:ebp.contr} below. Furthermore, under the assumption that the set of optimal hyperparameters $\Lambda_{n,0}$ receives a sufficiently large prior mass \eqref{e:hyper.mass} the hierarchical posterior also attains the same oracle contraction rate $\eps_{n,0}$. The details are given in Section~\ref{proof:t:hbp.contr}.

Next we investigate the variational approximation of the hierarchical posterior given in \eqref{def:vb:posterior}.

\begin{theorem}\label{t:vbp.contr}
In addition to the conditions of Theorem~\ref{t:hbp.contr}, assume that there exists $\lambda_n \in\Lambda_{n,0}$ and $Q_{\lambda_n} \in \mathcal Q_{\lambda_n}$ such that $\eps_n(\lambda_n) = o(\eps_{n,0})$ and
\begin{equation}\label{e:vbcCondition}
	\bb E_{f_0} \KL(Q_{\lambda_n} \| \Pi_{\lambda_n}(\,\cdot\,|\bs x,\bs y)) = o(n\eps_{n,0}^2).
\end{equation}
Then there exists $M>0$ such that
\[
\bb E_{f_0} \tilde \Pi_{\hat\lambda_n^{\mathrm{var}}}(f : \|f-f_0\| \geq M \eps_{n,0} |\bs x,\bs y) \to 0.
\]
\end{theorem}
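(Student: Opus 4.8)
The plan is to bound the Kullback--Leibler divergence between $\tilde\Pi_{\lambdavar}(\,\cdot\,|\bs x,\bs y)$ and the hierarchical posterior $\Pi(\,\cdot\,|\bs x,\bs y)$, and then to import the contraction of the latter from Theorem~\ref{t:hbp.contr} via a change-of-measure inequality. Since $\KL(Q\|\Pi(\,\cdot\,|\bs x,\bs y))=\log\int p_f^{(n)}(\bs x,\bs y)\,d\Pi(f)-\ELBO(Q)$ and the first term does not depend on $Q$, the maximiser $\tilde\Pi_{\lambdavar}$ of $\ELBO$ over $\bigcup_\lambda\mathcal Q_\lambda$ is simultaneously the minimiser of $Q\mapsto\KL(Q\|\Pi(\,\cdot\,|\bs x,\bs y))$ over that set; since $Q_{\lambda_n}\in\mathcal Q_{\lambda_n}$, this gives $\KL(\tilde\Pi_{\lambdavar}\|\Pi(\,\cdot\,|\bs x,\bs y))\le\KL(Q_{\lambda_n}\|\Pi(\,\cdot\,|\bs x,\bs y))$ pointwise in the data, so it suffices to control the right-hand side in $\bb E_{f_0}$-mean.

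For the latter, I would use $d\Pi_{\lambda_n}/d\Pi\le\pi(\lambda_n)^{-1}$ (which follows from $\Pi\ge\pi(\lambda_n)\Pi_{\lambda_n}$) together with Bayes' rule \eqref{e:BayesRule} applied to $\Pi$ and to $\Pi_{\lambda_n}$ to obtain
\[
\KL\bigl(Q_{\lambda_n}\big\|\Pi(\,\cdot\,|\bs x,\bs y)\bigr)
\le\KL\bigl(Q_{\lambda_n}\big\|\Pi_{\lambda_n}(\,\cdot\,|\bs x,\bs y)\bigr)-\log\pi(\lambda_n)
+\log\frac{\int(p_g^{(n)}/p_{f_0}^{(n)})\,d\Pi(g)}{\int(p_g^{(n)}/p_{f_0}^{(n)})\,d\Pi_{\lambda_n}(g)}.
\]
Taking $\bb E_{f_0}$: the first term is $o(n\eps_{n,0}^2)$ by \eqref{e:vbcCondition}; since $\lambda_n\in\Lambda_{n,0}$, the second is at most $-\log\min_{\lambda\in\Lambda_{n,0}}\pi(\lambda)=o(n\delta_n^2)=o(n\eps_{n,0}^2)$ by \eqref{e:hyper.mass}; and for the third, Jensen's inequality together with $\bb E_{f_0}\int(p_g^{(n)}/p_{f_0}^{(n)})\,d\Pi(g)=1$ bounds the numerator's contribution by $0$, while restricting the denominator integral to $\{g:\|g-f_0\|\le K\eps_n(\lambda_n)\}$—which carries $\Pi_{\lambda_n}$-mass $e^{-n\eps_n^2(\lambda_n)}$ by \eqref{e:eps.lambda}—and using the exact identity $\bb E_{f_0}\log(p_{f_0}^{(n)}/p_g^{(n)})=\tfrac{n}{2\sigma^2}\|f_0-g\|^2$, the standard prior-mass (evidence lower bound) argument gives $\bb E_{f_0}\bigl[-\log\int(p_g^{(n)}/p_{f_0}^{(n)})\,d\Pi_{\lambda_n}(g)\bigr]\lesssim n\eps_n^2(\lambda_n)=o(n\eps_{n,0}^2)$, where the last equality uses $\eps_n(\lambda_n)=o(\eps_{n,0})$. (The evidence ratio being bounded below by $\pi(\lambda_n)$ guarantees that the relevant expectations are finite and may be split.) Hence $\bb E_{f_0}\KL(Q_{\lambda_n}\|\Pi(\,\cdot\,|\bs x,\bs y))=o(n\eps_{n,0}^2)$, and so $\bb E_{f_0}\KL(\tilde\Pi_{\lambdavar}\|\Pi(\,\cdot\,|\bs x,\bs y))=o(n\eps_{n,0}^2)$; in particular the latter is a.s.\ finite, whence $\tilde\Pi_{\lambdavar}\ll\Pi(\,\cdot\,|\bs x,\bs y)$.

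It remains to transfer the rate. Writing $M_1$ for the constant of Theorem~\ref{t:hbp.contr}, by that theorem and Markov's inequality there is an event $\mathcal B_n$ with $\bb P_{f_0}^{(n)}(\mathcal B_n^c)=o(1)$ on which $\Pi(\|f-f_0\|>M_1\eps_{n,0}\,|\,\bs x,\bs y)\le e^{-n\eps_{n,0}^2/6}$. Fix $M>M_1$. The change-of-measure (Donsker--Varadhan) inequality gives, for every event $B$ and every $Q\ll\Pi(\,\cdot\,|\bs x,\bs y)$, that $Q(B)\log\bigl(1/\Pi(B|\bs x,\bs y)\bigr)\le\KL(Q\|\Pi(\,\cdot\,|\bs x,\bs y))+\log 2$. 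Applying this with $Q=\tilde\Pi_{\lambdavar}$ and $B=\{\|f-f_0\|\ge M\eps_{n,0}\}$—so that $\Pi(B|\bs x,\bs y)\le e^{-n\eps_{n,0}^2/6}$ on $\mathcal B_n$—bounding $\tilde\Pi_{\lambdavar}(B)\le 1$ on $\mathcal B_n^c$, and taking $\bb E_{f_0}$, we obtain
\[
\bb E_{f_0}\tilde\Pi_{\lambdavar}\bigl(\|f-f_0\|\ge M\eps_{n,0}\,\big|\,\bs x,\bs y\bigr)
\le\bb P_{f_0}^{(n)}(\mathcal B_n^c)+\frac{6\bigl(\bb E_{f_0}\KL(\tilde\Pi_{\lambdavar}\|\Pi(\,\cdot\,|\bs x,\bs y))+\log 2\bigr)}{n\eps_{n,0}^2}=o(1),
\]
which is the assertion.

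The crux is the evidence-ratio term in the middle step: one has to lower-bound the marginal likelihood $\int(p_g^{(n)}/p_{f_0}^{(n)})\,d\Pi_{\lambda_n}(g)$ of the single model $\Pi_{\lambda_n}$ by the prior-mass argument—conveniently, the population Kullback--Leibler divergence in the regression model equals $\tfrac{n}{2\sigma^2}\|f_0-g\|^2$ exactly, so no comparison between $\|\cdot\|$ and $\|\cdot\|_n$ is needed for this particular estimate—and to keep every bound at the level of $\bb E_{f_0}$-expectations so that it meshes cleanly with the exponential-rate contraction supplied by Theorem~\ref{t:hbp.contr}. The remaining manipulations are routine.
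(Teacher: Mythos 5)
Your proof is correct and follows the same overall architecture as the paper's: bound the Kullback--Leibler divergence of $\tilde\Pi_{\lambdavar}$ to the hierarchical posterior by that of $Q_{\lambda_n}$, decompose via $d\Pi_{\lambda_n}(\,\cdot\,|\bs x,\bs y)/d\Pi(\,\cdot\,|\bs x,\bs y)\le\pi(\lambda_n|\bs x,\bs y)^{-1}$, and then transfer contraction from the mixture posterior. The two deviations are genuine but minor, and both are slight improvements in self-containedness. First, where the paper controls $-\log\pi(\lambda_n|\bs x,\bs y)$ only on the intersection of $\mathcal A_n$ with the high-probability evidence events of \eqref{e:lambda.n.elb}, you bound the expectation directly: Jensen gives $\bb E_{f_0}\log\int(p_g^{(n)}/p_{f_0}^{(n)})\,d\Pi(g)\le 0$, and the prior-mass argument (the Kullback--Leibler neighbourhood $B_n(f_0,\eps;2)$ of Lemma~10 in Ghosal--van der Vaart coincides exactly with $\{\|g-f_0\|\le K\eps_n(\lambda_n)\}$ in this model) gives $\bb E_{f_0}\bigl[-\log\int(p_g^{(n)}/p_{f_0}^{(n)})\,d\Pi_{\lambda_n}(g)\bigr]\lesssim n\eps_n^2(\lambda_n)$. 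This yields an unconditional $\bb E_{f_0}\KL(\tilde\Pi_{\lambdavar}\|\Pi(\,\cdot\,|\bs x,\bs y))=o(n\eps_{n,0}^2)$, which is a bit cleaner than the paper's conditional bound. Second, where the paper invokes Theorem~5 of Ray--Szab\'o for the transfer step, you derive the needed change-of-measure inequality $Q(B)\log(1/P(B))\le\KL(Q\|P)+\log 2$ from Donsker--Varadhan directly (with the correct threshold $t=\log(1/P(B))$), combined with Markov applied to the exponential bound from Theorem~\ref{t:hbp.contr}. Your parenthetical remark about the evidence ratio being bounded below by $\pi(\lambda_n)$, which ensures that the expectations may be split, is exactly the integrability point that needs to be observed when separating numerator and denominator, and you handle it correctly. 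The only cosmetic superfluity is introducing $M>M_1$; taking $M=M_1$ would do.
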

The theorem states that under Condition \eqref{e:vbcCondition}, the variational posterior with the empirical hyperparameter maximizing the ELBO function in \eqref{e:lambda.hat.var} has the same rate of contraction as the hierarchical posterior. As seen in the examples below, Condition \eqref{e:vbcCondition} is mild. It basically requires that there is at least one hyperparameter amongst the ''good ones'' in $\Lambda_{n,0}$ such that the variational class is sufficiently close in Kullback-Leibler divergence to the corresponding posterior distribution. The proof is deferred to Section~\ref{proof:t:vbp.contr}.

In the literature various, related conditions to \eqref{e:vbcCondition} were considered. Recalling \eqref{e:ELBO.lambda}, note that
\begin{align*}
& \bb E_{f_0} \KL(Q \| \Pi_\lambda(\,\cdot\, | \bs x,\bs y)) \\
& = \bb E_{f_0} \int \log \frac{p_f^{(n)}}{p_{f_0}^{(n)}}(\bs x,\bs y) \,d\Pi_\lambda(f) - \bb E_{f_0} \Big( \int \log \frac{p_f^{(n)}}{p_{f_0}^{(n)}}(\bs x,\bs y) \,dQ(f) - \KL(Q \| \Pi_\lambda) \Big) \\
& \leq \KL(Q \| \Pi_\lambda) + \int \KL(\bb P_{f_0}^{(n)} \| \bb P_f^{(n)}) \,dQ(f).
\end{align*}
Assuming that this upper bound is of the order $O(n\eps_{n,0}^2)$ is similar to e.g. condition (C4) in \cite{zhang2020} for variational posteriors and the assumptions in Theorem 2.4 of \cite{alquier2020} for tempered variational posteriors.

\section{Examples}\label{s:ex}

In this section we apply the main result on variational GP contraction to two specific variational Bayes methods. 

The data-generating true function $f_0$ is assumed to belong to a $\beta$-Sobolev type ball
\[ \mathcal S^\beta(L) := \{g \in L^2(\mathcal X,G) : {\textstyle \sum{}} j^{2\beta/d} \ip{g,\varphi_j}^2 \leq L^2 \}, \]
i.e., its Sobolev norm $\|f_0\|_\beta := (\sum j^{2\beta/d}\ip{f_0,\varphi_j}^2)^{1/2}$ is bounded by $L$. We call the variational posterior \textit{adaptive} if for $\beta\in B$, for some subset $B\subset\mathbb{R}_+$,
\[
\sup_{f_0 \in \mathcal S^\beta(L)} \bb E_{f_0} \tilde\Pi_{\hat\lambda_n^{\mathrm{var}}}(f : \|f-f_0\| \leq M_n n^{-\beta/(d+2\beta)} | \bs x,\bs y) \to 0
\]
as $n\to\infty$, where $M_n$ is an arbitrary sequence tending to infinity. We show through several examples below that various choices of priors and variational classes can result in adaptive contraction rates. First we consider the inducing variable methods introduced in \cite{titsias2009} for polynomially and exponentially decaying eigenvalues of the prior covariance kernel. Then we also study a mean-field approximation by taking a mixture of truncated GPs as the hierarchical prior and truncated, mean-field GPs as the variational class.

\subsection{Inducing variables approximations}
First we consider the inducing variable variational approximations for GP regression introduced by \cite{titsias2009}. The idea is to compress the information possessed by the posterior into $m$ variables $\bs u= (u_1,\ldots,u_m)$, which in turn will reduce the computational complexity of the method. The inducing variables are taken to be linear functionals of the parameter $f$, hence $\bs u$ possesses an $m$-dimensional Gaussian distribution and $f|\bs u$ is a Gaussian process. Then endowing $\bs u$ with a Gaussian distribution with mean $\mu$ and variance $\Sigma$ and integrating out the conditional distribution $f|\bs u$ results in a class of Gaussian processes indexed by $\mu\in\mathbb{R}^m$ and $\Sigma\in\mathbb{R}^{m\times m}$. The mean and covariance functions of the inducing GPs are of the form 
\begin{align*}
	x &\mapsto K_{x\bs{u}}K_{\bs{uu}}^{-1}{\mu},\\
	(x,x') & \mapsto k_\lambda(x,x') - 
	K_{x\bs{u}}K_{\bs{uu}}^{-1}(K_{\bs{uu}}-\Sigma)K_{\bs{uu}}^{-1}K_{\bs{u}x'},
\end{align*}
where $k_\lambda(\cdot,\cdot)$ denotes the covariance kernel of the prior, $K_{x\bs{u}}\in \mathbb{R}^{m}$ and $K_{\bs{uu}} \in \bb R^{m\times m}$ the covariance matrices (implicitly also dependent on $\lambda$) corresponding to $f(x)$ and the vector $\bs u\in\mathbb{R}^m$. We use this class of GPs as our variational class, i.e. we take
\begin{equation}\label{e:varClass.ind}
\mathcal Q_\lambda = \Big\{ \int \Pi_\lambda(\,\cdot\,| \bs u) \, d \mathcal N_{\mu,\Sigma}(\bs u): \mu\in\mathbb{R}^m, \Sigma\in\mathbb{R}^{m\times m} \Big\},
\end{equation}
where $\mathcal N_{\mu,\Sigma}$ denotes the Gaussian distribution with mean $\mu$ and covariance $\Sigma$.

Various choices of inducing variables can be considered. Here we focus on the population and empirical spectral features inducing variables. In the  \textit{population spectral features} method we take $u_j = \ip{f,\varphi_j}$, $j=1,\ldots,m$ as inducing variables, where $\varphi_j$ denotes the $j$th eigenfunction of the prior covariance kernel $k_\lambda(\cdot,\cdot)$. The computational complexity of this approach is $O(m^2n)$, substantially reducing the computational time. However, this approach requires the explicit knowledge of the eigenfunctions of the prior GPs, which are often not available analytically. The empirical version of this method is called the \textit{sample spectral features} variational method, where the inducing variables are defined as $u_j = \bs v_j^{\text T} f(\bs x)$, where $\bs v_j$ is the $j$-th principal vector of the prior covariance matrix with entries $k_\lambda(x_i,x_j)_{i,j=1,\ldots,n}$. One can see that the covariance kernel is replaced with the sample covariance matrix and the eigenfunctions with the eigenvectors in this approach. This requires computation of the first $m$ principal components $\bs v_j$ and the corresponding eigenvalues, which makes the procedure slower. But it is also more widely applicable than the population features, because it does not require an explicit expression for the eigenfunctions $\varphi_j$. In practice the $\bs v_j$ are computed via Lanczos iteration or conjugate gradient descent, see for instance \cite{gardner2018, wenger2022, stankewitz2024}.

For fixed $\lambda$, the variational posterior can be determined analytically in terms of the data and covariances $K_{\bs u \bs u}$, $K_{\bs u x_i}$, which can be computed exactly or numerically depending on the choice of inducing variables. For non-adaptive variational approximations the theoretical properties have been studied in \cite{burt2020, vbgp}.

Below we consider two of the arguably most standard eigenvalue structures, i.e. polynomially and exponentially decaying eigenvalues $(s_j^\lambda)_{j\in\mathbb{N}}$ for the GP priors.

\subsubsection{Polynomially decaying eigenvalues -- tuning the exponent}
First we investigate priors with polynomially decaying eigenvalues of the form
\begin{equation} \label{def:prior:poly}
	f \sim \sum_{j=1}^\infty s_j^\alpha Z_j \varphi_j, \qquad s_j^\alpha \asymp j^{-1/2-\alpha/d},
\end{equation}
where $Z_1,Z_2,\ldots$ are i.i.d. standard normal, $(\varphi_j)_{j \in \bb N}$ is an orthonormal basis satisfying condition \eqref{e:eigBound} for some $\gamma\geq 0$, $d$ is the dimension of the covariate vectors $x_i$, and $\lambda = \alpha > 0$ is the regularity of the prior. Several priors possess such eigenstructure, including the Mat\'ern kernels and the Riemann-Liouville processes (including integrated Brownian motions). Here the form of the prior  closely matches the structure of the smoothness class, and for $\alpha=\beta$ the original posterior $\Pi_\alpha(\,\cdot\,|\bs x,\bs y)$ contracts at the minimax rate $n^{-\beta/(d+2\beta)}$ around $f_0 \in \mathcal S^\beta(L)$. In \cite{vbgp} it was shown that the variational posterior $\tilde \Pi_{\alpha=\beta}(\,\cdot\, | \bs x,\bs y)$ using either population or empirical spectral features inducing variables, contracts at this rate (albeit with respect to the Hellinger distance) provided that the number of inducing variables exceeds $m_{n,\alpha} \geq n^{d/(d+2\alpha)}$. Here we extend these results to data driven tuning of the hyperparameter and derive rates
 with respect to the stronger $L^2$-norm instead of Hellinger distance. We consider the variational class in \eqref{e:varClass.ind} with $m_{n,\alpha}$ the smallest integer larger than $n^{d/(d+2\alpha)}$. 

We optimize the hyperparameter $\alpha$ over the set
\[ \Lambda_n := \{ \beta^- + k/\log n : k=0,1,\ldots \} \cap [\beta^-,\beta^+], \]
for fixed $\beta^+ > \beta^- > 0$. We assume that $f_0 \in \mathcal S^\beta(L)$ for a $\beta\in  [\beta^-,\beta^+]$. We consider a uniform discrete prior $\pi$ on $\Lambda_n$.

\begin{corollary}\label{cor:poly}
Let $\beta^- > d+2\gamma$, where $\gamma>0$ is as in condition \eqref{e:eigBound}, and assume that $f_0\in \mathcal S^\beta(L)$ for some $\beta\in  [\beta^-,\beta^+]$. Then the population and empirical spectral features inducing variable variational posteriors corresponding to the prior \eqref{def:prior:poly}  achieve the minimax contraction rate $n^{-\beta/(d+2\beta)}$, i.e.
\[
	\bb E_{f_0} \tilde\Pi_{\hat\alpha_n^{\mathrm{var}}}(\|f-f_0\| \geq M_n n^{-\beta/(d+2\beta)}
	| \bs x,\bs y) \to 0
\]
for an arbitrary sequence $M_n$ tending to infinity.
\end{corollary}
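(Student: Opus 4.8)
The plan is to instantiate Theorems~\ref{t:hbp.contr} and~\ref{t:vbp.contr} for the prior family $\{\Pi_\alpha\}$ in \eqref{def:prior:poly}, the grid $\Lambda_n$ with its uniform hyper-prior, and the inducing-variable class \eqref{e:varClass.ind}, and then read off the rate. Concretely I must (i) identify $\eps_n(\alpha)$ and the oracle rate $\eps_{n,0}$; (ii) verify the regularity and tail conditions \eqref{e:eigBound}, \eqref{e:f0.tail}, \eqref{e:priortail}, \eqref{e:prior.reg.sc}; (iii) verify the combinatorial/mass conditions $\log|\Lambda_n|=o(n\delta_n^2)$, $\min_\alpha n\eps_n^2(\alpha)\to\infty$ and \eqref{e:hyper.mass}; and (iv) exhibit a witness $(\alpha_n,Q_{\alpha_n})$ for \eqref{e:vbcCondition}. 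The strengthening from the Hellinger-type guarantees of \cite{vbgp} to the $L^2$ statement claimed here is automatic: it is built into the general theorems through the norm-comparison conditions \eqref{e:eigBound}--\eqref{e:priortail}, so the corollary's only task is to check hypotheses.

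\emph{Per-model rate.} Using the classical small-ball estimate $-\log\Pi_\alpha(\|f\|\le\eps)\asymp\eps^{-d/\alpha}$ for series priors with $s_j^\alpha\asymp j^{-1/2-\alpha/d}$, together with the decentering bound obtained by projecting $f_0\in\mathcal S^\beta(L)$ onto $\varphi_1,\dots,\varphi_N$, the concentration function of $\Pi_\alpha$ at $f_0$ is of order $\eps^{-d/\alpha}\vee\eps^{-(d+2\alpha-2\beta)_+/\beta}$; solving \eqref{e:eps.lambda} gives $\eps_n(\alpha)\asymp n^{-(\alpha\wedge\beta)/(d+2\alpha)}$ (no logarithmic factors). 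Hence $\alpha\mapsto\eps_n(\alpha)$ is, up to constants, minimised at $\alpha=\beta$ with value the minimax rate $n^{-\beta/(d+2\beta)}$; since the grid point $\alpha_n\in\Lambda_n$ nearest to $\beta$ satisfies $|\alpha_n-\beta|\le 1/\log n$ and $\alpha\mapsto(\alpha\wedge\beta)/(d+2\alpha)$ is Lipschitz there, $\eps_n(\alpha_n)\asymp n^{-\beta/(d+2\beta)}=\min_{\alpha\in\Lambda_n}\eps_n(\alpha)$, and the exponent of $n\eps_n^2(\alpha)=n^{1-2(\alpha\wedge\beta)/(d+2\alpha)}$ stays bounded away from $0$ on $[\beta^-,\beta^+]$, so $\min_\alpha n\eps_n^2(\alpha)\to\infty$. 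The same small-ball estimate yields \eqref{e:prior.reg.sc} with $c(\xi)\asymp\xi^{-d/\beta^-}$ uniformly over $\alpha\in[\beta^-,\beta^+]$, and \eqref{e:eigBound} is assumed.

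\emph{Threshold and tails.} Set $\delta_n:=(n/(\log n)^2)^{-(\beta^- -\gamma-d/2)/(d+2\gamma)}$, which depends only on known constants. The hypothesis $\beta^->d+2\gamma$ enters here twice. First, a Cauchy--Schwarz split of $f_0^{>J_\gamma}$ against \eqref{e:eigBound} gives $\|f_0^{>J_\gamma}\|_\infty\lesssim J_\gamma^{1/2-(\beta-\gamma)/d}\le\delta_n$ (using $\beta\ge\beta^-$), so \eqref{e:f0.tail} holds; moreover the exponent $(\beta^- -\gamma-d/2)/(d+2\gamma)$ is $\ge 1/2>\beta/(d+2\beta)$ precisely because $\beta^->d+2\gamma$, so $\delta_n\lesssim n^{-\beta/(d+2\beta)}$ and $\eps_{n,0}=\delta_n\vee c_n\min_\alpha\eps_n(\alpha)\lesssim c_n\,n^{-\beta/(d+2\beta)}$. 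Second, for the prior tail, $\|f^{>J_\gamma}\|_\infty$ is a Lipschitz functional of a Gaussian whose mean and standard deviation are $\lesssim J_\gamma^{1/2-(\alpha-\gamma)/d}$ and $\lesssim J_\gamma^{-(\alpha-\gamma)/d}$; since $\alpha\ge\beta^->d+2\gamma$ forces $2(\alpha-\gamma)>d+2\gamma$, Borell's inequality gives $\Pi_\alpha(\|f^{>J_\gamma}\|_\infty>\delta_n)\le\exp(-c\,\delta_n^2 J_\gamma^{2(\alpha-\gamma)/d})$, whose exponent is of strictly larger polynomial order in $n$ than $n\eps_{n,0}^2\asymp n^{d/(d+2\beta)}$, uniformly over the finitely many $\alpha\in\Lambda_n$; this is \eqref{e:priortail}. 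Finally $|\Lambda_n|\le(\beta^+-\beta^-)\log n+1$ and the uniform hyper-prior give $\log|\Lambda_n|\asymp -\log\min_\alpha\pi(\alpha)\asymp\log\log n=o(n\delta_n^2)$, so \eqref{e:hyper.mass} and the counting condition hold, and Theorem~\ref{t:hbp.contr} applies.

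\emph{Variational witness and conclusion.} For Theorem~\ref{t:vbp.contr} take $\alpha_n$ as above; then $\eps_n(\alpha_n)\asymp\min_\alpha\eps_n(\alpha)\le c_n^{-1}\eps_{n,0}$, so $\alpha_n\in\Lambda_{n,0}$ and $\eps_n(\alpha_n)=o(\eps_{n,0})$. For the variational distribution I would borrow the inducing-variable computations of \cite{vbgp} (cf.\ \cite{burt2020}): with $m_{n,\alpha_n}\ge n^{d/(d+2\alpha_n)}$ population or empirical spectral features there is $Q_{\alpha_n}\in\mathcal Q_{\alpha_n}$ (the member whose mean and covariance match the posterior restricted to the leading features) whose expected KL to $\Pi_{\alpha_n}(\,\cdot\,|\bs x,\bs y)$ is bounded, via the Titsias trace bound, by a constant multiple of $\bb E_{f_0}\sum_{i\le n}\bigl(k_{\alpha_n}(x_i,x_i)-\text{(feature-approximated variance at }x_i)\bigr)\asymp n\sum_{j>m_{n,\alpha_n}}(s_j^{\alpha_n})^2\asymp n^{d/(d+2\alpha_n)}\asymp n\eps_n^2(\alpha_n)$. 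Since $n\eps_{n,0}^2\ge c_n^2\,n\eps_n^2(\alpha_n)$, this is $o(n\eps_{n,0}^2)$, so \eqref{e:vbcCondition} holds; Theorem~\ref{t:vbp.contr} then gives $\bb E_{f_0}\tilde\Pi_{\hat\alpha_n^{\mathrm{var}}}(\|f-f_0\|\ge M\eps_{n,0}\mid\bs x,\bs y)\to 0$, and as $\eps_{n,0}\lesssim c_n n^{-\beta/(d+2\beta)}$ with $c_n\to\infty$ free to grow below any prescribed $M_n$, the claim follows. The main obstacle is this last KL bound for the \emph{empirical} spectral features, where the population eigenfunctions must be replaced by the random principal vectors of the kernel matrix; controlling the perturbation of the trace residual needs concentration of the empirical covariance operator, and the plan is to transport the relevant estimates from \cite{vbgp} and check they hold uniformly over the finitely many $\alpha\in\Lambda_n$ (the crude bound $\log|\Lambda_n|\asymp\log\log n$ rendering any union bound harmless).
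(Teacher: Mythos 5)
Your structural plan is correct (instantiate the general theorems, compute $\eps_n(\alpha)$, verify the conditions, exhibit a variational witness), but the choice of $\delta_n$ is a genuine error that breaks the hypotheses of Theorem~\ref{t:hbp.contr}. You set $\delta_n := (n/(\log n)^2)^{-a}$ with $a := (\beta^- -\gamma-d/2)/(d+2\gamma)$. Because $\beta^- > d+2\gamma$ forces $a>1/2$, this gives $n\delta_n^2 = n^{1-2a}(\log n)^{4a} \to 0$. But Theorem~\ref{t:hbp.contr} requires both $\log|\Lambda_n|=o(n\delta_n^2)$ and condition~\eqref{e:hyper.mass}, i.e.\ $n\delta_n^2 \gg \log\log n$; your line ``$\log\log n = o(n\delta_n^2)$'' is therefore false with this $\delta_n$, and the machinery cannot be invoked. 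The fix, and what the paper does, is to take $\delta_n = c_n n^{-\beta/(d+2\beta)}$. It is fine for $\delta_n$ to depend on the unknown $\beta$: it is a proof device for verifying the abstract conditions, not a tuning parameter of the algorithm. With this choice $n\delta_n^2 = c_n^2 n^{d/(d+2\beta)}\to\infty$ so the counting and mass conditions are trivial, the tail bound \eqref{e:f0.tail} still holds (your Cauchy--Schwarz estimate $\|f_0^{>J_\gamma}\|_\infty = o(J_\gamma^{(\gamma-\beta)/d+1/2})$ is $o(\delta_n)$ once $\beta\geq d+2\gamma$), and one ends with $\eps_{n,0}\asymp c_n n^{-\beta/(d+2\beta)}$ with $c_n$ arbitrarily slow, which is exactly the $M_n$ in the statement.

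Two smaller points. For \eqref{e:priortail} you replace the paper's Chernoff bound on $\sum_{j>J_\gamma} j^{\gamma/d-\alpha/d-1/2}|Z_j|$ with a Borell argument; that is a legitimate alternative, but you must also verify that $\bb E\|f^{>J_\gamma}\|_\infty = o(\delta_n)$, which becomes comfortable only after the correction to $\delta_n$. For condition~\eqref{e:vbcCondition} you flag the empirical (sample) spectral features as the harder case, but the roles are reversed in this paper: the sample-feature bounds from \cite{vbgp} transfer verbatim, whereas the population-feature bound (Lemma~5 there) assumes uniformly bounded eigenfunctions, which here is weakened to \eqref{e:eigBound} with $\gamma>0$; the paper therefore re-derives the operator-norm control via a Hoeffding concentration for the inner products $\bs\varphi_\ell^\T\bs\varphi_k$ together with a union bound.
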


The proof of the corollary is deferred to Section \ref{sec:cor:poly} in the Appendix.

As an example of these theoretical findings we consider a simulated data set of $n=10,000$ observations $(x_i,y_i)$ generated as 
\begin{equation}\label{e:sim.poly.data}
x_i \sim_{\text{i.i.d.}} \mathrm{unif}(0,2\pi) \text{ and } y_i | x_i \sim_{\text{ind.}} \mathcal N(f_0(x_i), 0.01),
\end{equation}
where
\begin{equation}\label{e:sim.poly.f0}
f_0(x) = \sum_{j=0}^\infty (3j+1)^{-1.1} \varphi_{3j+1}(x-\pi)
\end{equation}
and $\{\varphi_j\}_{j\in\mathbb{N}}$ form the real Fourier basis on $L^2(0,2\pi)$ rescaled to be orthonormal with respect to the uniform distribution. The function $f_0$ almost has Sobolev smoothness $0.6$ in the sense that $f_0 \in \mathcal S^\beta(L)$ for all $\beta < 0.6$. We take the prior defined in \eqref{def:prior:poly}, and using the ELBO with population spectral features we estimate $(\sigma^2,\lambda)$ by $(0.014, 1.006)$ and correspondingly take $m = 21$. In Figure~\ref{f:m21} we show $f_0$ and the mean and $95\%$ pointwise credible regions of the variational posterior with variationally tuned $\lambda=\alpha$. The credible regions nicely cover the true $f_0$ everywhere except for some of the points where it changes direction. 

\begin{figure}[h]\centering
\includegraphics{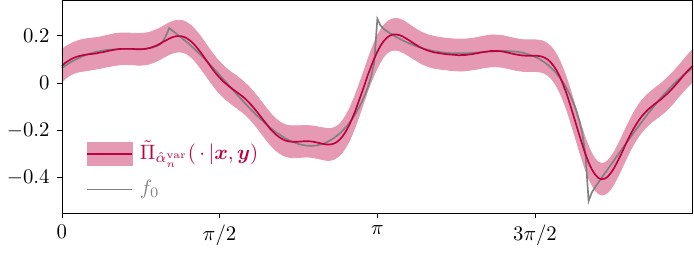}
\caption{Variational posterior with $m=21$ population features based on polynomially decaying series prior \eqref{def:prior:poly}. Data simulated as in \eqref{e:sim.poly.data} with $f_0$ given in \eqref{e:sim.poly.f0}. The shaded region indicates $95\%$ pointwise credible regions of the variational posterior.}
\label{f:m21}
\end{figure}

In Figure~\ref{f:m42} we repeat the procedure (with new data) but set $m = \lceil 2n^{1/(1+2\alpha)} \rceil$ in the variational class. Now $(\sigma^2,\lambda)$ has similar estimates $(0.013,1.005)$ implying that $m=42$. Generally we observe similar behaviour as before, but the credible regions are narrower. Based on this we conclude that the constant multiplier in the number of inducing variables can play a non-negligible role. In practice the exact choice of $m$ is delicate. On the one hand one should choose it as large as the computational resources allow. Increasing $m$ beyond a threshold effectively doesn't help anymore, but this threshold depends on certain hidden properties of the underlying signal. Our theoretical results provide an initial guide for choosing $m$. Then, in practice, if the computational resources allow, one can increase $m$ gradually until no significant changes can be observed. This approach combines the asymptotic results with the finite sample size behaviour, making the procedure more robust.

\begin{figure}[h]\centering
\includegraphics{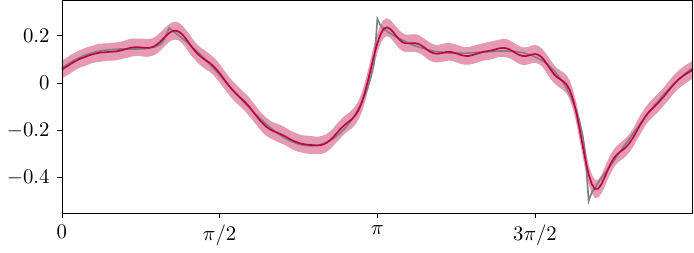}
\caption{Setting exactly as in Figure~\ref{f:m21}, but with $m=42$ features.}
\label{f:m42}
\end{figure}

\subsubsection{Exponentially decaying eigenvalues -- adaptive rescaling}

Now consider a series prior with eigenvalues that decay exponentially
\begin{equation}\label{e:expoPrior}
f \sim \sum_{j=1}^\infty s_j^\tau Z_j \varphi_j, \qquad \quad s_j^\tau \asymp (\tau^d \exp(-\tau j^{1/d}))^{1/2},
\end{equation}
where $Z_1,Z_2,\ldots$ are i.i.d. standard normal random variables and $(\varphi_j)_{j \in \bb N}$ is an orthonormal basis satisfying \eqref{e:eigBound}. A prominent example of such eigenvalue structure is the rescaled squared exponential covariance kernel. In this context, we estimate the scaling hyperparameter $\lambda = \tau$. If the true Sobolev smoothness of $f_0$ is $\beta$, then (up to logarithmic factors) the optimal rescaling $\tau$ is of the order $n^{-1/(d+2\beta)}$ and one should take at least $n^{d/(d+2\beta)}$ inducing variables in the approximation (see \cite{vbgp}). We study here the case where $\beta$ is not known, and estimate $\tau$ empirically by maximizing the ELBO over the set
\[ \Lambda_n = \{ e^{-j} : j=1,\ldots,\lfloor \tfrac{1}{d+2\beta^-} \log n \rfloor \}, \]
as in \eqref{e:lambda.hat.var}, where $\beta^-$ is a lower bound on the true Sobolev smoothness of $f_0$, which is assumed to satisfy $\beta^- > d+2\gamma$. As before, the inducing variables class \eqref{e:varClass.ind} is studied for both the empirical and population spectral features, with $m_{n,\tau} = \lceil \tau^{-d} (\log n)^{d+1} \rceil$ features. 

\begin{corollary}\label{cor:exp}
Suppose the prior basis $(\varphi_j)_{j \in \bb N}$ in \eqref{e:expoPrior} satisfies condition \eqref{e:eigBound} for some $\gamma>0$. If $f_0\in \mathcal S^\beta(L)$ for some $\beta \geq \beta^-$, then the population and empirical spectral features inducing variables variational posteriors corresponding to the prior \eqref{e:expoPrior} achieve the (near) minimax contraction rate $n^{-\beta/(d+2\beta)}$, i.e.
\[
	\bb E_{f_0} \tilde \Pi_{\hat\tau_n^{\mathrm{var}}}(\|f-f_0\| \geq M_n (n/\log n)^{-\beta/(d+2\beta)}
	| \bs x,\bs y) \to 0
\]
for an arbitrary sequence $M_n$ tending to infinity.
\end{corollary}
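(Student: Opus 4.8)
The plan is to deduce the corollary from Theorem~\ref{t:vbp.contr}, which requires checking the structural conditions \eqref{e:eigBound}--\eqref{e:hyper.mass} together with $\min_{\tau\in\Lambda_n}n\eps_n^2(\tau)\to\infty$ and $\log|\Lambda_n|=o(n\delta_n^2)$ for the mixture prior $\Pi=\sum_{\tau\in\Lambda_n}\pi(\tau)\Pi_\tau$ with uniform $\pi$, identifying the oracle rate $\eps_{n,0}$, and finally producing a hyperparameter $\tau_n\in\Lambda_{n,0}$ and a $Q_{\tau_n}\in\mathcal Q_{\tau_n}$ satisfying the Kullback--Leibler bound \eqref{e:vbcCondition}. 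The first task is to pin down $\eps_n(\tau)$ from \eqref{e:eps.lambda}. The eigenvalues $(s_j^\tau)^2\asymp\tau^d e^{-\tau j^{1/d}}$ are of the rescaled squared-exponential type, and the concentration-function estimates used for such priors (as in \cite{vbgp} and the references therein) give a centered small-ball exponent of order $\tau^{-d}(\log(1/\eps))^{1+d}$ and, for $f_0\in\mathcal S^\beta(L)$, a decentering term $\inf_{\|h-f_0\|\le\eps}\|h\|_{\bb H_\tau}^2\lesssim\tau^{-d}\exp(\tau(L/\eps)^{1/\beta})$, where $\|\cdot\|_{\bb H_\tau}$ is the RKHS norm of $\Pi_\tau$. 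Taking $\tau\asymp\eps^{1/\beta}$ (using $\beta\ge\beta^->d/2$) makes the decentering term negligible against the small-ball term, and solving \eqref{e:eps.lambda} for this $\tau$ gives $\eps_n(\tau)$ of the near-minimax order $(n/\log n)^{-\beta/(d+2\beta)}$ up to a polylogarithmic factor. Since $\beta\ge\beta^-$, the grid point $e^{-j}$ with $j=\lfloor\frac1{d+2\beta}\log n\rfloor$ belongs to $\Lambda_n$ and is of this order, so $\min_{\tau\in\Lambda_n}\eps_n(\tau)$ is the advertised rate up to logs; together with the bound on $\delta_n$ below this determines $\eps_{n,0}$, and the slowly varying factors are absorbed into the arbitrary sequence $M_n$ (choosing the sequence $c_n$ in $\eps_{n,0}$ to diverge slowly enough). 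The side requirement $\min_\tau n\eps_n^2(\tau)\to\infty$ holds because $n\eps_n^2(\tau)=-\log\Pi_\tau(\|f-f_0\|\le K\eps_n(\tau))$ is bounded below by a positive power of $n$ for every $\tau\in\Lambda_n$.

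Next I would verify the remaining structural conditions. For \eqref{e:f0.tail}, \eqref{e:eigBound} and Cauchy--Schwarz give $\|f_0^{>J}\|_\infty\le C_\varphi\big(\sum_{j>J}j^{2(\gamma-\beta)/d}\big)^{1/2}\|f_0\|_\beta\lesssim J^{1/2-(\beta-\gamma)/d}$, which is a negative power of $J$ because $\beta\ge\beta^->d+2\gamma$ forces $\beta-\gamma>d/2$; inserting $J=J_\gamma=(n/(\log n)^2)^{d/(d+2\gamma)}$ produces a negative power of $n$, so \eqref{e:f0.tail} holds with a polynomially small $\delta_n$, and the same inequality $\beta^->d+2\gamma$ ensures this $\delta_n$ is no larger than the target rate, so it does not dominate $\eps_{n,0}$. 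For the prior tail \eqref{e:priortail}, observe that for every $\tau\in\Lambda_n$ and $j>J_\gamma$ one has $\tau j^{1/d}\ge n^{c}(\log n)^{-c'}$ with $c=\tfrac1{d+2\gamma}-\tfrac1{d+2\beta^-}>0$, so that $s_j^\tau\|\varphi_j\|_\infty\le C_\varphi\tau^{d/2}j^{\gamma/d}e^{-\tau j^{1/d}/2}$ is super-polynomially small in $n$ uniformly over $\tau\in\Lambda_n$; hence $\bb E_{\Pi_\tau}\|f^{>J_\gamma}\|_\infty$ and $\sup_{j>J_\gamma}s_j^\tau\|\varphi_j\|_\infty$ are super-polynomially small, and the Borell--TIS inequality delivers \eqref{e:priortail} with enormous slack. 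The small-ball rescaling condition \eqref{e:prior.reg.sc} follows directly from the small-ball formula, since $\tau^{-d}(\log(1/\eps)+\log(1/\xi))^{1+d}\le2^{1+d}\tau^{-d}(\log(1/\eps))^{1+d}$ once $\eps$ is small relative to $\xi$, so $c(\xi)=2^{1+d}$ works uniformly in $\tau$. Finally $|\Lambda_n|\asymp\log n$, so for the uniform hyper-prior $-\log\min_\tau\pi(\tau)=\log|\Lambda_n|\asymp\log\log n=o(n\delta_n^2)$, giving both \eqref{e:hyper.mass} and $\log|\Lambda_n|=o(n\delta_n^2)$.

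For the variational step I would take $\tau_n=\arg\min_{\tau\in\Lambda_n}\eps_n(\tau)$, so $\tau_n\in\Lambda_{n,0}$ and $\eps_n(\tau_n)=o(\eps_{n,0})$ because $c_n\to\infty$, and let $Q_{\tau_n}$ be the best inducing-variable approximation in $\mathcal Q_{\tau_n}$ with $m_{n,\tau_n}=\lceil\tau_n^{-d}(\log n)^{d+1}\rceil$ features. For the population spectral features, \cite{vbgp} provides a bound of the form $\bb E_{f_0}\KL(Q_{\tau_n}\|\Pi_{\tau_n}(\cdot|\bs x,\bs y))\lesssim(n/\sigma^2)\sum_{j>m_{n,\tau_n}}(s_j^{\tau_n})^2$, and since $\tau_n m_{n,\tau_n}^{1/d}\asymp(\log n)^{(d+1)/d}$ grows faster than $\log n$, the tail sum decays faster than any polynomial in $n$; thus the right-hand side is $o(1)=o(n\eps_{n,0}^2)$ and \eqref{e:vbcCondition} holds with room to spare. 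For the empirical (sample) spectral features one must additionally control the deviation of the sample covariance eigenstructure from its population counterpart, which by the arguments of \cite{vbgp} costs at most an extra logarithmic factor in the KL bound and therefore still leaves \eqref{e:vbcCondition} satisfied. Theorem~\ref{t:vbp.contr} then gives the stated contraction of $\tilde\Pi_{\hat\tau_n^{\mathrm{var}}}$.

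I expect the main difficulty to lie in two places. The bookkeeping-heavy part is making the concentration-function estimates --- and, for the empirical features, the sample-versus-population eigenstructure comparison --- genuinely uniform in $\tau\in\bigcup_n\Lambda_n$, in particular as $\tau\to0$. The genuinely delicate point, however, is the simultaneous balancing in the first step: the grid point $\tau_n$ must be close enough to the continuum optimum $\tau\asymp n^{-1/(d+2\beta)}$ to attain the near-minimax rate, must keep $\eps_n(\tau_n)$ strictly of smaller order than $\eps_{n,0}$, and must make $m_{n,\tau_n}$ large enough for the slack in \eqref{e:vbcCondition} while small enough to preserve the computational saving; the hypothesis $\beta^->d+2\gamma$ is precisely what makes all of this compatible with requiring the thresholding error $\delta_n$, which arises from the comparison between $\|\cdot\|$ and the empirical norm $\|\cdot\|_n$, to stay below the target rate.
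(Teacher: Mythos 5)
Your overall strategy --- verify the conditions of Theorem~\ref{t:vbp.contr} for the exponential prior, identify the oracle rate, and bound the Kullback--Leibler divergence --- matches the paper's proof. However, there is a genuine gap in the oracle-rate computation. You propose taking $\tau\asymp\eps^{1/\beta}$, which forces the decentering term $\tau^{-d}\eps^2\exp(2\tau(L/\eps)^{1/\beta})$ to be $O(\tau^{-d}\eps^2)$; this is more conservative than necessary. Balancing the three bounds on the concentration function shows the optimal scaling is $\tau\asymp n^{-1/(d+2\beta)}(\log n)^{1+1/(d+2\beta)}$, which has an \emph{extra} polylogarithmic factor relative to your choice. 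With your smaller $\tau\asymp n^{-1/(d+2\beta)}$, the entropy/small-ball term $(n\tau^d)^{-1/2}(\log n)^{(d+1)/2}$ dominates and gives $\eps_n(\tau)\asymp n^{-\beta/(d+2\beta)}(\log n)^{(d+1)/2}$, which is worse than the target $(n/\log n)^{-\beta/(d+2\beta)}=n^{-\beta/(d+2\beta)}(\log n)^{\beta/(d+2\beta)}$ by a positive power of $\log n$ (since $\beta/(d+2\beta)<1/2<(d+1)/2$). This surplus cannot be absorbed into the sequence $M_n$: the corollary asserts the bound for \emph{every} $M_n\to\infty$, and the sequence $c_n$ in $\eps_{n,0}$ only needs to diverge and can be taken arbitrarily slowly, so neither can swallow a fixed power of $\log n$. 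The paper avoids this by first writing $\eps_n(\tau)\lesssim e^{\tau/2}/\sqrt{n\tau^d}\vee(\tau/\log n)^{\beta}\vee(n\tau^d)^{-1/2}(\log n)^{(d+1)/2}$ and then minimising the \emph{combined} bound over $\tau$, obtaining $\tau$ with the additional $(\log n)^{1+1/(d+2\beta)}$ factor (attained on the grid up to a factor $e$), which exactly balances the terms. A related minor inaccuracy: the bound from Lemma~3 of \cite{vbgp} is not merely $n\sum_{j>m}(s_j^\tau)^2$; it also contains a term proportional to $n\bar\eps_n^2(\lambda_n)$, so $\bb E_{f_0}\mathrm{KL}$ is of order $n\bar\eps_n^2(\lambda_n)$ rather than $o(1)$. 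Fortunately that is still $o(n\eps_{n,0}^2)$, so \eqref{e:vbcCondition} still holds, but your stated bound is not the one that actually comes out of the cited lemma.

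On the other hand, your treatment of the prior tail condition \eqref{e:priortail} is a legitimately different route from the paper. The paper shows that for $j\geq J_\gamma$ and all $\tau\in\Lambda_n$, the exponential weight $e^{-\tau j^{1/d}}$ is dominated by a polynomial weight $j^{-1-2\alpha/d}$ (via a linear approximation of the exponential at $J_\gamma^{1/d}/2$), and then defers to the Chernoff-style argument from Corollary~\ref{cor:poly}. You instead observe directly that $\tau J_\gamma^{1/d}\geq n^c$ uniformly on $\Lambda_n$ (both proofs agree on this), hence $\bb E\|f^{>J_\gamma}\|_\infty$ and $\sup_{j>J_\gamma}s_j^\tau\|\varphi_j\|_\infty$ are super-polynomially small, and Borell--TIS gives an overwhelmingly strong tail bound. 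This is cleaner and avoids reducing to the polynomial case; the paper's reduction has the advantage of recycling an argument already written out, but your direct approach is self-contained and somewhat more transparent.
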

The proof of the corollary is deferred to Section \ref{sec:cor:exp} in the Appendix.

\newpage 

\begin{figure}[h]\centering
\includegraphics{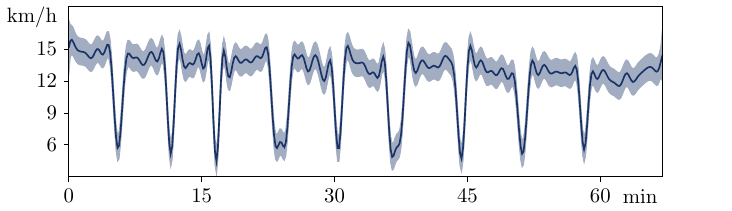}
\caption{Mean and 95\% credible region of the variational posterior for runner's speed based on the squared exponential prior \eqref{e:kernel.sq}. A variational approximation was used with $m=150$ sample spectral features, where hyperparameters and model variance were tuned using the evidence lower bound. Optimisation time: 286 sec.}
\label{f:running}
\end{figure}

~

\begin{figure}[h]\centering
\includegraphics{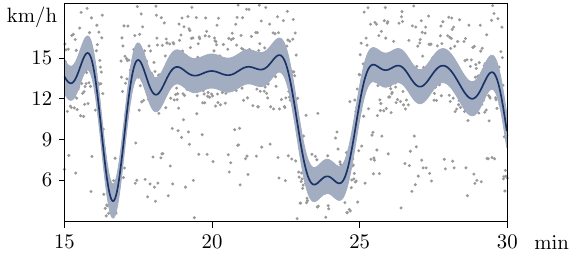}
\caption{Zoom of variational procedure in Figure~\ref{f:running} with plot of data points.}
\label{f:runningzoom}
\end{figure}

~

\begin{figure}[h]\centering
\includegraphics{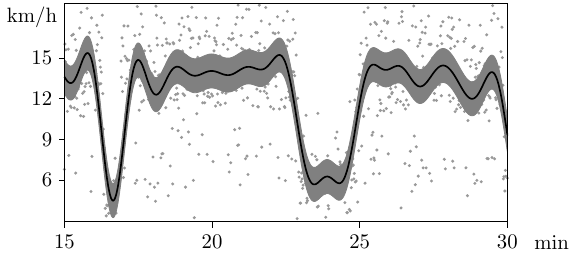}
\caption{Empirical Bayes posterior mean and point-wise $95\%$ credible region for the running data with squared exponential prior \eqref{e:kernel.sq}. Optimisation time: 817 sec.}
\label{f:empbayes}
\end{figure}

\newpage

We illustrate the results by applying the procedure to a set of $n=4022$ velocity measurements during a run\footnote{Code and data are available at \url{https://github.com/dennisnieman/adaptiveVB}}. A GPS tracker has observed the runner's coordinates every second during their interval training and the velocity in $\mathrm{km}/\mathrm{h}$ is estimated from the coordinate shifts. The measurements are smoothed using the squared exponential kernel
\begin{equation}\label{e:kernel.sq}
k(x,y) = \nu \cdot \exp(-(x-y)^2/\tau^2).
\end{equation}
Here $\nu$ and $\tau$ are respectively a vertical and a sample path scaling parameter. Since the eigenfunctions in the series expansion of this GP are not known explicitly, we apply the variational approximation with sample spectral features. Both the hyperparameters and the model variance $\sigma^2$ are estimated variationally by maximising the evidence lower bound. Although our theory is developed only for variational estimation of $\tau$, in practice we also obtain reasonable estimates for the other unknowns, which are comparable to empirical Bayes estimates.

In comparison with the preceding section, here it is not quite clear how $m$ should depend on the tuning parameters, since our theory only accounts for the scaling parameter $\tau$. Therefore we cautiously take large $m=150$. The sample spectral features were computed using an off-the-shelf eigendecomposition method, which computes the entire decomposition. This is somewhat inefficient, especially since every evaluation of the ELBO requires computation of a new decomposition. Yet already the procedure is almost three times faster than the empirical Bayes procedure. The estimates of $(\sigma, \nu, \tau)$ are $(3.88, 99.58, 59.24)$. From the data it becomes clear that the changes in speed are quite abrupt, which is reflected quite well in the variational posterior (see Figure~\ref{f:runningzoom}). In general, plain stationary GPs are not completely suited to detect jumps and varying local behaviours, and the fit could be improved with a model taking into account change-points.

We compare our variational approximation with the empirical Bayes approach in Figure~\ref{f:empbayes}. We estimate the hyperparameters $(\sigma, \nu, \tau)$  by maximizing the marginal likelihood function  $(\sigma,\nu,\tau) \mapsto \int \log p_f^{(n)}(\bs x,\bs y) \,d\Pi_\lambda (f)$, resulting in estimates $(3.88, 99.59, 59.17)$. By eye, the variational approximation can hardly be distinguished from the empirical Bayes posterior.

\subsection{Truncated series priors}

As third example, consider a prior of the form
\begin{equation}\label{e:dimprior}
f \sim D^{-1/2} \sum_{j=1}^D Z_j \varphi_j, \qquad Z_j \sim_{\text{i.i.d.}} \mathcal N(0,1),
\end{equation}
for a uniformly bounded basis $\varphi_1,\varphi_2,\ldots$ (that is, satisfying condition \eqref{e:eigBound} for $\gamma = 0$). We study a variational procedure for the tuning parameter $\lambda = D \in \bb N$.

Consider a uniform hyper-prior on the set
\[
	\Lambda_n = \{ i \in \bb N : i \leq \sqrt{n} \}.
\]
In a practical application, the optimisation is faster if the grid is coarser. For example, one might consider the subset of those integers in $\Lambda_n$ of the form $2^i$ where $i \in \bb N$. This may worsen the quality of the approximation, even though asymptotic results are unaffected by such a change (as long as a prior is chosen for $D$ that satisfies condition \eqref{e:hyper.mass}). 

For fixed $D$, the posterior distribution is a Gaussian process with respective mean and covariance function
\begin{align*}
x & \mapsto \varphi_{1:D}(x)^\T(\sigma^2 D I + \Phi^\T\Phi)^{-1}\Phi^\T \bs y, \\
(x,y) & \mapsto \varphi_{1:D}(x)^\T(D I + \sigma^{-2} \Phi^\T\Phi)^{-1} \varphi_{1:D}(y),
\end{align*}
where we abbreviate $\varphi_{1:D}(x) = (\varphi_1(x),\ldots,\varphi_D(x))$, and denote by $\Phi$ the $n \times D$ matrix whose $j$-th row is $\varphi_{1:D}(x_i)^\T$.

Note that the matrix $\Phi^\T\Phi$ has entries $\sum_{i=1}^n\phi_j(x_i)\phi_k(x_i)$ which can be approximated by $n\delta_{jk}$ using the Law of Large Numbers. Thus the posterior distribution is a $D$-dimensional process, and a posteriori the coefficients $\ip{f,\varphi_j}$ are nearly diagonal. This motivates the use of the variational class
\[ \mathcal Q_D = \{ \mathcal N(\sigma^{-2}\Sigma\Phi^\T \bs y, \Sigma) : \Sigma \in \bb R^{D\times D} \text{ positive definite and diagonal} \}, \]
here viewed as a distribution on the coefficients in the basis $\varphi_1,\ldots,\varphi_D$. The variational class respects the relationship between the mean and variance of the original posterior. It can also be used to approximate the posterior from a more general Gaussian series prior with coefficients $\ip{f,\varphi_j} = s_j^D Z_j$ instead of $D^{-1/2}Z_j$, where the $s_j^D$ have a decaying structure. For simplicity we restrict ourselves the truncated GP of the form \eqref{e:dimprior}. 

Denoting $\mu = \sigma^{-2}\Sigma\Phi^\T \bs y$, the variational distribution $\tilde\Pi_D(\,\cdot\,|\bs x,\bs y)$ is the maximiser of
\begin{align*}
	\ELBO(Q)
	& = \int \log p_f^{(n)}(\bs x,\bs y) \,dQ(f) - \KL(Q \| \Pi_D) + \log \pi(D) \\
	& = - \frac n2 \log(2\pi\sigma^2) - \frac{1}{2\sigma^2}\Big( \tr \Phi\Sigma\Phi^T + (\Phi\mu - \bs y)^\T(\Phi\mu - \bs y)\Big) \\
	& \phantom{={}} - \frac 12 \Big(- \log |D\Sigma| - D + \tr(D\Sigma) + D\mu^T\mu\Big) + \log \pi(D)
\end{align*}
over $Q \in \mathcal Q_D$. Next the variational estimator $\hat D_n^{\mathrm{var}}$ is determined as the maximiser of the function $D\mapsto \ELBO(\tilde \Pi_D(\,\cdot\,|\bs x,\bs y))$.

\begin{corollary}\label{cor:dim}
Suppose the prior basis $(\varphi_j)_{j \in \bb N}$ in \eqref{e:dimprior} satisfies condition \eqref{e:eigBound} for $\gamma= 0$. If $f_0\in \mathcal S^\beta(L)$ for some $\beta > d/2$, then the variational posterior $\tilde\Pi_{\hat D_n^{\mathrm{var}}}(\,\cdot\,|\bs x,\bs y)$ achieves the (near) minimax contraction rate $n^{-\beta/(d+2\beta)}$, i.e.
\[
	\bb E_{f_0} \tilde \Pi_{\hat D_n^{\mathrm{var}}}(\|f-f_0\| \geq M_n (n/\log n)^{-\beta/(d+2\beta)}
	| \bs x,\bs y) \to 0
\]
for an arbitrary sequence $M_n$ tending to infinity.
\end{corollary}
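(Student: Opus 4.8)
The plan is to deduce Corollary~\ref{cor:dim} from the general variational contraction result Theorem~\ref{t:vbp.contr}, so the task reduces to verifying the hypotheses of that theorem (hence also those of Theorem~\ref{t:hbp.contr}) for this particular prior, hyper-prior and variational class, and then checking that the resulting oracle rate $\eps_{n,0}$ matches $(n/\log n)^{-\beta/(d+2\beta)}$ up to constants. First I would compute $\eps_n(D)$ from \eqref{e:eps.lambda}: for the truncated prior \eqref{e:dimprior}, $f - f_0 = D^{-1/2}\sum_{j\le D} Z_j\varphi_j - f_0$, and the small-ball probability $\Pi_D(\|f-f_0\| \le K\eps)$ splits into a bias part $\|f_0^{>D}\|$, which under $f_0\in\mathcal S^\beta(L)$ is $\lesssim D^{-\beta/d}$, and a Gaussian small-ball part for the $D$-dimensional coordinate vector, whose centered small-ball probability behaves like $(c\eps)^D$ up to the shift penalty $e^{-D\|f_0^{\le D}\|_\beta^2/2}$-type terms. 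Balancing $-\log$ of this against $n\eps^2$ yields $\eps_n(D)^2 \asymp D^{-2\beta/d} + (D/n)\log(n/D)$ (the log coming from $\eps$ being polynomially small in $n$), which is optimized at $D \asymp (n/\log n)^{d/(d+2\beta)}$ giving $\min_D \eps_n(D) \asymp (n/\log n)^{-\beta/(d+2\beta)}$. One sets $\delta_n$ to be a fixed small polynomial power of $n$ (e.g. $n^{-a}$ for small $a>0$) so that $\eps_{n,0} \asymp (n/\log n)^{-\beta/(d+2\beta)}$ and $n\delta_n^2\to\infty$ faster than $\log|\Lambda_n| = \log\sqrt n$.

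Next I would check the structural conditions. Since $\gamma=0$, \eqref{e:eigBound} holds with $C_\varphi$ the uniform bound on the basis, and $J_\gamma = (n/(\log n)^2)$; condition \eqref{e:f0.tail}, $\|f_0^{>J_\gamma}\|_\infty \le \delta_n$, follows from $f_0\in\mathcal S^\beta(L)$ with $\beta>d/2$ (so the Sobolev ball embeds in $C(\mathcal X)$ and tails decay polynomially, beating $\delta_n$ for the chosen $\delta_n$), and the prior tail \eqref{e:priortail} holds trivially because the prior \eqref{e:dimprior} is supported on $\Span\{\varphi_1,\dots,\varphi_D\}$ with $D\le\sqrt n \ll J_\gamma$, so $f^{>J_\gamma}=0$ almost surely. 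The scaling condition \eqref{e:prior.reg.sc} for a finite-dimensional centered Gaussian is standard: $-\log\Pi_D(\|f\|\le\xi\eps)$ and $-\log\Pi_D(\|f\|\le\eps)$ differ by $D\log(1/\xi)$ plus lower-order terms, and since $-\log\Pi_D(\|f\|\le\eps)\gtrsim D$ for small $\eps$ one gets \eqref{e:prior.reg.sc} with $c(\xi)$ depending only on $\xi$. The uniform hyper-prior on $\Lambda_n=\{i\le\sqrt n\}$ gives $-\log\min_{\Lambda_{n,0}}\pi(D) \le \log\sqrt n = o(n\delta_n^2)$, so \eqref{e:hyper.mass} holds, and $\min_D n\eps_n^2(D)\to\infty$ is immediate from the lower bound $\eps_n^2(D)\gtrsim 1/n$ times a diverging log, combined with the lower endpoint constraints.

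The remaining and, I expect, most delicate step is the variational-approximation condition \eqref{e:vbcCondition}: I must exhibit $D_n\in\Lambda_{n,0}$ with $\eps_n(D_n)=o(\eps_{n,0})$ — which forces $D_n$ slightly larger than the oracle $D^\ast\asymp(n/\log n)^{d/(d+2\beta)}$, say $D_n = D^\ast\log n$ — and a diagonal-covariance variational distribution $Q_{D_n}\in\mathcal Q_{D_n}$ with $\bb E_{f_0}\KL(Q_{D_n}\|\Pi_{D_n}(\cdot|\bs x,\bs y)) = o(n\eps_{n,0}^2)$. Here I would use the explicit form of $\Pi_D(\cdot|\bs x,\bs y)$ given in the excerpt: its coefficient covariance is $(DI + \sigma^{-2}\Phi^\T\Phi)^{-1}$, and since $\Phi^\T\Phi$ concentrates around $nI$ (by the law of large numbers for the bounded orthonormal basis, with fluctuations controlled on a high-probability event, e.g. via matrix concentration giving $\|\Phi^\T\Phi - nI\|_{\mathrm{op}} \lesssim \sqrt{nD\log n}$ for $D\le\sqrt n$), the true posterior is already nearly diagonal. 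Taking $Q_{D_n}$ with mean equal to the posterior mean and $\Sigma = \diag$ of the posterior covariance, the KL between two Gaussians with the same mean reduces to $\frac12\sum(\log(\Sigma_{jj}^{-1}\tilde\Sigma_{jj}) + \Sigma_{jj}\tilde\Sigma_{jj}^{-1} - 1)$-type terms plus a mean-mismatch term; bounding this requires showing the off-diagonal mass of $(D_nI+\sigma^{-2}\Phi^\T\Phi)^{-1}$ is small relative to $n\eps_{n,0}^2 \asymp D_n$, which follows from the operator-norm concentration above since the relative perturbation is $O(\sqrt{D_n\log n / n}) = o(1)$. (Alternatively, and perhaps more cleanly, one can bound \eqref{e:vbcCondition} via the chain $\KL(Q\|\Pi_D(\cdot|\bs x,\bs y)) \le \KL(Q\|\Pi_D) + \int\KL(\bb P_{f_0}^{(n)}\|\bb P_f^{(n)})\,dQ(f)$ displayed after Theorem~\ref{t:vbp.contr}, choosing $Q_{D_n}$ to have mean $f_0^{\le D_n}$ and a small isotropic variance; then the entropy term is an explicit finite-dimensional Gaussian KL of order $D_n\log n$ and the likelihood term is $\frac{n}{2\sigma^2}\bb E_{f_0}\|f-f_0\|_n^2 \lesssim n(D_n^{-2\beta/d} + \text{variance}) = o(n\eps_{n,0}^2)$ for suitably chosen variance and $D_n$.) Once \eqref{e:vbcCondition} is established, Theorem~\ref{t:vbp.contr} applies directly and yields the claimed contraction of $\tilde\Pi_{\hat D_n^{\mathrm{var}}}$ at rate $\eps_{n,0}\asymp(n/\log n)^{-\beta/(d+2\beta)}$, completing the proof.
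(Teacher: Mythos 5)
Your overall strategy is the right one — verify the hypotheses of Theorem~\ref{t:vbp.contr}, compute $\eps_n(D)$ via the concentration function, note that the prior tail condition is trivially satisfied because the prior is supported on $\Span\{\varphi_1,\ldots,\varphi_D\}$ with $D\leq\sqrt n\ll J_\gamma$, and check the hyper-prior and small-ball scaling conditions — and these steps match the paper in spirit. But your treatment of the crucial condition \eqref{e:vbcCondition} has a genuine bug: both variational distributions you propose are \emph{not members of the variational class} $\mathcal Q_{D}$. That class is
\[
\mathcal Q_D = \bigl\{\mathcal N(\sigma^{-2}\Sigma\Phi^\T\bs y,\,\Sigma) : \Sigma \in\bb R^{D\times D}\text{ diagonal, p.d.}\bigr\},
\]
so the mean is \emph{forced} to equal $\sigma^{-2}\Sigma\Phi^\T\bs y$ once $\Sigma$ is chosen. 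Your primary candidate (``mean equal to the posterior mean $\hat\mu=\sigma^{-2}\hat\Sigma\Phi^\T\bs y$ and $\Sigma=\diag(\hat\Sigma)$'') pairs a mean belonging to the full matrix $\hat\Sigma$ with a covariance that is its diagonal, violating the constraint; your ``more clean'' alternative $Q_{D_n}=\mathcal N(f_0^{\le D_n},\tau^2 I)$ is even further outside the class, since its mean is deterministic and not of the required form. This is not cosmetic: the whole point of the KL condition is that it must hold for some $Q$ the algorithm is actually allowed to select.

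The paper's choice is $\Sigma = (D+\sigma^{-2}n)^{-1}I$ (the diagonal approximation obtained by replacing $\Phi^\T\Phi$ with $nI$) with the \emph{induced} mean $\mu=\sigma^{-2}\Sigma\Phi^\T\bs y$. Because $\Sigma\neq\hat\Sigma$, this $\mu$ differs from the posterior mean $\hat\mu$, so the explicit Gaussian KL \eqref{e:kl.dim} has a genuine mean-mismatch term $(\mu-\hat\mu)^\T\hat\Sigma^{-1}(\mu-\hat\mu)$. Controlling this term is the heart of the argument and requires more than the operator-norm estimate $\|\Phi^\T\Phi-nI\|_{\mathrm{op}}\lesssim D\sqrt{n\log n}$ that a Hoeffding/Gershgorin bound yields; the paper switches to the matrix Bernstein inequality to obtain the sharper rate $\|\Phi^\T\Phi-nI\|_{\mathrm{op}}\lesssim\sqrt{Dn\log n}$, and then bounds $\bb E_{f_0}\bs y^\T\Phi\Phi^\T\bs y\lesssim n^2$ using $\|f_0\|_\infty<\infty$ (via $\beta>d/2$). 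Without the sharper Bernstein bound the mean-mismatch term would not be $o(n\eps_{n,0}^2)$. As a smaller point, your choice $D_n = D^\ast\log n$ is both unnecessary and counterproductive: it inflates the variance contribution $D_n(\log n)/n$ beyond $\eps_{n,0}^2$. You should instead take $D_n=D^\ast$; then $\eps_n(D_n)=\min_D\eps_n(D)=\eps_{n,0}/c_n=o(\eps_{n,0})$ automatically because $c_n\to\infty$ is built into $\eps_{n,0}$.
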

The proof is given in Appendix~\ref{sec:cor:dim}.

\acks{Co-funded by the European Union (ERC, BigBayesUQ, project number: 101041064). Views and opinions expressed are however those of the author(s) only and do not necessarily reflect those of the European Union or the European Research Council. Neither the European Union nor the granting authority can be held responsible for them.

We cordially thank Harry van Zanten for the fruitful discussions in the early stages of writing this paper.}

\appendix

\section{Proof of general contraction rate theorems}\label{sec:main:proofs}
In this section we collect the proofs for the contraction rates of the hierarchical, empirical and variational Bayes posteriors. But as a first step we study the asymptotic behaviour of the  maximum marginal likelihood estimator (MMLE). 

\subsection{Asymptotic behavior of the MMLE}
We show that with $\bb P_{f_0}^{(n)}$-probability tending to one the maximum marginal likelihood estimator $\hat\lambda_n$ defined in \eqref{def:MMLE} belongs to the set of good hyperparameters $\Lambda_{n,0}$ defined in \eqref{e:Lambda0}, where the corresponding posteriors $\Pi_\lambda(\,\cdot\,|\bs x,\bs y)$ nearly attain the oracle rate $\eps_{n,0}$. The lemma below is an adaptation of Theorem 2.1 of \cite{rousseau2017} to the random design regression model. A major difference is that we consider general basis functions satisfying certain boundedness and tail assumptions, see below. Since the proof consists several new technical arguments compared to \cite{rousseau2017}, we provide the whole proof for completeness and easier readability. 

\begin{lemma}\label{l:mmleSet}
Suppose that $\min_{\lambda\in\Lambda_n} n\eps_n^2(\lambda) \to \infty$ and $\log |\Lambda_n| = o(n\delta_n^2)$. Then under conditions~\eqref{e:eigBound}, \eqref{e:f0.tail}, \eqref{e:priortail}, \eqref{e:prior.reg.sc} we have
\begin{equation}\label{e:inLambda0}
\lim_{n\to\infty}\bb P_{f_0}^{(n)}(\hat\lambda_n \in \Lambda_{n,0}) = 1.
\end{equation}
\end{lemma}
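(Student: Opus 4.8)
The plan is to show that the maximum marginal likelihood estimator $\hat\lambda_n$, which maximizes $\lambda \mapsto \int p_f^{(n)} \, d\Pi_\lambda(f)$, cannot select a hyperparameter $\lambda$ with $\eps_n(\lambda) > \eps_{n,0}$, because such a $\lambda$ makes the marginal likelihood too small compared to the value attained at some oracle $\lambda^* \in \Lambda_{n,0}$. Concretely, I would fix a reference hyperparameter $\lambda^*$ achieving (up to the slowly diverging $c_n$) the minimum $\eps_n(\lambda)$, so $\eps_n(\lambda^*) \le \eps_{n,0}$, and lower bound $\int p_f^{(n)}/p_{f_0}^{(n)} \, d\Pi_{\lambda^*}(f)$ on a high-probability event. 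The standard device here is to restrict the integral to the ball $\{\|f - f_0\| \le K\eps_n(\lambda^*)\}$, whose prior mass is $\exp(-n\eps_n^2(\lambda^*))$ by the definition~\eqref{e:eps.lambda}, and to control the likelihood ratio on this ball. This is where conditions~\eqref{e:eigBound}, \eqref{e:f0.tail}, \eqref{e:priortail} enter: the log-likelihood ratio in the regression model involves $\|f - f_0\|_n^2$ and a linear Gaussian term, and one must pass from the empirical norm $\|\cdot\|_n$ to the population norm $\|\cdot\|$; splitting $f - f_0$ into its projection onto $\varphi_1,\dots,\varphi_{J_\gamma}$ (where the two norms are comparable by a Bernstein/concentration argument using the polynomial eigenfunction bound) and a tail handled by \eqref{e:f0.tail} and \eqref{e:priortail} gives $\int p_f^{(n)}/p_{f_0}^{(n)} \, d\Pi_{\lambda^*}(f) \ge \exp(-C n \eps_{n,0}^2)$ with $\bb P_{f_0}^{(n)}$-probability $\to 1$.

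Next I would upper bound the marginal likelihood ratio $\int p_f^{(n)}/p_{f_0}^{(n)} \, d\Pi_\lambda(f)$ uniformly over the ``bad'' set $\Lambda_n \setminus \Lambda_{n,0}$. The idea is a testing argument: for each bad $\lambda$ there is a test $\phi_\lambda$ separating $f_0$ from $\{\|f - f_0\| > K\eps_n(\lambda)\}$ with exponentially small errors, constructed from a covering of the relevant sieve; here condition~\eqref{e:prior.reg.sc} is used to upgrade the crude metric entropy bound coming from~\eqref{e:eps.lambda} into one sharp enough that the test errors beat the prior small-ball mass. On the complement of the test's acceptance region one bounds the likelihood ratio directly; on the acceptance region the contribution of $\{\|f - f_0\| > K\eps_n(\lambda)\}$ is negligible, while the mass $\Pi_\lambda(\|f - f_0\| \le K\eps_n(\lambda)) = \exp(-n\eps_n^2(\lambda)) \le \exp(-n\eps_{n,0}^2)$ is too small (since $\eps_n(\lambda) > \eps_{n,0}$ on the bad set) to compensate. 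A union bound over $\Lambda_n$ costs a factor $|\Lambda_n|$, absorbed because $\log|\Lambda_n| = o(n\delta_n^2) = o(n\eps_{n,0}^2)$. Combining, on a high-probability event, every bad $\lambda$ has strictly smaller marginal likelihood than $\lambda^*$, so $\hat\lambda_n \in \Lambda_{n,0}$.

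The main obstacle, and where most of the technical work lies, is the norm comparison and the construction of exponentially powerful tests in the \emph{random design} regression model with \emph{general} eigenbasis: one cannot simply quote the results of \cite{rousseau2017}. I would carry out a Bernstein-type concentration argument showing that, simultaneously over a $\delta_n$-net of the $J_\gamma$-dimensional coordinate projections, $\|g\|_n^2$ and $\|g\|^2$ differ by at most a constant factor (the $J_\gamma$ and the exponent in~\eqref{e:eigBound} are calibrated precisely so that $J_\gamma \cdot (\text{poly eigenfunction bound})$ is controlled by $n/(\log n)^2$), and then handle the high-frequency part of both $f_0$ and a prior draw via~\eqref{e:f0.tail} and~\eqref{e:priortail}. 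The Gaussian linear term in the likelihood ratio must also be controlled uniformly, again via a chaining/union-bound argument over the net, with deviations of order $n\eps_{n,0}^2$. Once these norm-comparison and test-construction lemmas are in place, the remainder of the argument is the relatively routine bookkeeping of lower bound versus upper bound sketched above.
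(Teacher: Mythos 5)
Your high-level strategy matches the paper's: lower bound the evidence at an oracle $\lambda^*$ minimizing $\eps_n(\lambda)$ (which lies in $\Lambda_{n,0}$ since $\min_\lambda\eps_n(\lambda)\le\eps_{n,0}/c_n$), upper bound the evidence uniformly over $\Lambda_n\setminus\Lambda_{n,0}$ via tests built from a sieve covering whose entropy is controlled using \eqref{e:prior.reg.sc}, and union-bound using $\log|\Lambda_n|=o(n\delta_n^2)$. Your description of the testing step --- comparing empirical and population norms on the span of $\varphi_1,\dots,\varphi_{J_\gamma}$ and handling the tails via \eqref{e:f0.tail}--\eqref{e:priortail} --- is also on target. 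The one place you depart from the paper is the evidence lower bound at $\lambda^*$: you propose to restrict the integral to the $L^2$ ball and then control the likelihood ratio on it, and you believe this forces a passage from $\|\cdot\|_n$ to $\|\cdot\|$ and therefore requires \eqref{e:eigBound}, \eqref{e:f0.tail}, \eqref{e:priortail}. That route would work but is unnecessarily heavy. Because the design is random, the Kullback--Leibler divergence $K(p_{f_0}^{(n)},p_f^{(n)})$ taken over $(\bs x,\bs y)$ jointly equals $\tfrac{n}{2\sigma^2}\|f-f_0\|^2$, and likewise for the second moment $V_2$ (Lemma 2.7 of \cite{ghosal2017}); hence the $L^2(\mathcal X,G)$ ball \emph{is} the KL neighborhood $B_n(f_0,\eps;2)$, and Lemma 10 of \cite{ghosal2007} gives the bound \eqref{e:lambdaELB} directly --- no norm comparison, and none of \eqref{e:eigBound}, \eqref{e:f0.tail}, \eqref{e:priortail}, are needed there. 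These conditions are used only in the testing step, where the test of Lemma~\ref{l:emp.test} is formulated in $\|\cdot\|_n$ while the entropy and remaining-mass bounds are in $\|\cdot\|$. A secondary technical difference: for the low-frequency norm comparison the paper bounds $\bb E\|n^{-1}\Phi^\T\Phi-\bb I_{J_\gamma}\|_{\mathrm{op}}$ in one shot via \cite{rudelson1999} rather than the net-plus-Bernstein chaining you sketch; both are viable, but the operator-norm route is shorter.
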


\begin{proof}
We bound the evidence $\int p_f^{(n)}/p_{f_0}^{(n)}(\bs x,\bs y) \,d\Pi_\lambda(f)$ from below using Lemma 10 in \cite{ghosal2007}. In their notation with $k=2$ and $\eps=(K/\sigma)\eps_n(\lambda)$, for $\eps_n(\lambda)$ as in \eqref{e:eps.lambda}, it follows that
\[
	B_n(f_0,\eps;2)
	= \{f:\, K(p_{f_0}^{(n)},p_f^{(n)})\leq n\eps^2,\, V_2(p_{f_0}^{(n)},p_f^{(n)})\leq n\eps^2  \}
= \{ f : \|f-f_0\| \leq K\eps_n(\lambda) \},
\]
with $V_2(p,q)=\int p|\log(p/q)|^2d\mu$, where the last equation follows from Lemma 2.7 in \cite{ghosal2017}. Consequently, we obtain for any $\lambda \in \Lambda_n$
\begin{align}
\nonumber
& \bb P_{f_0}^{(n)}\Big( \int \frac{p_f^{(n)}}{p_{f_0}^{(n)}}(\bs x,\bs y) \,d\Pi_\lambda(f) \leq e^{-(1+2K^2/\sigma^2)n\eps_n^2(\lambda)} \Big) \\
\nonumber
& \leq\bb P_{f_0}^{(n)}\Big( \int_{B_n(f_0,\eps;2)} \frac{p_f^{(n)}}{p_{f_0}^{(n)}}(\bs x,\bs y) \,d\Pi_\lambda(f) \leq \Pi_\lambda(B_n(f_0,\eps;2)) e^{-2n\eps^2} \Big) \\
\label{e:lambdaELB}
& \leq \frac{\sigma^2}{K^2n\eps_n^2(\lambda)}.
\end{align}
This holds in particular for $\lambda_n$ that minimises $\eps_n(\lambda)$, that is,
\begin{equation}\label{e:evidenceLB0}
	\bb P_{f_0}^{(n)}\Big(\int \frac{p_f^{(n)}}{p_{f_0}^{(n)}}(\bs x,\bs y) \,d\Pi_{\lambda_n}(f) > \exp(-C_1 n\eps_n^2(\lambda_n))\Big) \to 1,
\end{equation}
where for convenience we define $C_1 := 1 + 2K^2/\sigma^2$. Since $\hat\lambda_n$ is defined as the maximiser of the integral, it follows that
\begin{equation}\label{e:evidenceLB}
\bb P_{f_0}^{(n)}\Big(\int \frac{p_f^{(n)}}{p_{f_0}^{(n)}}(\bs x,\bs y) \,d\Pi_{\hat\lambda_n}(f) > \exp(-n\eps_{n,0}^2/2)\Big) \to 1.
\end{equation}

The proof of \eqref{e:inLambda0} is completed by showing that
\begin{equation}\label{e:evidenceUB}
\bb P_{f_0}^{(n)} \Big( \max_{\lambda \in \Lambda_n\setminus\Lambda_{n,0}} \int \frac{p_f^{(n)}}{p_{f_0}^{(n)}}(\bs x,\bs y) \,d\Pi_\lambda(f) > \exp(-n\eps_{n,0}^2/2)\Big) \to 0.
\end{equation}
Indeed, together with the preceding display this implies that with probability tending to one the evidence is maximised at some $\lambda\in\Lambda_{n,0}$. The proof of \eqref{e:evidenceUB} is done via a prior mass and testing argument. 

\paragraph{Comparison of norms} For $\gamma$ in condition~\eqref{e:eigBound} let $J_\gamma = J_\gamma(n) = (n/\log^2 n)^{d/(d+2\gamma)}$ and $\Phi$ denote the $n \times J_\gamma$ matrix with entries $\Phi_{i,j} = \varphi_j(x_i)$. Then by \cite{rudelson1999},
\[ \bb E_{f_0} \| n^{-1} \Phi'\Phi - \bb I_{J_\gamma} \|_{\mathrm{op}} \lesssim \Big(\frac{\log {J_\gamma}}{n} \sum_{j=1}^{J_\gamma} j^{2\gamma/d}\Big)^{1/2} \lesssim \Big(\frac{J_\gamma^{1+2\gamma/d}\log {J_\gamma}}{n}\Big)^{1/2} \to 0, \]
where $\bb I_{J_\gamma}$ is the ${J_\gamma}$-dimensional identity matrix, and $\|\cdot\|_{\mathrm{op}}$ is the Euclidean operator norm. So the event
\begin{equation}\label{e:Bn}
\mathcal B_n := \{\bs x : \|n^{-1}\Phi'\Phi - \bb I_{J_\gamma}\|_{\mathrm{op}} < 1/2\}
\end{equation}
has $\bb P_{f_0}^{(n)}$-probability going to 1. If $f = \sum_{j=1}^{J_\gamma} \ip{f,\varphi_j}\varphi_j$ and $\bs f = (\ip{f,\varphi_1},\ldots,\ip{f,\varphi_{J_\gamma}})$, then
\[ |\|f\|_n^2 - \|f\|^2| = |\bs f' (n^{-1} \Phi'\Phi - \bb I_{J_\gamma}) \bs f| \leq \|n^{-1}\Phi'\Phi - \bb I_{J_\gamma}\|_{\mathrm{op}} \|f\|^2. \]
More generally, if $f = \sum_{j=1}^\infty \ip{f,\varphi_j}\varphi_j \in L^2(\mathcal X, G)$, then the truncated series $f^{\leq {J_\gamma}} = \sum_{j\leq J_\gamma} \ip{f,\varphi_j}\varphi_j$ satisfies on the event $\mathcal B_n$ 
\begin{equation}\label{e:comparison}
\frac 12 \|f^{\leq {J_\gamma}}\| \leq \|f^{\leq {J_\gamma}}\|_n \leq 2 \|f^{\leq {J_\gamma}}\|.
\end{equation}

\paragraph{Remaining prior mass and entropy} 
For arbitrary $\xi>0$ we construct a set $\mathcal F_n(\lambda) \subseteq L^2(\mathcal X, G)$ such that
\begin{equation}\label{e:lambdaPM}
	\Pi_\lambda(\mathcal F_n(\lambda)^c) \leq \exp(-n\eps_n^2(\lambda))
\end{equation} 
and
\begin{equation}\label{e:lambdaEB}
	\log N(3\xi K\eps_n(\lambda), \mathcal F_n(\lambda), \|\cdot\|)
	\leq  6 c(\xi) n\eps_n^2(\lambda),
\end{equation}
where $N(3\xi K\eps_n(\lambda), \mathcal F_n(\lambda), \|\cdot\|)$ denotes the minimal number of $L^2(\mathcal X,G)$-balls of radius $3\xi K\eps_n(\lambda)$ required to cover the set $\mathcal F_n(\lambda)$.

We follow the construction in Theorem 2.1 in \cite{vaart2008}. Let us start by introducing the concentration function
\begin{align}\label{def:conc:funct}
	\varphi_{f_0,\lambda}(\eps) := \inf_{\substack{h \in \bb H_\lambda : \\ \|h-f_0\|\leq\eps}} 	\frac12 \|h\|_{\bb H_\lambda}^2 - \log \Pi_\lambda(\|f\| \leq \eps),
\end{align}
where $\bb H_\lambda$ is the reproducing kernel Hilbert space of the prior kernel $k_\lambda$. By Lemma~5.3 in \cite{vaart2008b} and by definition of $\eps_n(\lambda)$, it follows that
\[
	\varphi_{0,\lambda}(K\eps_n(\lambda)) \leq \varphi_{f_0,\lambda}(K \eps_n(\lambda))
	\leq - \log \Pi_\lambda(\|f-f_0\| \leq K \eps_n(\lambda)) = n\eps_n^2(\lambda).
\]
Together with assumption \eqref{e:prior.reg.sc} we obtain the concentration inequality
\begin{equation}\label{e:conc.lambda}
	\varphi_{0,\lambda}(\xi K \eps_n(\lambda)) \leq c(\xi) n\eps_n^2(\lambda).
\end{equation}
Define $D_n = - 2\Phi^{-1}(e^{-c(\xi) n\eps_n^2(\lambda)})$ where $\Phi$ denotes the standard normal CDF and let
\[
	\mathcal F_n(\lambda) = \xi K \eps_n(\lambda) \bb B_1 + D_n \bb H_{\lambda,1},
\]
where $\mathbb B_1$ and $\mathbb H_{\lambda,1}$ denote the unit balls of $L^2(\mathcal X, G)$ and $\bb H_\lambda$, respectively. By the inequality from \cite{borell1975} and \eqref{e:conc.lambda} (and $c(\xi) \geq 1$) it follows that
\begin{align*}
	\Pi_\lambda(\mathcal F_n(\lambda)^c)
	& \leq 1 - \Phi(\Phi^{-1}(e^{-\varphi_{0,\lambda}(\xi K\eps_n(\lambda))}) + D_n) \\
	& \leq 1 - \Phi(-\Phi^{-1}(e^{-n\eps_n^2(\lambda)})) \\
	& = e^{- n\eps_n^2(\lambda)}.
\end{align*}
Furthermore, the argument in the proof of Theorem 2.1 in \cite{vaart2008} shows that \eqref{e:conc.lambda} implies
\[
	\log N(3\xi K \eps_n(\lambda), \mathcal F_n(\lambda),\|\cdot\|) \leq D_n^2/2
	+ \varphi_{0,\lambda}(\xi K \eps_n(\lambda)) \leq 6c(\xi)n\eps_n^2(\lambda),
\]
which is \eqref{e:lambdaEB}.

\paragraph{Testing} Given any hyperparameter $\lambda \in \Lambda_n \setminus \Lambda_{n,0}$ and constant $\xi>0$ small enough we construct a test for $f_0$ versus the set
\[ \mathcal F_n'(\lambda) = \{f \in \mathcal F_n(\lambda) : \|f-f_0\| \geq K\eps_n(\lambda), \|f^{>J_\gamma}\|_\infty \leq \xi K \eps_n(\lambda) \}. \]
From the entropy bound \eqref{e:lambdaEB} we obtain a cover of this set by at most $\exp(6c(\xi) n\eps_n^2(\lambda))$ balls of the form $\{f : \|f-f_i\| \leq 3\xi K \eps_n(\lambda)\}$. By doubling the radii of the balls, we may move the centers $f_i$ inside the set $\mathcal F_n'(\lambda)$, so that in particular $\|f_i-f_0\| \geq K\eps_n(\lambda)$ and $\|f_i^{>J_\gamma}\|_\infty \leq \xi K \eps_n(\lambda)$ for every $i$.

Consider $\bs x$ in the event $\mathcal B_n$ from \eqref{e:Bn} and consider $f$ both in $\mathcal F_n'(\lambda)$ and in one of the balls $\{f : \|f-f_i\| \leq 6\xi K\eps_n(\lambda)\}$ in the aforementioned cover. For $\lambda \in \Lambda_n\setminus \Lambda_{n,0}$ we have $\eps_n(\lambda) \geq \eps_{n,0}$, so we obtain for $\xi = 1/90$ and $K\geq 90$,
\begin{equation}\label{e:f0.tail.eps}
	\|f_0^{>J_\gamma}\|_\infty \leq \delta_n \leq \xi K \eps_n(\lambda).
\end{equation}
It follows that
\begin{align*}
\|f-f_i\|_n
& \leq \|(f-f_i)^{\leq J_\gamma}\|_n + \|f^{>J_\gamma}\|_\infty + \|f_i^{>J_\gamma}\|_\infty \\
& \leq 2 \|f-f_i\| + 2\xi K \eps_n(\lambda) \\
& \leq 14 \xi K \eps_n(\lambda) \\
& = \frac 16 K\eps_n(\lambda) - \xi K \eps_n(\lambda) \\
& \leq \frac 16 \|f_i-f_0\| - \frac 12(\|f_i^{>J_\gamma}\|_\infty + \|f_0^{>J_\gamma}\|_\infty) \\
& \leq \frac 16 \|(f_i-f_0)^{\leq J_\gamma}\| - \frac 13(\|f_i^{>J_\gamma}\|_\infty + \|f_0^{>J_\gamma}\|_\infty) \\
& \leq \frac 13 \|(f_i-f_0)^{\leq J_\gamma}\|_n - \frac 13 \|(f_i-f_0)^{>J_\gamma}\|_n \\
& \leq \frac 13 \|f_i-f_0\|_n.
\end{align*}
This implies in particular that $\|f_i-f_0\|_n \geq \frac{7}{15} K \eps_n(\lambda)$. By Lemma~\ref{l:emp.test} it follows that for every $f_i$ there exists a test $\phi_{n,i}(\lambda)$ such that
\[
	\bs 1_{\mathcal B_n} \bb E_{f_0}[\phi_{n,i}(\lambda) | \bs x]
	\leq e^{-n\|f_0-f_i\|_n^2/(8\sigma^2)} \leq e^{-(K/\sigma)^2 n\eps_n^2(\lambda)/40}
\]
and (from the preceding display)
\begin{align*}
	 \sup_{\substack{f \in \mathcal F_n'(\lambda): \\ \|f-f_i\| \leq 6\xi K\eps_n(\lambda)}}
	\bs 1_{\mathcal B_n} \bb E_f[1-\phi_{n,i}(\lambda) | \bs x] 
	& \leq \sup_{\substack{f \in \mathcal F_n'(\lambda): \\ \|f-f_i\|_n \leq \|f_i-f_0\|_n/3}} \bs 1_{\mathcal B_n} \bb E_f[1-\phi_{n,i}(\lambda) | \bs x] \\
	& \leq \exp(-(K/\sigma)^2n\eps_n^2(\lambda)/360).
\end{align*}
For $K^2 \geq 40\sigma^2(6 c(\xi)+1) \vee 360\sigma^2$, it follows that the test $\phi_n(\lambda) := \max_i \phi_{n,i}(\lambda)$ satisfies, for any $\lambda \in \Lambda_n \setminus \Lambda_{n,0}$,
\begin{equation}\label{e:test.slow.I}
	\bs 1_{\mathcal B_n} \bb E_{f_0}[\phi_n(\lambda) | \bs x] 
	\leq e^{6c(\xi)n\eps_n^2(\lambda) - (K/\sigma)^2 n\eps_n^2(\lambda)/40} 
	\leq e^{-n\eps_{n,0}^2}
\end{equation}
and
\begin{equation}\label{e:test.slow.II}
	\sup_{f \in \mathcal F_n'(\lambda)} \bs 1_{\mathcal B_n} \bb E_f[1-\phi_n(\lambda) | \bs x] \leq e^{-(K/\sigma)^2n\eps_n^2(\lambda)/360} \leq e^{-n\eps_{n,0}^2}.
\end{equation}

By \eqref{e:lambdaPM}, the definition of $\eps_n(\lambda)$ and assumption \eqref{e:priortail}, the remaining prior mass of the set on which we do not test is
\begin{align}
	\nonumber
	 \Pi_\lambda(\mathcal F_n'(\lambda)^c) 
	\nonumber
	& \leq \Pi_\lambda(\mathcal F_n(\lambda)^c) + \Pi_\lambda(\|f-f_0\| \leq K\eps_n(\lambda)) + 	\Pi_\lambda(\|f^{>J_\gamma}\|_\infty \geq \delta_n) \\
	\label{e:rem.slow}
	& \leq e^{-n\eps_n^2(\lambda)} + e^{-n\eps_n^2(\lambda)} + e^{-n\eps_{n,0}^2}.
\end{align}

\paragraph{Completion of the argument} We are now ready to prove \eqref{e:evidenceUB}. Using the tests $\phi_n(\lambda)$ defined above, note that
\begin{align}
\nonumber
& \bb P_{f_0}^{(n)} \Big( \max_{\lambda \in \Lambda_n\setminus\Lambda_{n,0}} \int \frac{p_f^{(n)}}{p_{f_0}^{(n)}}(\bs x,\bs y) \,d\Pi_\lambda(f) > \exp(-n\eps_{n,0}^2/2) \Big) \\
\nonumber
& \leq \bb P_{f_0}^{(n)}(\mathcal B_n^c) + \sum_{\lambda \in \Lambda_n \setminus \Lambda_{n,0}} \bb P_{f_0}^{(n)} \Big(\mathcal B_n \cap \Big\{\int \frac{p_f^{(n)}}{p_{f_0}^{(n)}}(\bs x,\bs y) \,d\Pi_\lambda(f) > \exp(-n\eps_{n,0}^2/2)\Big\} \Big) \\
\label{e:falseReject}
& \leq \bb P_{f_0}^{(n)}(\mathcal B_n^c) + \sum_{\lambda \in \Lambda_n \setminus \Lambda_{n,0}} \bb E_{f_0} \bs 1_{\mathcal B_n} \phi_n(\lambda) \\
\label{e:falseAccept}
& \phantom{\leq{}} + \exp(n\eps_{n,0}^2/2) \sum_{\lambda \in \Lambda_n \setminus \Lambda_{n,0}} \bb E_{f_0} \bs 1_{\mathcal B_n} (1-\phi_n(\lambda)) \int_{\mathcal F_n'(\lambda)} \frac{p_f^{(n)}}{p_{f_0}^{(n)}}(\bs x,\bs y) \,d\Pi_\lambda(f) \\
\label{e:remainingMass}
& \phantom{\leq{}} + \exp(n\eps_{n,0}^2/2) \sum_{\lambda \in \Lambda_n \setminus \Lambda_{n,0}} \bb E_{f_0} \int_{\mathcal F_n'(\lambda)^c} \frac{p_f^{(n)}}{p_{f_0}^{(n)}}(\bs x,\bs y) \,d\Pi_\lambda(f).
\end{align}
We showed earlier that $\mathcal B_n^c$ has probability tending to zero. By \eqref{e:test.slow.I} the sum of type I errors in \eqref{e:falseReject} has upper bound
\[
	|\Lambda_n \setminus \Lambda_{n,0}| \sup_{\lambda \in \Lambda_n\setminus\Lambda_{n,0}}
	\bb E (\bs 1_{\mathcal B_n} \bb E_{f_0}[\phi_n(\lambda) | \bs x])
	\leq |\Lambda_n| \exp(-n\eps_{n,0}^2).
\]
This term vanishes by the assumption on the size of $\Lambda_n$. Similarly \eqref{e:test.slow.II} implies that the sum in \eqref{e:falseAccept} is bounded from above by
\begin{align*}
&\sum_{\lambda \in \Lambda_n \setminus \Lambda_{n,0}}\bb E \Big( \bs 1_{\mathcal B_n} \bb E_{f_0}\Big[(1-\phi_n(\lambda)) \int_{\mathcal F_n'(\lambda)} \frac{p_f^{(n)}}{p_{f_0}^{(n)}}(\bs y|\bs x) \,d\Pi_\lambda(f) \Big| \bs x \Big] \Big) \\
& \leq \sum_{\lambda \in \Lambda_n \setminus \Lambda_{n,0}}\bb E \int_{\mathcal F_n'(\lambda)} \bs 1_{\mathcal B_n} \bb E_f[1-\phi_n(\lambda)|\bs x] \,d\Pi_\lambda(f) \\
& \leq |\Lambda_n| \exp(-n\eps_{n,0}^2),
\end{align*}
and so the full term \eqref{e:falseAccept} vanishes. Finally, using \eqref{e:rem.slow}, the term in \eqref{e:remainingMass} is bounded from above by
\begin{align*}
\exp(n\eps_{n,0}^2/2) |\Lambda_n\setminus\Lambda_{n,0}| \max_{\lambda \in \Lambda_n\setminus\Lambda_{n,0}} \Pi_\lambda(\mathcal F_n'(\lambda)^c) \lesssim |\Lambda_n| \exp(-n\eps_{n,0}^2/2)
\end{align*}
which also vanishes.
\end{proof}

\subsection{Contraction rate for the empirical Bayes posterior}
Let us recall the definition of the empirical Bayes posterior given in \eqref{def:eb}, which is attained by plugging in the MMLE to the posterior distribution. This empirical approach is often used in practice as a natural and computationally more convenient data-driven tuning instead of the hierarchical Bayes method. The theorem below is the adaptation of Theorem 2.2 of \cite{rousseau2017} to our setting. Similarly to Lemma \ref{l:mmleSet}, several technical issues had to be overcome when relating the empirical and standard $L^2$-norms, hence we provide the main steps of the proof below.
\begin{theorem}\label{t:ebp.contr}
Under the conditions of Lemma~\ref{l:mmleSet}, there exists $M>0$ large enough such that
\[
	\bb E_{f_0}\Pi_{\hat\lambda_n} (\|f-f_0\|>M \eps_{n,0} | \bs x, \bs y) \to 0.
\]
\end{theorem}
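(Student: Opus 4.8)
The plan is to localise the MMLE via Lemma~\ref{l:mmleSet} and then run a prior-mass-and-testing argument for the posterior $\Pi_\lambda(\,\cdot\,|\bs x,\bs y)$ uniformly over $\lambda\in\Lambda_{n,0}$, but at the single target radius $\eps_{n,0}$ rather than the $\lambda$-dependent rate $\eps_n(\lambda)$. Write $\mathcal U_n := \{f\in L^2(\mathcal X,G):\|f-f_0\|>M\eps_{n,0}\}$ and introduce the event
\[
	\mathcal A_n := \mathcal B_n \cap \{\hat\lambda_n\in\Lambda_{n,0}\} \cap \Big\{ \int \tfrac{p_f^{(n)}}{p_{f_0}^{(n)}}(\bs x,\bs y)\,d\Pi_{\hat\lambda_n}(f) > e^{-n\eps_{n,0}^2/2} \Big\},
\]
with $\mathcal B_n$ the norm-comparison event \eqref{e:Bn}. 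By Lemma~\ref{l:mmleSet}, the estimate preceding \eqref{e:Bn}, and \eqref{e:evidenceLB}, $\bb P_{f_0}^{(n)}(\mathcal A_n^c)=o(1)$, so since $\bb E_{f_0}\Pi_{\hat\lambda_n}(\mathcal U_n|\bs x,\bs y) \le \bb E_{f_0}[\bs 1_{\mathcal A_n}\Pi_{\hat\lambda_n}(\mathcal U_n|\bs x,\bs y)] + \bb P_{f_0}^{(n)}(\mathcal A_n^c)$ it suffices to control the first term; and on $\mathcal A_n$ the Bayes denominator is at least $e^{-n\eps_{n,0}^2/2}$, so $\Pi_{\hat\lambda_n}(\mathcal U_n|\bs x,\bs y) \le e^{n\eps_{n,0}^2/2}\int_{\mathcal U_n} p_f^{(n)}/p_{f_0}^{(n)}\,d\Pi_{\hat\lambda_n}$.

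Next I would rebuild the sieve of Lemma~\ref{l:mmleSet} at radius $\eps_{n,0}$. Since $\eps_n(\lambda)\le\eps_{n,0}$ for $\lambda\in\Lambda_{n,0}$ and the concentration function $\varphi_{0,\lambda}$ is non-increasing, \eqref{e:conc.lambda} gives $\varphi_{0,\lambda}(\xi K\eps_{n,0}) \le c(\xi)n\eps_{n,0}^2$; taking $\bar D_n = -2\Phi^{-1}(e^{-c(\xi)n\eps_{n,0}^2})$ and $\mathcal F_n(\lambda) = \xi K\eps_{n,0}\bb B_1 + \bar D_n\bb H_{\lambda,1}$, the Borell inequality and the entropy estimate of \cite{vaart2008} yield, uniformly over $\lambda\in\Lambda_{n,0}$, the bounds $\Pi_\lambda(\mathcal F_n(\lambda)^c) \le e^{-n\eps_{n,0}^2}$ and $\log N(3\xi K\eps_{n,0},\mathcal F_n(\lambda),\|\cdot\|) \le 6c(\xi)n\eps_{n,0}^2$. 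With $\xi=1/90$ and $K$ a large enough constant, the same chain of norm comparisons as in the ``Testing'' paragraph of Lemma~\ref{l:mmleSet} (now using $\|f_0^{>J_\gamma}\|_\infty\le\delta_n\le\eps_{n,0}\le\xi K\eps_{n,0}$ from \eqref{e:f0.tail}) shows that a $3\xi K\eps_{n,0}$-cover of $\mathcal F_n'(\lambda) := \{f\in\mathcal F_n(\lambda):\|f-f_0\|>M\eps_{n,0},\ \|f^{>J_\gamma}\|_\infty\le\xi K\eps_{n,0}\}$ by at most $e^{6c(\xi)n\eps_{n,0}^2}$ balls, with centers $f_i$ moved into $\mathcal F_n'(\lambda)$, satisfies $\|f-f_i\|_n\le\tfrac13\|f_i-f_0\|_n$ and $\|f_i-f_0\|_n\gtrsim M\eps_{n,0}$ on $\mathcal B_n$ whenever $f$ lies in the $i$-th ball. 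Lemma~\ref{l:emp.test} then gives tests $\phi_n(\lambda) = \max_i\phi_{n,i}(\lambda)$ with $\bs 1_{\mathcal B_n}\bb E_{f_0}[\phi_n(\lambda)|\bs x] \le \exp(6c(\xi)n\eps_{n,0}^2 - cM^2n\eps_{n,0}^2/\sigma^2)$ and $\sup_{f\in\mathcal F_n'(\lambda)}\bs 1_{\mathcal B_n}\bb E_f[1-\phi_n(\lambda)|\bs x] \le \exp(-cM^2n\eps_{n,0}^2/\sigma^2)$, both at most $e^{-2n\eps_{n,0}^2}$ once $M$ is large enough (depending only on $c(\xi)$, $\sigma$, $K$). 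Finally, by \eqref{e:priortail} and $\delta_n\le\xi K\eps_{n,0}$, the part of $\mathcal U_n$ outside $\mathcal F_n'(\lambda)$ has prior mass $\Pi_\lambda(\mathcal U_n\setminus\mathcal F_n'(\lambda)) \le \Pi_\lambda(\mathcal F_n(\lambda)^c) + \Pi_\lambda(\|f^{>J_\gamma}\|_\infty>\delta_n) \le 2e^{-n\eps_{n,0}^2}$.

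It then remains to assemble these pieces as in the ``Completion of the argument'' paragraph of Lemma~\ref{l:mmleSet}, but summing over $\lambda\in\Lambda_{n,0}$ in place of $\Lambda_n\setminus\Lambda_{n,0}$. Using $\Pi_{\hat\lambda_n}(\mathcal U_n|\bs x,\bs y) \le \phi_n(\hat\lambda_n) + (1-\phi_n(\hat\lambda_n))\,e^{n\eps_{n,0}^2/2}\int_{\mathcal U_n}p_f^{(n)}/p_{f_0}^{(n)}\,d\Pi_{\hat\lambda_n}$ on $\mathcal A_n$, the bound $\bs 1_{\hat\lambda_n\in\Lambda_{n,0}}g(\hat\lambda_n) \le \sum_{\lambda\in\Lambda_{n,0}}g(\lambda)$ for nonnegative $g$, the splitting $\mathcal U_n = \mathcal F_n'(\lambda)\cup(\mathcal U_n\setminus\mathcal F_n'(\lambda))$, Fubini, and the test/prior-mass bounds above, one obtains
\[
	\bb E_{f_0}[\bs 1_{\mathcal A_n}\Pi_{\hat\lambda_n}(\mathcal U_n|\bs x,\bs y)]
	\le |\Lambda_n| e^{-2n\eps_{n,0}^2} + e^{n\eps_{n,0}^2/2}\,|\Lambda_n|\,\big(e^{-2n\eps_{n,0}^2} + 2e^{-n\eps_{n,0}^2}\big) \to 0,
\]
because $\log|\Lambda_n| = o(n\delta_n^2) = o(n\eps_{n,0}^2)$. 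Together with $\bb P_{f_0}^{(n)}(\mathcal A_n^c)=o(1)$ this proves the theorem. The main obstacle, and essentially the only place the proof departs from Lemma~\ref{l:mmleSet}, is that the sieve, entropy bound and tests must be produced at the common radius $\eps_{n,0}$ uniformly in $\lambda\in\Lambda_{n,0}$, where $\eps_n(\lambda)$ can be strictly smaller than $\eps_{n,0}$; this forces re-deriving the remaining-mass and concentration-function estimates at $\eps_{n,0}$ (via monotonicity of $\varphi_{0,\lambda}$) instead of quoting their $\lambda$-dependent counterparts, after which the testing bookkeeping is routine.
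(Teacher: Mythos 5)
Your proposal is correct and follows essentially the same route as the paper's proof. Both arguments localise $\hat\lambda_n$ in $\Lambda_{n,0}$ via Lemma~\ref{l:mmleSet}, lower-bound the Bayes denominator on a high-probability event, rebuild the sieve and entropy bounds at the uniform level $n\eps_{n,0}^2$ using $\eps_n(\lambda)\le\eps_{n,0}$ and the concentration-function inequality \eqref{e:conc.lambda}, then run the same testing and prior-mass bookkeeping over $\lambda\in\Lambda_{n,0}$; the only cosmetic difference is that you set the sieve's $L^2$-ball radius to $\xi K\eps_{n,0}$ throughout, whereas the paper keeps $\xi K\eps_n(\lambda)$ in the first Minkowski summand of $\mathcal F_n(\lambda)$ while placing $\eps_{n,0}$ in $D_n$ (and uses the threshold $\delta_n$ rather than $\xi K\eps_{n,0}$ in the definition of $\mathcal F_n'(\lambda)$), which changes nothing substantive since $\delta_n\le\xi K\eps_{n,0}$.
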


\begin{proof}[Proof of Theorem~\ref{t:ebp.contr}]
We slightly adapt the prior mass and testing argument from the proof of Lemma~\ref{l:mmleSet} to accomodate $\lambda \in \Lambda_{n,0}$. For any such $\lambda$ we define
\[
	\mathcal F_n(\lambda) = \xi K \eps_n(\lambda) \bb B_1 + D_n \bb H_{\lambda, 1}
\]
where $D_n = -2\Phi^{-1}(e^{-c(\xi)n\eps_{n,0}^2})$. Here the concentration inequality is $\varphi_{0,\lambda}(\xi K \eps_n(\lambda)) \leq c(\xi) n\eps_{n,0}^2$, and the argument in the proof of Lemma~\ref{l:mmleSet} gives
\begin{equation}\label{e:ent.fast}
	\log N(\xi K \eps_n(\lambda), \mathcal F_n(\lambda), \|\cdot\|)
	\leq 6 c(\xi) n\eps_{n,0}^2
\end{equation}
and
\begin{equation}\label{e:lambdaPM.fast}
	\Pi_\lambda(\mathcal F_n(\lambda)^c) \leq e^{-n\eps_{n,0}^2}.
\end{equation}

For $\lambda \in \Lambda_{n,0}$ we construct a test $\phi_n(\lambda)$ for $f_0$ versus the set
\[
	\mathcal F_n'(\lambda) = \{ f \in \mathcal F_n(\lambda) 
	: \|f-f_0\| > M\eps_{n,0}, \|f^{>J_\gamma}\|_\infty \leq \delta_n \}.
\]

Here we note that any $f \in \mathcal F_n'(\lambda)$ with $\|f-f_i\| \leq 6\xi M \eps_n(\lambda)$ satisfies on $\mathcal B_n$, for $\xi = 1/90$ and $M \geq 90$,
\begin{align*}
\|f-f_i\|_n
& \leq \|(f-f_i)^{\leq J_\gamma}\|_n + \|f^{>J_\gamma}\|_\infty + \|f_i^{>J_\gamma}\|_\infty \\
& \leq 2 \|f-f_i\| + 2 \delta_n \\
& \leq 12 \xi M \eps_{n,0} + 2 \xi M \delta_n \\
& \leq \frac 16 M \eps_{n,0} - \delta_n \\
& \leq \frac 16 \|f_i-f_0\| - \frac 12(\|f_i^{>J_\gamma}\|_\infty + \|f_0^{>J_\gamma}\|_\infty) \\
& \leq \frac 16 \|(f_i-f_0)^{\leq J_\gamma}\| - \frac 13(\|f_i^{>J_\gamma}\|_\infty + \|f_0^{>J_\gamma}\|_\infty) \\
& \leq \frac 13 \|(f_i-f_0)^{\leq J_\gamma}\|_n - \frac 13 \|(f_i-f_0)^{>J_\gamma}\|_n \\
& \leq \frac 13 \|f_i-f_0\|_n.
\end{align*}
Covering the set $\mathcal F_n'(\lambda)$ with at most $\exp(6 c(\xi) n\eps_{n,0}^2)$ balls and taking $M$ sufficiently large, it follows that there exists a test $\phi_n(\lambda)$ with
\begin{equation}\label{e:test.fast.I}
	\bs 1_{\mathcal B_n}\bb E_{f_0}[\phi_n(\lambda) | \bs x] \leq e^{-n\eps_{n,0}^2}
\end{equation}
and
\begin{equation}\label{e:test.fast.II}
	\sup_{f \in \mathcal F_n'(\lambda)} \bs 1_{\mathcal B_n} \bb E_f[1-\phi_n(\lambda) | \bs x] 	\leq e^{-n\eps_{n,0}^2}.
\end{equation}

Let us define the test $\phi_n' = \max_{\lambda \in \Lambda_{n,0}} \phi_n(\lambda)$. Note that by \eqref{e:inLambda0} and \eqref{e:evidenceLB}, the event 
\begin{equation}\label{e:eventCn}
\mathcal C_n := \mathcal B_n \cap \{\hat\lambda_n \in \Lambda_{n,0}\} \cap \Big\{\int \frac{p_f^{(n)}}{p_{f_0}^{(n)}}(\bs x,\bs y) \,d\Pi_{\hat\lambda_n}(f) > \exp(-C_1 \min_{\lambda \in \Lambda_n} n\eps_n^2(\lambda)) \Big\}
\end{equation}
has probability tending to 1 (where $C_1 = 1 + 2K^2/\sigma^2$ as in the proof of Lemma~\ref{l:mmleSet}). Then, by Bayes' rule,
\begin{align}
\nonumber
& \bb P_{f_0}^{(n)} \Pi_{\hat\lambda_n} (\|f-f_0\| > M \eps_{n,0} | \bs x, \bs y) \\
\nonumber
& \leq \bb P_{f_0}^{(n)}(\mathcal C_n^c) + \bb E_{f_0} \bs 1_{\mathcal C_n} \phi_n' \\
\nonumber
& \phantom{\leq{}} + \bb E_{f_0} \bs 1_{\mathcal C_n} (1-\phi_n') \frac{\int_{\|f-f_0\| > M\eps_{n,0}} (p_f^{(n)}/p_{f_0}^{(n)})(\bs x,\bs y) \,d\Pi_{\hat\lambda_n}(f)}{\int(p_f^{(n)}/p_{f_0}^{(n)})(\bs x,\bs y)\,d\Pi_{\hat\lambda_n}(f)} \\
\nonumber
& \leq o(1) + \bb E_{f_0}\bs 1_{\mathcal C_n} \phi_n' \\
\label{e:posteriorBoundEB}
& \phantom{\leq{}} + e^{C_1 \min_{\lambda\in\Lambda_n} n\eps_n^2(\lambda)} \sum_{\lambda \in \Lambda_{n,0}} \bb E_{f_0} \bs 1_{\mathcal C_n} (1-\phi_n(\lambda)) \int_{\|f-f_0\| > M\eps_{n,0}} \frac{p_f^{(n)}}{p_{f_0}^{(n)}}(\bs x,\bs y) \,d\Pi_\lambda(f). 
\end{align}
The type I error can be bounded using \eqref{e:test.fast.I} as
\[
	\bb E_{f_0} \bs 1_{\mathcal C_n} \phi_n'
	\leq \sum_{\lambda \in \Lambda_{n,0}} \bb E \bs 1_{\mathcal B_n}
	\bb E_{f_0}[\phi_n(\lambda) | \bs x]
	\leq  |\Lambda_{n,0}|\exp(-n\eps_{n,0}^2),
\]
which vanishes due to the assumption on $|\Lambda_n| \geq |\Lambda_{n,0}|$. To bound the remainder in \eqref{e:posteriorBoundEB}, in view of \eqref{e:test.fast.II}, \eqref{e:lambdaPM.fast}, and condition \eqref{e:priortail} we obtain
\begin{align*}
& \bb E \int_{\mathcal F_n'(\lambda)} \bs 1_{\mathcal C_n} \bb E_f[1-\phi_n(\lambda)|\bs x] \,d\Pi_\lambda(f) + \Pi_\lambda(\mathcal F_n(\lambda)^c) + \Pi_\lambda(\|f^{>{J_\gamma}}\|_\infty > \delta_n) \\
& \leq 3\exp(-n\eps_{n,0}^2).
\end{align*}
Hence, the full last term in \eqref{e:posteriorBoundEB} is bounded by $3|\Lambda_n| \exp(-n\eps_{n,0}^2/2)=o(1)$ for $n$ large enough, concluding the proof of our statement.
\end{proof}

\subsection{Proof of Theorem \ref{t:hbp.contr}}\label{proof:t:hbp.contr}

The proof extends the results on the empirical Bayes posterior to the hierarchical Bayes method, see for instance \cite{szabo2013,knapik2016bayes,rousseau2017} for similar derivations. One of the key difference is the explicit exponential upper bound for the posterior contraction, needed to obtain frequentist guarantees for the variational approximation.

Let us split the probability to the cases when $\lambda\in \Lambda_{n,0}$ and $\lambda\in \Lambda_n \setminus \Lambda_{n,0}$, i.e.
\begin{multline}\label{e:hp.bound}
\Pi(\|f-f_0\| \geq M \eps_{n,0} | \bs x,\bs y) \leq \Pi(\lambda \in \Lambda_n\setminus\Lambda_{n,0} | \bs x,\bs y) \\
+ \sum_{\lambda \in \Lambda_{n,0}} \pi(\lambda | \bs x,\bs y) \Pi_\lambda(\|f-f_0\| \geq M \eps_{n,0} | \bs x,\bs y).
\end{multline}
We first bound the top term. By \eqref{e:evidenceLB0}, with the particular $\lambda_{n,0}$ such that $\eps_n(\lambda_{n,0}) = \min_\lambda \eps_n(\lambda)$, we have
\begin{equation}\label{e:elb.lambda0}
\bb P_{f_0}^{(n)}\Big( \int \frac{p_f^{(n)}}{p_{f_0}^{(n)}}(\bs x,\bs y) \,d\Pi_{\lambda_{n,0}}(f) > \exp(-C_1 \min_{\lambda \in \Lambda_n} n\eps_n^2(\lambda)) \Big) \to 1.
\end{equation}
Likewise, \eqref{e:evidenceUB} gives
\begin{equation}\label{e:eub.bad.lambda}
	\bb P_{f_0}^{(n)}\Big( \max_{\lambda \in \Lambda_n\setminus\Lambda_{n,0}} \int \frac{p_f^{(n)}}{p_{f_0}^{(n)}}(\bs x,\bs y) \,d\Pi_\lambda(f) \leq \exp(-n\eps_{n,0}^2/2)\Big) \to 1.
\end{equation}
On the intersection of the two events we have
\begin{align*}
\Pi(\lambda \in \Lambda_n\setminus\Lambda_{n,0} | \bs x,\bs y)
& \leq \frac{\sum_{\lambda \in \Lambda_n\setminus\Lambda_{n,0}}\pi(\lambda)\int (p_f^{(n)}/p_{f_0}^{(n)})(\bs x,\bs y) \,d\Pi_\lambda(f) }{\pi(\lambda_{n,0})\int (p_f^{(n)}/p_{f_0}^{(n)})(\bs x,\bs y) \,d\Pi_{\lambda_{n,0}}(f)} \\
& \leq \exp(-n\eps_{n,0}^2/2 - \log \pi(\lambda_{n,0}) + C_1n\eps_{n,0}^2/c_n^2) \\
& \leq \exp(-n\eps_{n,0}^2/3)
\end{align*}
for $n$ large enough, where we used Bayes' rule and the assumption \eqref{e:hyper.mass}.

Regarding the bottom term in \eqref{e:hp.bound}, the proof of Theorem~\ref{t:ebp.contr} gives $M>0$ such that
\[
	\max_{\lambda\in\Lambda_{n,0}} \bb E_{f_0} \Pi_\lambda(\|f-f_0\| \geq M \eps_{n,0} | \bs x,\bs y) \bs 1_{\mathcal C_n} 
	\leq 4 e^{-n\eps_{n,0}^2/2}
\]
for all $n$ large enough, where $\mathcal C_n$ is the event given in \eqref{e:eventCn}. Bounding each of the posterior probabilities $\pi(\lambda | \bs x,\bs y)$ by one, it follows that the final term in \eqref{e:hp.bound} is bounded from above by $4 |\Lambda_n| e^{-n\eps_{n,0}^2/2}$.

Taking $\mathcal A_n$ as the intersection of $\mathcal C_n$ with the events in \eqref{e:elb.lambda0} and \eqref{e:eub.bad.lambda}, we have $\bb P_{f_0}^{(n)}(\mathcal A_n) \to 1$ and
\[ \bb E_{f_0} \Pi(\|f-f_0\| \geq M \eps_{n,0} | \bs x,\bs y)\bs 1_{\mathcal A_n}  \leq 5 e^{-n\eps_{n,0}^2/3} \]
for $n$ large enough, which completes the proof.

\subsection{Proof of Theorem \ref{t:vbp.contr}}\label{proof:t:vbp.contr}
In view of Theorem~\ref{t:hbp.contr} there exists a $M>0$ such that
\begin{equation}\label{e:hier.contr}
	\bb E_{f_0} \Pi(\|f-f_0\| \geq M \eps_{n,0} | \bs x,\bs y) \bs 1_{\mathcal A_n}
	\lesssim \exp(-n\eps_{n,0}^2/3)
\end{equation}
for some sequence of events $\mathcal A_n$, with $\bb P_{f_0}^{(n)}(\mathcal A_n) \to 1$. We replace $\mathcal A_n$ by its intersection with the event
\begin{equation}\label{e:lambda.n.elb}
	\Big\{\int \frac{p_f^{(n)}}{p_{f_0}^{(n)}}(\bs x,\bs y) \,d\Pi_{\lambda_n}(f)
	\geq e^{-C_1n\eps_n^2(\lambda_n)}\Big\} \cap \Big\{\int \frac{p_f^{(n)}}{p_{f_0}^{(n)}}(\bs x,\bs y) \,d\Pi(f) \leq e^{n\eps_n^2(\lambda_n)} \Big\}.
\end{equation}
The resulting event still has probability tending to $1$ by \eqref{e:lambdaELB} and Markov's inequality. Then in view of Theorem 5 of \cite{ray2022},
\begin{multline}\label{e:var.post.c}
	\bb E_{f_0} \tilde \Pi_{\lambdavar}(\|f-f_0\| \geq M\eps_{n,0} |\bs x,\bs y) \lesssim \\
	\frac{1}{n\eps_{n,0}^2} \bb E_{f_0} \bs 1_{\mathcal A_n}
	\KL(\tilde\Pi_{\lambdavar}(\,\cdot\, |\bs x,\bs y) \| \Pi(\,\cdot\,| \bs x,\bs y)) + o(1).
\end{multline}

Note that for $Q$ as given in the statement of the theorem,
\begin{align*}
	\KL(\tilde \Pi_{\lambdavar}(\,\cdot\,|\bs x,\bs y) \| \Pi(\,\cdot\,|\bs x,\bs y))
	& \leq \KL(Q \| \Pi(\,\cdot\,|\bs x,\bs y)) \\
	& \leq \int \log\Big(\frac{1}{\pi(\lambda_n | \bs x,\bs y)} \frac{dQ}{d\Pi_{\lambda_n}
	(\,\cdot\,|\bs x,\bs y)} \Big) \,d Q \\
	& = \KL(Q \| \Pi_{\lambda_n}(\,\cdot\,|\bs x,\bs y)) - \log \pi(\lambda_n | \bs x,\bs y).
\end{align*}
Hence by assumption \eqref{e:vbcCondition} it follows that \eqref{e:var.post.c} tends to zero as soon as
\begin{equation}\label{e:post.pi}
	- \bb E_{f_0} \bs 1_{\mathcal A_n} \log \pi(\lambda_n | \bs x,\bs y) = o(n\eps_{n,0}^2).
\end{equation}
Bayes' rule gives
\[
	\pi(\lambda_n | \bs x,\bs y) = \frac
	{\pi(\lambda_n) \int (p_f^{(n)}/p_{f_0}^{(n)})(\bs x,\bs y) \,d\Pi_{\lambda_n}(f)}
	{\int (p_f^{(n)}/p_{f_0}^{(n)})(\bs x,\bs y) \,d\Pi(f)},
\]
so by assumption \eqref{e:hyper.mass} and since $\mathcal A_n$ includes the event \eqref{e:lambda.n.elb}, the left-hand side of \eqref{e:post.pi} is bounded by
\[ o(n\delta_n^2) + C_1 n\eps_n^2(\lambda_n) + n\eps_n^2(\lambda_n) = o(n\eps_{n,0}^2), \]
concluding the proof of the theorem.

\section{Proofs for the examples}\label{sec:proof:examples}
In this section we provide the proofs for the contraction rates for the specific choices of Gaussian process priors and variational approaches using Theorem~\ref{t:vbp.contr}. But before that we introduce some notations.

The reproducing kernel Hilbert space $\bb H_\lambda$ associated to a GP with series expansion \eqref{def:GP:prior} is the subspace of $L^2(\mathcal X, G)$ consisting of those functions $g$ such that 
\[ \sum_{j=1}^\infty |\ip{g,\varphi_j}|^2 / (s_j^\lambda)^2 < \infty. \]
This quantity is the squared RKHS norm $\|g\|_{\bb H_\lambda}^2$. 

In the examples below we compute $\bar\eps_n(\lambda)$ by solving the inequality $\varphi_{f_0,\lambda}(\bar\eps_n(\lambda)) \leq n\bar\eps_n(\lambda)^2$, where $\varphi_{f_0,\lambda}$ is the concentration function defined in \eqref{def:conc:funct}. By Lemma 5.3 in \cite{vaart2008b}, it follows that (for $K \geq 2$)
\[ \Pi_\lambda(\|f-f_0\| \leq K \bar\eps_n(\lambda)) \geq e^{-n\bar\eps_n^2(\lambda)}, \]
which implies $\bar\eps_n(\lambda)$ is an upper bound for $\eps_n(\lambda)$ (as defined in \eqref{e:eps.lambda}). Similarly, if $\underline\eps_n(\lambda)$ solves $\varphi_{0,\lambda}(K\underline\eps_n(\lambda)) \geq n\underline\eps_n^2(\lambda)$, it is a lower bound for $\eps_n(\lambda)$.  

In each corollary we verify the conditions of Theorem~\ref{t:vbp.contr}. First we show the tail condition \eqref{e:f0.tail} on the signal $f_0 \in \mathcal S^\beta(L)$ holds with $\delta_n = c_n n^{-\beta/(d+2\beta)}$ for sufficiently smooth signals. Note that for $\beta \geq d+2\gamma$  the Cauchy-Schwarz inequality gives
\begin{align*}
	\|f_0^{>J_\gamma}\|_\infty
	& \lesssim \sum_{j>J_\gamma} j^{\beta/d} |\ip{f_0,\varphi_j}| j^{(\gamma-\beta)/d} \\
	& \leq \Big(\sum_{j>J_\gamma} j^{2(\gamma-\beta)/d}
	\sum_{j>J_\gamma} j^{2\beta/d} \ip{f_0,\varphi_j}^2\Big)^{1/2} \\
	& = o(J_\gamma^{(\gamma-\beta)/d+1/2}) =o(\delta_n).
\end{align*}
Hence this lower bound for $\beta$ will always be assumed.

\subsection{Proof of Corollary \ref{cor:poly}}\label{sec:cor:poly}
We verify the remaining conditions of Theorem~\ref{t:vbp.contr}. Here the smoothness parameter $\lambda = \alpha$ is selected empirically. First note that in view of Corollary 4.3 in \cite{dunker1998} there exists a constant $C > 0$ independent of $\alpha\in \Lambda^{(n)}$ such that for $\eps>0$ small enough
\begin{equation}\label{e:poly.sb}
	C^{-1} \eps^{-d/\alpha} \leq -\log \Pi_\alpha(\|f\|\leq\eps) = \varphi_{0,\alpha}(\eps) \leq C \eps^{-d/\alpha}.
\end{equation}
Recall that we have assumed $f_0 \in \mathcal S^\beta(L)$ so $\sum j^{2\beta/d}\ip{f_0,\varphi_j}^2 \leq L^2$. Letting $h = \sum_{j \leq J} \ip{f_0,\varphi_j} \varphi_j \in \bb H_\alpha$, note that
\[
	\|h-f_0\| = \Big(\sum_{j>J} j^{-2\beta/d} j^{2\beta/d} \ip{f_0,\varphi_j}^2\Big)^{1/2} \leq J^{-\beta/d} L.
\]
With $J = \lceil (L/\eps)^{d/\beta} \rceil$ it follows that for all sufficiently small $\eps>0$
\[
	\inf_{\substack{h \in \bb H_\alpha : \\ \|h-f_0\|\leq\eps}} \|h\|_{\bb H_\alpha}^2
	\leq \sum_{j \leq J} j^{1+2\alpha/d-2\beta/d} j^{2\beta/d} \ip{f_0,\varphi_j}^2
	\leq 2 (1 \vee J)^{1+2\alpha/d-2\beta/d} L^2,
\]
so the concentration function can be bounded for $f_0 \in \mathcal S^\beta(L)$ as
\[
	\varphi_{f_0,\alpha}(\eps) \lesssim 1 \vee \eps^{(2\beta-2\alpha-d)/\beta} + \eps^{-d/\alpha},
\]
which implies $\eps_n(\alpha) \lesssim n^{-(\alpha \wedge \beta)/(d+2\alpha)}$. The $\alpha \in \Lambda^{(n)}$ that maximises $(\beta\wedge\alpha)/(d+2\alpha)$ has at most distance $1/\log n$ to $\beta$, so it satisfies
\[
	\frac{\alpha\wedge\beta}{d+2\alpha} \geq \frac{\beta-1/\log n}{d+2\beta}
	\wedge \frac{\beta}{d+2(\beta+1/\log n)} = \frac{\beta-1/\log n}{d+2\beta}.
\]
We take $\lambda_n$ to be this particular $\alpha$. Consequently,
\[
	\min_{\alpha \in \Lambda^{(n)}} \eps_n(\alpha) \lesssim
	\min_{\alpha \in \Lambda^{(n)}} n^{-(\alpha \wedge \beta)/(d+2\alpha)}
	\leq e n^{-\beta/(d+2\beta)}.
\]
Since $\delta_n = c_n n^{-\beta/(d+2\beta)}$ it follows that the contraction rate is
\[ \eps_{n,0} \lesssim c_n n^{-\beta/(d+2\beta)} \]
as soon as we verify the remaining conditions of Theorem~\ref{t:vbp.contr} for the current choice of prior and variational scheme.

The lower bound in \eqref{e:poly.sb} also gives $\varphi_{0,\lambda}(K\underline \eps_n(\alpha)) \geq n\underline\eps_n^2(\alpha)$ for $\underline\eps_n(\alpha)$ a small multiple of $n^{-\alpha/(d+2\alpha)}$, and it follows that $\min_{\alpha\in\Lambda_n} n\eps_n^2(\alpha)\to\infty$.

Now let us consider condition \eqref{e:priortail}. We use a technique similar to the proof of Lemma 7 in \cite{randrianarisoa2023}. We have already bounded $\eps_{n,0} \lesssim c_n n^{-\beta/(d+2\beta)}$ hence $n\eps_{n,0}^2 \lesssim c_n^2 n^{d/(d+2\beta)}$. Since $J_\gamma = (n/\log^2 n)^{d/(d+2\gamma)}$ and $\beta > \gamma$ it follows that for $c_n\to\infty$ not too fast, $n\eps_{n,0}^2 = o(J_\gamma)$. For such a $c_n$, by the assumptions on $f = \sum_{j=1}^\infty s_j^\alpha Z_j \varphi_j$, there exists $C>0$ such that (for $n$ large enough)
\begin{align*}
	\|f^{>J_\gamma}\|_\infty
	& \leq C \sum_{j>J_\gamma} j^{\gamma/d-\alpha/d-1/2} |Z_j| \\
	& \leq C \sum_{\substack{k \in \bb N: \\ k \ge J_\gamma/(n\eps_{n,0}^2)}} (kn\eps_{n,0}^2)^{\gamma/d-\alpha/d-1/2} \sum_{\substack{j \in \bb N: \\ kn\eps_{n,0}^2 < j \le (k+1)n\eps_{n,0}^2}} |Z_j|.
\end{align*}
Therefore, by the Chernoff bound with $t>0$, since $\alpha > \gamma+d/2$, it holds that for some $C'>0$
\begin{align*}
	\Pi_\alpha(\|f^{>J_\gamma}\|_\infty \ge \delta_n)
	& \leq e^{-t\delta_n} \bb E \exp\Big(tC\sum_{k \geq J_\gamma/(n\eps_{n,0}^2)}
	(kn\eps_{n,0}^2)^{\gamma/d-\alpha/d-1/2} \sum_{j=1}^{\lceil n\eps_{n,0}^2 \rceil}|Z_j|\Big) \\
	& \leq e^{-t\delta_n} \Big(\bb E \exp(tC' (n\eps_{n,0}^2)^{-1}J_\gamma^{\gamma/d-\alpha/d+1/2} |Z_1|)\Big)^{\lceil n\eps_{n,0}^2 \rceil}.
\end{align*}
Using $\bb E e^{s|Z_1|} \leq 2 \bb E e^{s Z_1} = 2 e^{s^2/2}$ it follows that for some $C''>0$,
\[
	\Pi_\alpha(\|f^{>J_\gamma}\|_\infty \ge \delta_n) \leq \exp(-t\delta_n + \log(2)n\eps_{n,0}^2 + t^2 C'' (n\eps_{n,0}^2)^{-1} J_\gamma^{2\gamma/d-2\alpha/d+1}).
\]
Optimising over $t$, it follows that the term in the exponent is of the order
\[ - n\eps_{n,0}^2 \cdot \delta_n^2 J_\gamma^{2\alpha/d-2\gamma/d-1} \]
where $\delta_n^2 J_\gamma^{2\alpha/d-2\gamma/d-1} \to \infty$ since $\alpha > d+2\gamma$. Hence \eqref{e:priortail} is satisfied for $c_n$ as given.

Next, by \eqref{e:poly.sb}, for arbitrary $\xi\in(0,1)$ and $\eps>0$ small enough
\begin{equation}\label{e:poly.small.ball}
-\log\Pi_\alpha(\|f\|\leq \xi \eps) \leq - C^{2}\xi^{-d/\alpha} \log\Pi_\alpha(\|f\|\leq\eps),
\end{equation}
verifying condition \eqref{e:prior.reg.sc} for $\alpha \in [\beta^-,\beta^+]$. 

Condition \eqref{e:hyper.mass} is satisfied because the hyper-prior is uniform on a set of cardinality $|\Lambda_n| \leq |\beta^+-\beta^-| \log n$. 

~

It remains to verify the condition \eqref{e:vbcCondition} on the KL-divergence. Recall that $\overline\eps_n(\lambda)$ is an upper bound for $\eps_n(\lambda)$ computed by solving the concentration inequality $\varphi_{f_0,\lambda}(\overline\eps_n(\lambda)) \leq n\overline\eps_n^2(\lambda)$. Consider a $\lambda_n \in \Lambda_n$ which is at most distance $1/\log n$ away from $\beta$. It was already shown above that $\eps_n(\lambda_n) \leq \overline \eps_n(\lambda_n) \lesssim n^{-\beta/(d+2\beta)}$. The concentration inequality implies that there exists $h \in \bb H_{\lambda_n}$ such that $\|f_0-h\| \leq \overline\eps_n(\lambda_n)$ and $\|h\|_{\bb H_{\lambda_n}}^2 \leq n\overline\eps_n^2(\lambda_n)$. Combining this with Lemma 3 in \cite{vbgp} it follows that
\begin{multline*}
	\bb E_{f_0} \KL(\tilde\Pi_{\lambda_n}(\,\cdot\,|\bs x,\bs y) \|
	\Pi_{\lambda_n}(\,\cdot\,|\bs x,\bs y)) \lesssim \\
	(1 + \bb E_{\bs x}\|K_\ff-Q_\ff\|_{\mathrm{op}})n\bar\eps_n^2(\lambda_n)
	 + \bb E_{\bs x} \tr(K_\ff-Q_\ff).
\end{multline*}
Hence it suffices to show that
\begin{equation}\label{eq:opnorm}
	\bb E_{\bs x} \|K_\ff - Q_\ff\|_{\mathrm{op}} \lesssim 1
\end{equation}
and
\begin{equation}\label{eq:trace}
	\bb E_{\bs x} \tr(K_\ff-Q_\ff) \lesssim n \cdot n^{-2\beta/(d+2\beta)} = n^{d/(d+2\beta)}.
\end{equation}

In view of Lemma 4 and 5 of \cite{vbgp} the bounds \eqref{eq:opnorm} and \eqref{eq:trace} are satisfied for the variational classes studied here with polynomially decaying prior eigenvalues and number of spectral features exceeding $m \geq n^{d/(d+2\lambda_n)} \asymp n^{d/(d+2\beta)}$. However, these bounds were shown under the assumption that the eigenfunctions of the prior are bounded by a constant. This means that for the population spectral features we have to slightly adapt the argument, since the assumption on the eigenfunctions is weakened to \eqref{e:eigBound} in the present paper. For the empirical features the result does not rely on this bound and is hereby proved.

For the population spectral features, Hoeffding's lemma gives
\begin{equation}\label{e:hoef.induc}
	\bb P_x\Big( |\bs\varphi_\ell^\T \bs\varphi_k - n\delta_{\ell k}| \geq C \sqrt{n^{1+2\gamma/\alpha}\log n} \Big)
	\leq 2 \exp\Big(\frac{-2C^2 n^{1+2\gamma/\alpha}\log n}{n (2C_\varphi^2(\ell k)^{\gamma/d})^2} \Big),
\end{equation}
hence by a union bound,
\[
	\bb P_x\Big( \max_{\ell,k \leq n^{d/(2\alpha)}}
	|\bs\varphi_\ell^\T \bs\varphi_k - n\delta_{\ell k}|
	\geq C \sqrt{n^{1+2\gamma/\alpha}\log n} \Big)
	\leq 2 n^{d/\alpha} n^{-C^2/(2C_\varphi^4)}.
\]
Correspondingly, for $\alpha > d+2\gamma$, the proof of Lemma 5 in \cite{vbgp} gives
\[
	\bb E_{\bs x} \|K_\ff - Q_\ff\|_{\mathrm{op}}
	\lesssim 1 + n m^{-1-2\alpha/d} + n^{d/(2\alpha) + 2\gamma/\alpha}m^{-2\alpha/d}\log n,
\]
and
\[
	\bb E_{\bs x} \tr(K_\ff-Q_\ff) \lesssim nm^{-2\alpha/d}.
\]
Taking $\alpha = \lambda_n$, the element of $\Lambda_n$ closest to $\beta$, we recall that $\eps_n(\lambda_n) \lesssim n^{-2\beta/(d+2\beta)}$. For $m \geq n^{d/(d+2\alpha)}$ and $\alpha > d+2\gamma$ it follows that \eqref{eq:opnorm} and \eqref{eq:trace} are satisfied. Hence for the population spectral features the condition on the KL-divergence in Theorem~\ref{t:vbp.contr} is satisfied which gives
\[ \bb E_{f_0} \tilde\Pi_{\hat\alpha_n^{\mathrm{var}}}(\|f-f_0\| \geq M\eps_{n,0} | \bs x,\bs y) \to 0, \]
thereby concluding the proof.

\subsection{Proof of Corollary \ref{cor:exp}}\label{sec:cor:exp}

We first determine an upper bound for $\eps_n(\tau)$. In view of Lemma \ref{l:tauRKHS} and \ref{l:tauSB} given below, for $f_0 \in \mathcal S^\beta(L)$, the concentration function \eqref{def:conc:funct} is bounded from above by  
\[
	\varphi_{f_0,\tau}(\eps) \leq L^2 \tau^{-d} e^{\tau} + \tau^{-d} \eps^2 \exp(2\tau(L/\eps)^{1/\beta}) + C \tau^{-d} (\log \tfrac1\eps)^{d+1}
\]
for some universal constant $C>0$. If $\eps \geq 2Le^{\tau/2}/\sqrt{n\tau^d}$ then
\[
	L^2 \tau^{-d}e^\tau \leq n\eps^2/3.
\]
Taking $\eps \geq D (\tau/\log n)^\beta$ for sufficiently large constant $D>0$, in view of $\tau^{-d}=O(n^{\frac{d}{d+2\beta^{-}}})$, we have for any $\tau \in \Lambda_n$
\[
	\tau^{-d} \eps^2 \exp(2\tau(L/\eps)^{1/\beta}) \leq n\eps^2/3,
\]
while for $\eps \geq D' (n\tau^d)^{-1/2} (\log n)^{(d+1)/2}$ with $D'>0$ large enough, it follows that
\[
	C \tau^{-d}(\log \tfrac 1\eps)^{d+1} \leq n\eps^2/3.
\]
Hence the concentration function inequality $\varphi_{f_0,\tau}(\eps)\leq n\eps^2$ is solved by $\eps$ equal to the maximum of the three preceding lower bounds, which gives
\[
	\eps_n(\tau) \lesssim e^{\tau/2}/\sqrt{n\tau^d} \vee (\tau / \log n)^\beta \vee (n\tau^d)^{-1/2}(\log n)^{(d+1)/2}.
\]
The expression on the right is minimised for
\begin{equation}\label{e:opt.tau}
	\tau = c n^{-1/(d+2\beta)} (\log n)^{1+1/(d+2\beta)},
\end{equation}
(over the set $\Lambda_n$ we obtain this number up to at most a factor $e$) hence
\[
	\eps_{n,0} \lesssim c_n (n/\log n)^{-\beta/(d+2\beta)}.
\]

As before any solution to the reverse concentration inequality $\varphi_{0,\tau}(K\underline\eps_n) \geq n\underline\eps_n^2$ gives a lower bound for $\eps_n(\tau)$. In this case Lemma~\ref{l:tauSB} shows that $\underline \eps_n$ equal to a small multiple of $\sqrt{(\log n)/n}$ works for any $\tau \in (0,1)$. It follows that $\min_{\tau\in\Lambda_n} n\eps_n^2(\tau) \geq \log n \to \infty$. 

To verify condition \eqref{e:priortail}, we show first that the exponentially decaying coefficients $s_j^\tau$ are bounded up to a constant by the polynomially decaying coefficients from the other example. A linear approximation of the function $x \mapsto e^{\tau x/(d+2\alpha)}$ at $x = J_\gamma^{1/d}/2$ gives
\begin{align*}
	e^{\tau j^{1/d}/(d+2\alpha)}
	& \geq \frac{\tau}{2d+4\alpha}e^{\tau J_\gamma^{1/d}/(2d+4\alpha)}(j^{1/d}-J_\gamma^{1/d}/2)
	 + e^{\tau J_\gamma^{1/d}/(2d+4\alpha)} \\
	& \geq \frac{\tau}{4d+8\alpha} e^{\tau J_\gamma^{1/d}/(2d+4\alpha)} j^{1/d}
\end{align*}
for $j \geq J_\gamma$ and $\alpha>0$. Recalling that $J_\gamma = (n/(\log n)^2)^{d/(d+2\gamma)}$ and $\tau \geq \min \Lambda_n = n^{-1/(d+2\beta^-)}$ where $\beta^- > d+2\gamma$, it follows that the term $\tau J_\gamma^{1/d}$ in the exponent is bounded from below by $n^{c}$, for some $c>0$. Therefore the term $\frac{\tau}{4d+8\alpha} e^{\tau J_\gamma^{1/d}/(2d+4\alpha)}$ is uniformly bounded away from 0 for $\tau\in \Lambda_n$, $n\in\mathbb{N}$, and the preceding display implies that there exists a positive constant $C$ such that 
\[ e^{-\tau j^{1/d}} \leq C j^{-1-2\alpha/d} \]
for all $n \in \bb N$, $j \geq J_\gamma$ and $\tau \in \Lambda_n$. Hence the tail bound \eqref{e:priortail} follows from its counterpart for polynomially decaying eigenvalues in the proof of Corollary~\ref{cor:poly}. 

Lemma~\ref{l:tauSB} also shows that there exists $C>0$ such that for arbitrary fixed $\xi \in (0,1)$ and $\eps \in (0,\xi)$,
\begin{align*}
 -\log\Pi_\tau(\|f\| \leq \xi\eps)\leq -2^{d+1} C^{2}\log \Pi_\tau(\|f\| \leq \eps),
\end{align*}
verifying condition \eqref{e:prior.reg.sc} for $\tau\in(0,1)$.

Condition \eqref{e:hyper.mass} is also satisfied because the hyper-prior is uniform on a set with cardinality of the order $\log n$.  

Again the proof is concluded by bounding the KL-divergence between the variational posterior and the true posterior. In this case we note that for
\begin{align*}
	\tau & = \lambda_n \asymp n^{-1/(d+2\beta)} (\log n)^{1+1/(d+2\beta)}, \\
	m & = m_{n,\tau} \geq \tau^{-d} (\log n)^{d+1}
\end{align*}
we have
\begin{align*} 
	\bb E_{\bs x} \tr(K_\ff-Q_\ff)
	& \lesssim n \sum_{j>m} (s_j^\tau)^2 
	 \lesssim n \int_m^\infty \tau^d e^{-\tau x^{1/d}} \,\mathrm dx \\
	& \lesssim n \int_{m^{1/d}}^\infty \tau^d d u^{d-1} e^{-\tau u} \,\mathrm du 
	 \lesssim n (\tau m^{1/d})^{d-1} e^{-\tau m^{1/d}}.
\end{align*}
Since $x \mapsto x^{d-1} e^{-x}$ is decreasing on $(d-1,\infty)$ it follows that
\[
	\bb E_{\bs x} \|K_\ff-Q_\ff\|_{\mathrm{op}} \leq \bb E_{\bs x} \tr(K_\ff-Q_\ff)
	\lesssim n (\log n)^{(1+1/d)(d-1)} e^{-(\log n)^{1+1/d}}.
\]
Consequently, both the trace term and operator norm term decay faster than any power of $n$, hence Lemma 3 in \cite{vbgp} gives
\[
	\bb E_{f_0} \KL(\tilde\Pi_{\lambda_n}(\,\cdot\,|\bs x,\bs y) \|
	\Pi_{\lambda_n}(\,\cdot\,|\bs x,\bs y)) \lesssim n\overline\eps_n^2(\lambda_n) \asymp n(n/\log n)^{-2\beta/(d+2\beta)}.
\]
We slightly enlarge $\delta_n$ to $c_n(n/\log n)^{-\beta/(d+2\beta)}$. Then all conditions in Theorem~\ref{t:vbp.contr} are (still) satisfied and $\eps_{n,0} \lesssim \delta_n$, concluding the proof. 

~

\subsection{Proof of Corollary \ref{cor:dim}}\label{sec:cor:dim}

We apply Theorem~\ref{t:vbp.contr}. In view of Lemma \ref{l:dimRKHS} and \ref{l:dimSB}, for $C>0$ large enough
\[
	\overline \eps_n(D) = C (\sqrt{D n^{-1} \log n} \vee D^{-\beta/d})
\]
solves the concentration inequality $\varphi_{f_0,\lambda}(\overline\eps_n(D)) \leq n\overline\eps_n^2(D)$ for $n$ large enough. This provides an upper bound for the oracle rate $\eps_n(D)$. The value of $D$ that minimises the upper bound is of the order $(n/\log n)^{d/(d+2\beta)}$ and hence
\[
	\min_{D\in\Lambda_n} \eps_n(D) \lesssim (n/\log n)^{-\beta/(d+2\beta)}.
\]

The lower bound on the small ball exponent in Lemma~\ref{l:dimSB} gives a lower bound for $\eps_n(D)$. Solving $\varphi_{0,D}(\underline\eps_n(D)) \geq n\underline\eps_n^2(D)$ it follows that $\eps_n(D) \gtrsim (D n^{-1} \log n)^{1/2}$, hence $\min_{D\in\Lambda_n} n\eps_n^2(D) \to \infty$. 

Next, for $J_0 = n/(\log n)^2$
\[
	\|f_0^{>J_0}\|_\infty
	\leq \sum_{j>J_0} j^{-2\beta/d} j^{2\beta/d} |\ip{f_0,\varphi_j}| \\
	= o(J_0^{-2\beta/d}) = o(n^{-1})
\]
so condition \eqref{e:f0.tail} is satisfied for $\delta_n = n^{-\beta/(d+2\beta)}$. Since $J_0 > D$ for any $D \in \Lambda_n$, the right-hand side of \eqref{e:priortail} is always zero, so this condition is trivially satisfied. 

By Lemma~\ref{l:dimSB} with $C,\eps_0>0$ as given there, it follows that for $\eps \in (0,\xi\wedge\eps_0)$
\[ -\log\Pi_D(\|f\| \leq \xi \eps) \leq CD \log \frac{1}{\eps^2} \leq -2C^2\log\Pi_D(\|f\| \leq \eps), \]
which verifies the condition \eqref{e:prior.reg.sc}. 

Since $\pi$ is uniform it follows that
\[
	-\log\pi(D) = \log \lfloor \sqrt n \rfloor \leq \frac 12 \log n
\]
which shows \eqref{e:hyper.mass} holds. 

It remains to bound the KL-divergence. For $D$ as above, take the distribution $Q = \mathcal N(\mu,\Sigma)$, where $\Sigma \in \bb R^{D\times D}$ a diagonal matrix with entries $(D + n/\sigma^2)^{-1}$ and $\mu = \sigma^{-2} \Sigma\Phi^\T \bs y$. For convenience let us write $\hat\Sigma = (DI + \sigma^{-2}\Phi^\T \Phi)^{-1}$ and $\hat\mu = \sigma^{-2}\hat\Sigma\Phi^\T\bs y$. Using the exact form of $\Pi_D(\,\cdot\,|\bs x,\bs y)$ it can be seen that
\begin{equation}\label{e:kl.dim}
	\KL(Q \| \Pi_D(\,\cdot\,|\bs x,\bs y))
	= \frac 12 \Big( \log \frac{|\hat\Sigma|}{|\Sigma|} - D
	    + \tr(\Sigma\hat\Sigma^{-1}) + (\mu-\hat\mu)^\T\hat\Sigma^{-1}(\mu-\hat\mu)\Big).
\end{equation}
In expectation the trace term cancels out $-D$ so we have to bound the expectation of the logarithmic term and the quadratic form in this display. 

Akin to \eqref{e:hoef.induc}, by Hoeffding's inequality, note that 
\[
	\bb P_{f_0}^{(n)} \Big(\max_{j,\ell \leq D} \Big|\sum_{i=1}^n \varphi_\ell(x_i)\varphi_j(x_i)
	- n\delta_{j\ell}\Big|\geq C \sqrt{n\log n}\Big)\leq 2D^2n^{-C^2/(2C_\varphi^2)}.
\]
By increasing $C>0$ the right hand side is bounded by an arbitrary negative power of $n$. Let us define the complementary event
\[
	\mathcal H = \Big\{ \max_{j,\ell \leq D}  \Big|\sum_{i=1}^n \varphi_\ell(x_i)\varphi_j(x_i)
	- n\delta_{j\ell}\Big| < C\sqrt{n\log n} \Big\}.
\]
Now by Gershgorin's Circle Theorem, on the event $\mathcal H$, any eigenvalue $\nu$ of the matrix $\Phi^\T\Phi$ satisfies for some $j$ between $1$ and $D$
\begin{align*}
	|\nu - n|
	&\leq \Big|\nu - \sum_{i=1}^n \varphi_j(x_i)^2\Big|
	+ \Big|\sum_{i=1}^n \varphi_j(x_i)^2 - n\Big| \\
	&\leq \sum_{\ell \neq j}\Big|\sum_{i=1}^n \varphi_\ell(x_i)\varphi_j(x_i)\Big|
	+ \Big|\sum_{i=1}^n \varphi_j(x_i)^2 - n\Big| \\
	&=\sum_{\ell=1}^D \Big|\sum_{i=1}^n \varphi_\ell(x_i)\varphi_j(x_i)-n\delta_{\ell j}\Big| \\
	& \leq C D \sqrt{n \log n}.
\end{align*}
For the optimal $D$ of the order $(n/\log n)^{d/(d+2\beta)}$, it follows from the assumption $\beta >d/2$ that the upper bound is $o(n)$ and 
\[
	\bs 1_{\mathcal H} \log\frac{|\hat\Sigma|}{|\Sigma|} 
	\leq D \log \frac{D+\sigma^{-2}n}{D+\sigma^{-2}n - CD\sqrt{n\log n}}
	= o(D) = o(n\eps_n^2(D)).
\]

To bound the quadratic term in \eqref{e:kl.dim} we need a slightly tighter bound on the largest eigenvalue of the difference
\[
	\tilde\Sigma^{-1} - \Sigma^{-1} = \Phi^\T\Phi - n I = \sum_{i=1}^n (\varphi_{1:D}(x_i)\varphi_{1:D}(x_i)^\T - I)
\]
for the same optimal value of $D$. We bound the largest eigenvalue of this difference by the Bernstein inequality for matrices. By e.g. Theorem 1.6 in \cite{tropp2012},
\begin{equation}\label{e:bernstein}
	\bb P_{f_0}^{(n)} (\| \Phi^T\Phi-nI \|_{\mathrm{op}} \geq t)
	\leq D\exp\Big(-\frac{t^2/2}{S+Rt/3}\Big)
\end{equation}
where $R$ is a deterministic upper bound for the largest absolute eigenvalue of the matrix $\varphi_{1:D}(x_i)\varphi_{1:D}(x_i)^\T - I_{\mathrm{op}}$ and
\begin{align*}
	S &=\Big\| \sum_{i=1}^n \bb E_{f_0} (\varphi_{1:D}(x_i)\varphi_{1:D}(x_i)^\T - I)^2 \Big \|_{\mathrm{op}} \\
	& \leq n \Big\| \bb E_{f_0}\Big(\varphi_{1:D}(x_i)[\varphi_{1:D}(x_i)^\T\varphi_{1:D}(x_i)]\varphi_{1:D}(x_i)^\T\Big)\Big\|_{\mathrm{op}} \\
	& \leq n C_\varphi^2 D \Big\| \bb E_{f_0} \varphi_{1:D}(x_i)\varphi_{1:D}(x_i)^\T \Big\|_{\mathrm{op}} \\
	& = C_\varphi^2 D n
\end{align*}
as follows from the inequality
\[
\varphi_{1:D}(x_i)^\T\varphi_{1:D}(x_i) = \sum_{j=1}^D \varphi_j(x_i)^2 \leq C_\varphi^2 D.
\]
The above display also implies that we can let $R = C_\varphi^2 D$. For $t = C \sqrt{D n \log n}$ it follows that the term $Rt$ is dominated by the upper bound $C_\varphi^2 Dn$ for $S$, so for $n$ large enough
\[
	-\frac{t^2/2}{S+Rt/3} \leq -\frac{C^2 D n\log n}{4C_\varphi^2 Dn} = - \frac{C^2}{2C_\varphi^2} \log n. 
\]
By increasing the constant $C$, the probability in \eqref{e:bernstein} is bounded by an arbitrary negative power of $n$, hence the complementary event
\[
	\mathcal B := \Big\{ \|\Phi^T\Phi - n I\|_{\mathrm{op}} \leq C \sqrt{Dn\log n} \Big\}
\]
has probability tending to $1$. 

We are finally ready to bound the term
\[
	(\mu-\hat\mu)^\T\hat\Sigma^{-1}(\mu-\hat\mu)
	= \sigma^{-4}\bs y^\T\Phi(\hat\Sigma-\Sigma)\hat\Sigma^{-1}(\hat\Sigma-\Sigma)\Phi^\T\bs y.
\]
The Hoeffding bound shows that on the event $\mathcal H$ the eigenvalues of $\hat\Sigma^{-1} = DI + \sigma^{-2} \Phi^\T\Phi$ are bounded by $D + \sigma^{-2} n \pm C D \sqrt{n \log n} \asymp n$. Consequently, combined with the Bernstein bound for $\tilde\Sigma^{-1} - \Sigma^{-1} = \Phi^\T\Phi - n I$, we obtain
\begin{align*}
	\|(\hat\Sigma-\Sigma)\hat\Sigma^{-1}(\hat\Sigma-\Sigma)\|
	& = \|\Sigma(\hat\Sigma^{-1}-\Sigma^{-1})\hat\Sigma(\hat\Sigma^{-1}-\Sigma^{-1})\Sigma\| \\
	& \lesssim \frac{1}{n^3} D n \log n ,
\end{align*}
and it follows that
\begin{equation}\label{e:kl.quad}
	\bb E_{f_0} (\mu-\hat\mu)^\T\hat\Sigma^{-1}(\mu-\hat\mu)\bs 1_{\mathcal H \cap \mathcal B} \lesssim \frac{D \log n}{n^2} \bb E_{f_0} \bs y^\T\Phi\Phi^\T \bs y.
\end{equation}
The term on the right can be expanded as
\[
	\bb E_{f_0} \bs y^\T \Phi\Phi^\T \bs y
	=\sum_{i=1}^n \sum_{j=1}^n \sum_{\ell=1}^D
	\bb E_{f_0} y_i \varphi_\ell(x_i)\varphi_\ell(x_j) y_j.
\]
For $i \neq j$ the terms in the sum equal $\ip{f_0,\varphi_\ell}^2$. For $i=j$, we note that by the Cauchy-Schwarz inequality,
\begin{align*}
	|f_0(x)|^2
	& = \Big(\sum_{j=1}^\infty j^{\beta/d} \ip{f_0,\varphi_j} j^{-\beta/d} \varphi_j(x)\Big)^2 \\
	& \leq \Big(\sum_{j=1}^\infty j^{2\beta/d} \ip{f_0,\varphi_j}^2\Big)\Big(\sum_{j=1}^\infty C_\varphi^2 j^{-2\beta/d}\Big)
\end{align*}
so $f_0$ is bounded due to the assumptions on its Sobolev smoothness and the uniform bound on $\varphi_j$. This implies that for $i=j$ 
\[
	\bb E_{f_0} y_i \varphi_\ell(x_i)\varphi_\ell(x_j) y_j
	= \sigma^2 \bb E_{f_0}\varphi_\ell(x_i)^2 + \bb E_{f_0} f_0(x_i)^2 \varphi_j(x_i)^2
	\lesssim 1.
\]
Altogether we obtain
\[
	\bb E_{f_0} \bs y^\T \Phi\Phi^\T \bs y \lesssim n^2,
\]
so recalling \eqref{e:kl.quad} it follows that
\begin{align*}
	\bb E_{f_0} (\mu-\hat\mu)^\T\hat\Sigma^{-1}(\mu-\hat\mu)\bs 1_{\mathcal H \cap \mathcal B} 
	& \lesssim D \log n \\
	& \lesssim n^{d/(d+2\beta)} (\log n)^{-2\beta/(d+2\beta)} \\
	& = n\eps_n^2(D),
\end{align*}
completing the proof of the corollary.

\section{Technical lemmas}\label{sec:tech:lem}
The following lemma is a standard testing result using empirical $L_2$-norm. Below we provide a version with explicit constants and for completeness provide a proof as well. 

\begin{lemma}\label{l:emp.test}
For any $f_0,f_1 \in L^2(\mathcal X, G)$, there exists a sequence of tests $\phi_n$ satisfying
\begin{align*}
\bb E_{f_0}[\phi_n|\bs x]
&\leq \exp(-n\|f_1-f_0\|_n^2/(8\sigma^2)) \\
\sup_{f : \|f-f_1\|_n \leq \|f_1-f_0\|_n/3} \bb E_f [1-\phi_n|\bs x] &\leq \exp(-n\|f_1-f_0\|_n^2/(72\sigma^2)).
\end{align*}
\end{lemma}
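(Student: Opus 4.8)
The plan is to condition on the design $\bs x$ and reduce to a finite-dimensional Gaussian testing problem, where a single linear (Neyman--Pearson type) test suffices. Conditionally on $\bs x$, the observation vector $\bs y$ is $\mathcal N(\bs f, \sigma^2\bb I_n)$-distributed under $f$, where $\bs f := (f(x_1),\dots,f(x_n))$, and in this notation $\|g\|_n^2 = \|\bs g\|_2^2/n$. If $\|f_1-f_0\|_n = 0$ both asserted bounds exceed $1$, so one may take $\phi_n \equiv 0$; hence assume $\|f_1-f_0\|_n > 0$ and set the unit vector $v := (\bs f_1 - \bs f_0)/\|\bs f_1 - \bs f_0\|_2 \in \bb R^n$. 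The test I would use is $\phi_n := \bs 1_{\{T > 0\}}$ with the linear statistic $T := v^\T\bigl(\bs y - \tfrac12(\bs f_0 + \bs f_1)\bigr)$, which rejects $f_0$ in favour of $f_1$ along the direction separating them at the design points.

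For the type I error, under $f_0$ the statistic $T$ is, conditionally on $\bs x$, Gaussian with variance $\sigma^2$ and mean $v^\T\bigl(\bs f_0 - \tfrac12(\bs f_0+\bs f_1)\bigr) = -\tfrac12\|\bs f_1 - \bs f_0\|_2$; the Gaussian tail bound $\bb P(Z > t) \le e^{-t^2/2}$ (for $Z$ standard normal and $t \ge 0$) then gives $\bb E_{f_0}[\phi_n \mid \bs x] \le \exp(-\|\bs f_1-\bs f_0\|_2^2/(8\sigma^2)) = \exp(-n\|f_1-f_0\|_n^2/(8\sigma^2))$. For the type II error, fix $f$ with $\|f-f_1\|_n \le \tfrac13\|f_1-f_0\|_n$; then $T$ is Gaussian with variance $\sigma^2$ and mean $v^\T(\bs f - \bs f_1) + \tfrac12\|\bs f_1 - \bs f_0\|_2$, and by Cauchy--Schwarz $|v^\T(\bs f - \bs f_1)| \le \|\bs f - \bs f_1\|_2 \le \tfrac13\|\bs f_1-\bs f_0\|_2$, so this mean is at least $\tfrac16\|\bs f_1-\bs f_0\|_2$. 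The same Gaussian tail bound yields $\bb E_f[1-\phi_n \mid \bs x] = \bb P(T \le 0) \le \exp(-\|\bs f_1-\bs f_0\|_2^2/(72\sigma^2)) = \exp(-n\|f_1-f_0\|_n^2/(72\sigma^2))$, uniformly over the ball.

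I do not expect a genuine obstacle here: once one conditions on $\bs x$, everything reduces to a one-dimensional Gaussian computation. The only point requiring care is the bookkeeping of the mean of $T$ under the alternative, where the separation radius $\tfrac13\|f_1-f_0\|_n$ is precisely calibrated so that, after the worst-case Cauchy--Schwarz loss, a residual shift of $\tfrac16\|f_1-f_0\|_n$ remains; squaring this and halving (the Gaussian exponent) produces the constant $72 = 2\cdot 6^2$ in the type II bound, just as $8 = 2\cdot 2^2$ appears in the type I bound.
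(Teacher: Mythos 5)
Your proof is correct and is essentially the same as the paper's: the indicator $\phi_n = \bs 1\{T>0\}$ with $T=v^\T(\bs y-\tfrac12(\bs f_0+\bs f_1))$ is precisely the likelihood-ratio test $\bs 1\{p_{f_1}^{(n)}(\bs x,\bs y)\ge p_{f_0}^{(n)}(\bs x,\bs y)\}$ used in the paper, and both proofs reduce the error bounds to the same one-dimensional Gaussian tail estimate. The only cosmetic difference is that you project onto the unit direction $v$ and apply Cauchy--Schwarz to control the mean shift under the alternative, whereas the paper expands the squared empirical norms and uses the triangle inequality; these yield identical constants.
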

\begin{proof}
By the Chernoff bound, using that under $\bb P_{f_0}^{(n)}(\,\cdot\,|\bs x)$,
\[
\sum_{i=1}^n (y_i-f_0(x_i))(f_1(x_i)-f_0(x_i)) \sim \mathcal N\Big(0, \sigma^2 \sum_{i=1}^n(f_1(x_i)-f_0(x_i))^2 \Big),
\]
the likelihood ratio test $\phi_n = \bs 1\{ p^{(n)}_{f_1}(\bs x,\bs y) \geq p^{(n)}_{f_0}(\bs x,\bs y) \}$ has type 1 error
\begin{align*}
\bb P_{f_0}^{(n)}[\phi_n|\bs x]
& = \bb P_{f_0}^{(n)}\Big( \sum_{i=1}^n (y_i-f_1(x_i))^2 \leq \sum_{i=1}^n (y_i-f_0(x_i))^2 \Big| \bs x\Big) \\
& = \bb P_{f_0}^{(n)}\Big( 2\sum_{i=1}^n(y_i-f_0(x_i))(f_1(x_i)-f_0(x_i))\geq \sum_{i=1}^n(f_1(x_i)-f_0(x_i))^2 \Big| \bs x\Big) \\
& \leq \exp\Big(-\frac{(\sum (f_1(x_i)-f_0(x_i))^2)^2}{8\sigma^2\sum(f_1(x_i)-f_0(x_i))^2}\Big)  \\
& = \exp(-n\|f_1-f_0\|_n^2/(8\sigma^2)).
\end{align*}
For $f$ such that $\|f-f_1\|_n \leq \|f_1-f_0\|_n/3$, note that
\[
\|f_0-f\|_n \geq \|f_0-f_1\|_n - \|f_1-f\|_n \geq \frac 23 \|f_0-f_1\|_n,
\]
so, similarly, the type 2 error of this test is
\begin{align*}
\bb P_f^{(n)}[1-\phi_n|\bs x]f
&= \bb P_f^{(n)}\Big( \sum_{i=1}^n (y_i-f_1(x_i))^2 \geq \sum_{i=1}^n (y_i-f_0(x_i))^2 \Big| \bs x\Big) \\
& = \bb P_f^{(n)}\Big( 2\sum_{i=1}^n(y_i-f(x_i))(f_0(x_i)-f_1(x_i)) \geq n \|f_0-f\|_n^2 - n \|f_1-f\|_n^2 \Big| \bs x\Big) \\
& \leq \bb P_f^{(n)}\Big( 2\sum_{i=1}^n(y_i-f(x_i))(f_0(x_i)-f_1(x_i)) \geq n \|f_0-f_1\|_n^2/3 \Big| \bs x\Big) \\
& \leq \exp(-n\|f_1-f_0\|_n^2/(72\sigma^2)).  
\end{align*}
\end{proof}

\begin{lemma}\label{l:tauRKHS}
Let $\bb H_\tau$ be the RKHS of the prior in \eqref{e:expoPrior}. If $f_0 \in \mathcal S^\beta(L)$ for some $\beta>0$ and $L>0$, then for all $\tau > 0$ and any sufficiently small $\eps>0$,
\[ \inf_{\substack{h \in \bb H_\tau : \\ \|h-f_0\| \leq \eps}} \|h\|_{\bb H_\tau}^2 \leq \tau^{-d}\Big(L^2 e^\tau \vee \eps^2 \exp(2\tau(L/\eps)^{1/\beta})\Big). \]
\end{lemma}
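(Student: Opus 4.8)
The plan is to exhibit an explicit near-minimiser $h$ of the infimum, namely a truncation of the spectral expansion of $f_0$, and then to estimate its approximation error $\|h-f_0\|$ and its RKHS norm $\|h\|_{\bb H_\tau}^2$ separately. Recall first that the RKHS of the prior \eqref{e:expoPrior} is $\bb H_\tau=\{g:\sum_j\ip{g,\varphi_j}^2/(s_j^\tau)^2<\infty\}$ with squared norm $\|g\|_{\bb H_\tau}^2=\sum_j\ip{g,\varphi_j}^2/(s_j^\tau)^2$, and that $(s_j^\tau)^2\asymp\tau^d e^{-\tau j^{1/d}}$ provides a lower bound $(s_j^\tau)^2\geq c\,\tau^d e^{-\tau j^{1/d}}$ for some constant $c>0$. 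I would then take $h:=\sum_{j\leq J}\ip{f_0,\varphi_j}\varphi_j$ with $J:=\lceil(L/\eps)^{d/\beta}\rceil$, which lies in $\bb H_\tau$ as a finite combination of basis functions. Since $f_0\in\mathcal S^\beta(L)$, for $j>J$ one has $\ip{f_0,\varphi_j}^2=j^{-2\beta/d}\bigl(j^{2\beta/d}\ip{f_0,\varphi_j}^2\bigr)\leq J^{-2\beta/d}\bigl(j^{2\beta/d}\ip{f_0,\varphi_j}^2\bigr)$, hence $\|h-f_0\|^2\leq J^{-2\beta/d}\|f_0\|_\beta^2\leq J^{-2\beta/d}L^2\leq\eps^2$ by the choice of $J$; so $h$ is admissible in the infimum.

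For the RKHS norm I would write $\|h\|_{\bb H_\tau}^2\leq c^{-1}\tau^{-d}\sum_{j\leq J}\ip{f_0,\varphi_j}^2 e^{\tau j^{1/d}}$ and bound the sum. The crude route pulls $e^{\tau J^{1/d}}$ out of the sum and uses $\sum_{j\leq J}\ip{f_0,\varphi_j}^2\leq\|f_0\|^2\leq\|f_0\|_\beta^2\leq L^2$, which together with $J^{1/d}\leq(L/\eps)^{1/\beta}+1$ yields $\|h\|_{\bb H_\tau}^2\leq c^{-1}\tau^{-d}L^2 e^\tau e^{\tau(L/\eps)^{1/\beta}}$. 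A sharper route, which directly produces the $\eps^2$ prefactor, is summation by parts: writing the sum in terms of the tails $A_k:=\sum_{j>k}\ip{f_0,\varphi_j}^2\leq L^2 k^{-2\beta/d}$ and using $e^{\tau(j+1)^{1/d}}-e^{\tau j^{1/d}}\lesssim\tau j^{1/d-1}e^{\tau j^{1/d}}$, one is left with $L^2 e^\tau+C\tau L^2\sum_{j<J}j^{-2\beta/d}j^{1/d-1}e^{\tau j^{1/d}}$, and the remaining sum is compared with $\int_1^{J^{1/d}}u^{-2\beta}e^{\tau u}\,du\lesssim\tau^{-1}(J^{1/d})^{-2\beta}e^{\tau J^{1/d}}$ (the integral being dominated by a neighbourhood of its upper endpoint once $\eps$ is small). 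Since $(J^{1/d})^{-2\beta}\asymp(\eps/L)^2$, this gives $\|h\|_{\bb H_\tau}^2\lesssim\tau^{-d}\bigl(L^2 e^\tau+\eps^2 e^{\tau(L/\eps)^{1/\beta}}\bigr)$.

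It remains to convert either estimate into the stated bound with exact constant one. Here I would use that $\eps$ is "sufficiently small" and is allowed to depend on $\tau$: since $e^{\tau(L/\eps)^{1/\beta}}$ grows faster than any power of $L/\eps$ as $\eps\downarrow0$, passing from the exponent $\tau(L/\eps)^{1/\beta}$ to $2\tau(L/\eps)^{1/\beta}$ supplies an extra factor that absorbs the constant $c^{-1}$, the factor $e^\tau$, the $\mathrm{O}(1)$ discrepancy in $J^{1/d}$, and (in the crude route) the polynomial ratio between $L^2$ and $\eps^2$, once $\eps$ lies below a threshold depending on $\tau$ and $L$. This yields $\inf_{\|h-f_0\|\leq\eps}\|h\|_{\bb H_\tau}^2\leq\tau^{-d}\bigl(L^2 e^\tau\vee\eps^2\exp(2\tau(L/\eps)^{1/\beta})\bigr)$, the $L^2 e^\tau$ term in the maximum covering the regime where $J$ does not grow.

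The main obstacle is precisely this last reconciliation: the truncation argument most naturally delivers an $L^2$ prefactor and an unspecified multiplicative constant rather than the stated $\eps^2$ with constant one, so one must either carry out the summation-by-parts refinement above or carefully exploit the freedom in the "sufficiently small $\eps$" clause. The only mildly delicate computation along the way is the uniform integral comparison $\int_1^{J^{1/d}}u^{-2\beta}e^{\tau u}\,du\lesssim\tau^{-1}(J^{1/d})^{-2\beta}e^{\tau J^{1/d}}$, which needs a short split of the integration range near the endpoint.
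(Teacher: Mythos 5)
Your truncation $h=\sum_{j\leq J}\ip{f_0,\varphi_j}\varphi_j$ with $J$ chosen so that $\eps J^{\beta/d}\geq L>\eps(J-1)^{\beta/d}$ is exactly the paper's near-minimiser, and the admissibility estimate $\|h-f_0\|\leq\eps$ is handled correctly. Where you diverge is in bounding $\|h\|_{\bb H_\tau}^2\asymp\tau^{-d}\sum_{j\leq J}\ip{f_0,\varphi_j}^2 e^{\tau j^{1/d}}$, and both of your routes run into the same structural problem. The paper does it in one line: write the summand as $\bigl(e^{\tau j^{1/d}}j^{-2\beta/d}\bigr)\cdot\bigl(j^{2\beta/d}\ip{f_0,\varphi_j}^2\bigr)$, note that $x\mapsto e^{\tau x}x^{-2\beta}$ is (log-)convex so its maximum over $\{1,\ldots,J^{1/d}\}\subset[1,J^{1/d}]$ sits at an endpoint, and bound the second factor's sum by $L^2$. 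This yields $\tau^{-d}\bigl(e^\tau\vee e^{\tau J^{1/d}}J^{-2\beta/d}\bigr)L^2\leq\tau^{-d}\bigl(L^2e^\tau\vee\eps^2 e^{\tau J^{1/d}}\bigr)$, and the passage to the exponent $2\tau(J-1)^{1/d}\leq 2\tau(L/\eps)^{1/\beta}$ only uses $J^{1/d}\leq 2(J-1)^{1/d}$, i.e.\ $J\geq 2$, i.e.\ $\eps<L$. The threshold is uniform in $\tau$.

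Your crude route lands on a prefactor $L^2$ rather than $\eps^2$, and you then propose to launder the discrepancy $L^2/\eps^2$ (and $c^{-1}$ and $e^\tau$) into the slack between $e^{\tau(L/\eps)^{1/\beta}}$ and $e^{2\tau(L/\eps)^{1/\beta}}$. That works only once $\tau(L/\eps)^{1/\beta}\gtrsim\log(L/\eps)$, so the threshold on $\eps$ inevitably degrades as $\tau\downarrow 0$, and you end up proving a strictly weaker, $\tau$-dependent statement. The same objection applies to your summation-by-parts route: the comparison $\int_1^{J^{1/d}}u^{-2\beta}e^{\tau u}\,du\lesssim\tau^{-1}(J^{1/d})^{-2\beta}e^{\tau J^{1/d}}$ is a Laplace-endpoint estimate that fails when $\tau J^{1/d}$ is of order one (for $\beta>1/2$ the left side stays bounded below while the right side decays in $J$), so again you need $\tau J^{1/d}$ large, which is a $\tau$-dependent smallness condition on $\eps$. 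Since the lemma is later applied with $\tau=\tau_n\to 0$, this uniformity is not cosmetic; the convexity observation avoids the issue entirely and is the missing idea in your argument.
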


\begin{proof}
For $\eps$ sufficiently small, let $J$ be a positive integer such that
\[
	\eps J^{\beta/d} \geq L > \eps (J-1)^{\beta/d},
\]
and define the function $h = \sum_{j \leq J} \ip{f_0,\varphi_j}\varphi_j \in \bb H_\tau$. Note that for this $J$ and $h$,
\[
	\|h-f_0\|^2 = \sum_{j>J} \ip{f_0,\varphi_j}^2 \leq J^{-2\beta/d} L^2 \leq \eps^2.
\]
By convexity of the function $x \mapsto e^{\tau x} x^{-2\beta}$ for positive $x$, it follows that
\[
	\max_{1\leq j \leq J}e^{\tau j^{1/d}}j^{-2\beta/d} = e^\tau \vee e^{\tau J^{1/d}} J^{-2\beta/d}.
\] 
Then we obtain
\begin{align*}
	\|h\|_{\bb H_\tau}^2
	& = \sum_{j \leq J} \tau^{-d}e^{\tau j^{1/d}} j^{-2\beta/d} \cdot
	j^{2\beta/d} \ip{f_0,\varphi_j}^2 \\
	& \leq \tau^{-d} (e^\tau \vee e^{\tau J^{1/d}} J^{-2\beta/d}) L^2 \\
	& \leq \tau^{-d}(L^2 e^\tau \vee \eps^2 \exp(2\tau(J-1)^{1/d}))
\end{align*}
so the statement of the lemma follows.
\end{proof}

\begin{lemma}\label{l:tauSB}
Let $\Pi_\tau$ be the prior in \eqref{e:expoPrior}. There exists a constant $C>0$ such that for all $\tau \in (0,1]$ and small enough $\eps > 0$,
\[
C^{-1}\tau^{-d} (\log \tfrac1\eps)^{d+1} \leq -\log\Pi_\tau(\|f\| \leq \eps) \leq C \tau^{-d} (\log \tfrac1\eps)^{d+1}.  
\]
\end{lemma}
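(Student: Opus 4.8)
The plan is to reduce the statement to an explicit small-ball estimate for a Gaussian series. Since $\varphi_1,\varphi_2,\ldots$ are orthonormal in $L^2(\mathcal X,G)$, under $\Pi_\tau$ one has $\|f\|^2 = \sum_{j\geq1}(s_j^\tau)^2Z_j^2$ with $(s_j^\tau)^2 \asymp \mu_j := \tau^d e^{-\tau j^{1/d}}$, and replacing $\eps$ by a fixed multiple of itself changes $\log(1/\eps)$ only by a bounded factor; so it suffices to sandwich $-\log\mathbb P(S\leq\eps^2)$ between constant multiples of $\tau^{-d}(\log\tfrac1\eps)^{d+1}$, where $S=\sum_j\mu_jZ_j^2$. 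In both directions the natural truncation level will be an index $N$ (resp.\ $M$) with $N^{1/d}\asymp\tau^{-1}\log\tfrac1\eps$, i.e.\ $N\asymp\tau^{-d}(\log\tfrac1\eps)^d$, so that the remaining factor $\log\tfrac1\eps$ must come from the per-coordinate small-ball cost.

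For the \emph{lower bound on the exponent} (an upper bound on the probability), I would discard all but the first $M$ terms of $S$ and use $\mu_j\geq\mu_M$ for $j\leq M$ to get $\mathbb P(S\leq\eps^2)\leq\mathbb P(\chi^2_M\leq\eps^2/\mu_M)$, then apply the elementary Chernoff bound $\mathbb P(\chi^2_M\leq y)\leq(ey/M)^{M/2}$, valid for $y\leq M$. Taking $M=\lfloor\tau^{-1}\log\tfrac1\eps\rfloor^d$ one checks $\tau M^{1/d}\leq\log\tfrac1\eps$, hence $M\mu_M = M\tau^d e^{-\tau M^{1/d}}\gtrsim(\log\tfrac1\eps)^d\,\eps$; for $\eps$ below a threshold (which, crucially, can be taken independent of $\tau$) this gives both $\eps^2/\mu_M\leq M$ and $\log\frac{M\mu_M}{e\eps^2}\gtrsim\log\tfrac1\eps$, so that $-\log\mathbb P(S\leq\eps^2)\geq\tfrac M2\log\frac{M\mu_M}{e\eps^2}\gtrsim\tau^{-d}(\log\tfrac1\eps)^{d+1}$.

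For the \emph{upper bound on the exponent}, I would split $S$ at level $N=\lceil 3\tau^{-1}\log\tfrac1\eps\rceil^d$ and use independence of the two blocks: $\mathbb P(S\leq\eps^2)\geq\mathbb P\!\big(\sum_{j\leq N}\mu_jZ_j^2\leq\tfrac{\eps^2}2\big)\,\mathbb P\!\big(\sum_{j>N}\mu_jZ_j^2\leq\tfrac{\eps^2}2\big)$. The tail block is handled by comparing $\sum_{j>N}\mu_j$ with $\int_N^\infty e^{-\tau x^{1/d}}\,\mathrm dx$ (substitution $u=\tau x^{1/d}$ and an incomplete-gamma tail bound), giving $\sum_{j>N}\mu_j\lesssim\tau^{d-1}N^{(d-1)/d}e^{-\tau N^{1/d}}\lesssim(\log\tfrac1\eps)^{d-1}\eps^3\leq\eps^2/4$ for small $\eps$, so the tail factor is $\geq1/2$ by Markov. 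For the head block I would bound $\sum_{j\leq N}\mu_jZ_j^2\leq\mu_1\chi^2_N$ with $\mu_1\leq\tau^d$, use $\mathbb P(\chi^2_N\leq y)\geq(c_3\sqrt{y/N}\wedge c_3)^N$, and arrive at $-\log\mathbb P\!\big(\sum_{j\leq N}\mu_jZ_j^2\leq\tfrac{\eps^2}2\big)\leq N\big(\log\tfrac1{c_3}+\tfrac12\log(2N)+\tfrac d2\log\tau+\log\tfrac1\eps\big)$, which I will show is $\lesssim N\log\tfrac1\eps\asymp\tau^{-d}(\log\tfrac1\eps)^{d+1}$.

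\textbf{Main obstacle.} The delicate point throughout is \emph{uniformity in $\tau\in(0,1]$}. For small $\tau$ the truncation $N\asymp\tau^{-d}(\log\tfrac1\eps)^d$ is huge, which at first sight blows up the head estimate via $\tfrac12\log(2N)\approx \tfrac d2\log\tfrac1\tau+\tfrac d2\log\log\tfrac1\eps$. The resolution, which I would make explicit, is that the factor $\tau^d$ inside $\mu_1$ is correspondingly tiny: in the bound above, $\tfrac12\log(2N)$ and the very negative $\tfrac d2\log\tau$ combine to $\tfrac d2\log\log\tfrac1\eps+O(1)$, the $\tau$-dependence cancelling exactly because $N\asymp\tau^{-d}(\log\tfrac1\eps)^d$; an analogous cancellation makes $\log\frac{M\mu_M}{e\eps^2}\gtrsim\log\tfrac1\eps$ hold uniformly. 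Once this bookkeeping is carried out, the threshold on $\eps$ is free of $\tau$ and both inequalities hold with a single constant $C$. The remaining ingredients — the integral comparison for the tail, the chi-square tail/Gaussian small-ball estimates, and absorbing the $\asymp$-constants on $s_j^\tau$ into $C$ — are routine.
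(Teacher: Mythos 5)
Your proposal is correct, and the overall architecture mirrors the paper's: truncate the series at an index of order $\tau^{-d}(\log\tfrac1\eps)^d$, control the tail by Markov, and reduce the head to a chi-square estimate. The lower bound on the exponent is essentially identical to the paper's (there they take $J$ to be the largest integer with $J>\eps^2\tau^{-d}e^{\tau J^{1/d}}$, bound the small ball by $\Pr(\chi^2_J\le\eps^2\tau^{-d}e^{\tau J^{1/d}})$, and apply the same Chernoff bound). The one genuine point of departure is the head of the probability lower bound. The paper performs a Gaussian change of measure that replaces all variances $\mu_j$, $j\le J$, by the \emph{smallest} one $\mu_J$, paying a Radon--Nikodym cost $\exp(-\tfrac\tau2\sum_{j\le J}(J^{1/d}-j^{1/d}))\gtrsim\exp(-\tau J^{(d+1)/d}/(d+1))$; by construction $J\mu_J\le\eps^2/2$, so the remaining event is $\Pr(\chi^2_J\le\text{(at least $J$)})\to\tfrac12$ by the CLT, i.e.\ one lands in the concentration regime of $\chi^2_J$ and the whole cost sits in the change-of-measure prefactor. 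You instead dominate all variances by the \emph{largest} $\mu_1\asymp\tau^d$, paying no change-of-measure cost but landing deep in the small-ball regime $\eps^2/(2\mu_1)\ll N$, where you invoke the elementary estimate $\Pr(\chi^2_N\le y)\gtrsim(c\sqrt{y/N})^N$. Both yield $-\log\Pr\lesssim N\log\tfrac1\eps\asymp\tau^{-d}(\log\tfrac1\eps)^{d+1}$, and you correctly identify the $\tau$-cancellation between $\tfrac12\log N$ and $\tfrac d2\log\tau$ that makes the bound uniform in $\tau\in(0,1]$. Your route is arguably more elementary (no Cameron--Martin-type manipulation, no $\sum_{j\le J}j^{1/d}\ge\tfrac{d}{d+1}J^{(d+1)/d}$ computation), at the price of needing a $\chi^2$ small-deviation bound rather than just $\Pr(\chi^2_N\le N)\to\tfrac12$; the two trade-offs are of comparable weight.
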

\begin{proof}
For the upper bound, we follow the proof of Lemma 11.47 in \cite{ghosal2017}. For all sufficiently small $\eps$, let $J \in \bb N$ be such that
\[
	\tau^d J e^{-\tau J^{1/d}} \leq \eps^2/2 \leq \tau^d (J-1) e^{-\tau (J-1)^{1/d}}.
\]
We note that such $J$ exists, since the function $x\rightarrow x^de^{-x}$ is monotone decreasing for $x\geq d$ and the largest solution $x^*$ of $x^de^{-x}=\eps^2/2$ for $\eps\rightarrow 0$ tends to infinity. Hence  $\tau J^{1/d} \asymp x^* \to \infty$ and for  $\tau (J-1)^{1/d}$ the function will take a larger value than $\eps^2/2$. Furthermore, note that $\sum_{j \leq J} j^{1/d} \geq \int_0^J x^{1/d} \,\mathrm dx = \frac{d}{d+1}J^{(d+1)/d}$, so
\begin{align*}
\Pi_\tau\Big( \sum_{j \leq J} \ip{f,\varphi_j}^2 \leq \eps^2/2 \Big) 
& \geq \Pr\Big( \sum_{j \leq J} \tau^d e^{-\tau J^{1/d}} Z_j^2 \leq \eps^2/2 \Big) \prod_{j \leq J} \frac{e^{-\tau J^{1/d}/2}}{e^{-\tau j^{1/d}/2}}  \\
& \geq \Pr\Big(\sum_{j \leq J} Z_j^2 \leq J \Big) \exp\Big(-\tau \frac{J^{(d+1)/d}}{d+1} \Big) .
\end{align*}
Note that $J \geq \tau^d J \to \infty$ as $\eps\to 0$, and $\Pr(\sum_{j \leq J} Z_j^2 \leq J) \to 1/2$ using the Central Limit Theorem. Regarding the remaining part of the sum, by Markov's inequality,
\begin{align*}
\Pi_\tau\Big( \sum_{j > J} f_j^2 \leq \eps^2/2 \Big) 
& \geq 1 - 2\eps^{-2} \sum_{j > J} \tau^d e^{-\tau j^{1/d}} \\
& \geq 1 - 2\eps^{-2} \int_J^\infty \tau^d e^{-\tau x^{1/d}} \,\mathrm dx \\
& = 1 - 2\eps^{-2} \sum_{i=0}^{d-1} \frac{d!}{i!} \tau^i J^{i/d} e^{-\tau J^{1/d}} \\
& \geq 1 -  \sum_{i=0}^{d-1} \frac{d!}{i!} (\tau J^{1/d})^{i-d} \to 1,
\end{align*}
where the identity for the integral follows by repeated partial integration. Combining all of the above and realising that $\log \frac1\eps \asymp \tau J^{1/d}$ as $\eps \to 0$, it follows that
\[
-\log \Pi_\tau(\|f\|^2 \leq \eps^2) \leq \tau \frac{J^{(d+1)/d}}{d+1} + \log 2 + o(1) \lesssim (\log \tfrac1\eps)^{d+1} \tau^{-d}.
\]

To bound the small ball probability from below, fix $\delta > 1$ and (for $\eps>0$ sufficiently small) take $J$ to be the largest positive integer satisfying the inequality $J > \eps^2 \tau^{-d} e^{\tau J^{1/d}}$. The Chernoff bound for the $\chi_J^2$ distribution yields
\begin{align*}
\Pi_\tau(\|f\|\leq\eps)
& \leq \Pr \Big(\sum_{j \leq J} Z_j^2 \leq \eps^2 \tau^{-d} e^{\tau J^{1/d}} \Big) \\
& \leq \exp\Big(J/2 - \eps^2 \tau^{-d} e^{\tau J^{1/d}}/2 + (J/2)\log(\eps^2 \tau^{-d} e^{\tau J^{1/d}}/J) \Big) \\
& \leq \exp\Big((1+(1-\delta)\tau J^{1/d})J/2\Big),
\end{align*}
so, using again $\log \frac1\eps \asymp \tau J^{1/d}$,
\[ - \log \Pi_\tau(\|f\| \leq \eps) \geq -J/2 + \frac{\delta-1}{2} \tau J^{(d+1)/d} \gtrsim \tau J^{(d+1)/d} \gtrsim \tau^{-d} (\log \tfrac1\eps)^{d+1}.  \]
\end{proof}

\begin{lemma}\label{l:dimRKHS}
Let $\Pi_D$ be the prior defined in \eqref{e:dimprior} and $\bb H_D$ its RKHS. Then for any $\eps>0$, $f_0 \in \mathcal S^\beta(L)$ and $D \geq (L/\eps)^{d/\beta}$, 
\[ \inf_{\substack{h \in \bb H_\lambda : \\ \|h-f_0\| \leq \eps}} \|h\|_{\bb H_D}^2 \leq D \|f_0\|^2. \]
\end{lemma}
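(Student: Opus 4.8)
The plan is to exhibit one explicit candidate in $\bb H_D$ that meets the constraint $\|h-f_0\|\le\eps$ and to read off its RKHS norm. First I would make the RKHS completely concrete. The prior \eqref{e:dimprior} has spectral coefficients $s_j^D = D^{-1/2}$ for $j\le D$ and $s_j^D = 0$ for $j>D$, so by the description of $\bb H_\lambda$ given at the start of Section~\ref{sec:proof:examples}, the space $\bb H_D$ equals $\Span(\varphi_1,\dots,\varphi_D)$ and for every $g$ in it one has $\|g\|_{\bb H_D}^2 = D\sum_{j=1}^D\ip{g,\varphi_j}^2 = D\|g\|^2$.

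Next I would take $h$ to be the truncation $f_0^{\le D} = \sum_{j\le D}\ip{f_0,\varphi_j}\varphi_j$, which manifestly lies in $\bb H_D$. Its approximation error is controlled by the Sobolev hypothesis $f_0\in\mathcal S^\beta(L)$: splitting off the factor $j^{2\beta/d}$, one gets $\|h-f_0\|^2 = \sum_{j>D}\ip{f_0,\varphi_j}^2 \le D^{-2\beta/d}\sum_{j>D} j^{2\beta/d}\ip{f_0,\varphi_j}^2 \le D^{-2\beta/d}L^2$, and the assumption $D\ge (L/\eps)^{d/\beta}$ is exactly what makes this at most $\eps^2$. Hence $h$ is admissible in the infimum.

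Finally, plugging this $h$ into the RKHS norm formula gives $\|h\|_{\bb H_D}^2 = D\|f_0^{\le D}\|^2 \le D\|f_0\|^2$, which is the asserted bound. I do not anticipate any real obstacle: the proof is a one-line choice of $h$ followed by two routine estimates. The only points requiring a little care are reading the RKHS inner product off the degenerate (rank-$D$) covariance and noting that the threshold $D\ge(L/\eps)^{d/\beta}$ is precisely the one that places the truncation within distance $\eps$ of $f_0$.
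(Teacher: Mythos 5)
Your proof is correct and matches the paper's argument exactly: choose $h=f_0^{\le D}$, use the Sobolev bound to get $\|h-f_0\|^2\le D^{-2\beta/d}L^2\le\eps^2$, and read off $\|h\|_{\bb H_D}^2=D\sum_{j\le D}\ip{f_0,\varphi_j}^2\le D\|f_0\|^2$. The only difference is that you spell out the identification of $\bb H_D$ with $\Span(\varphi_1,\dots,\varphi_D)$ and its norm, which the paper leaves implicit by referring back to Lemma~\ref{l:tauRKHS}.
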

\begin{proof}
The argument is as in the proof of Lemma~\ref{l:tauRKHS}. Taking $h = \sum_{j=1}^D \ip{f_0,\varphi_j}\varphi_j$ we note that $\|h-f_0\|^2 \leq D^{-2\beta/d} L^2$ and so the infimum is bounded by $\|h\|_{\bb H_D}^2 = D \sum_{j\leq D} \ip{f_0,\varphi_j}^2$. 
\end{proof}

\begin{lemma}\label{l:dimSB}
Let $\bb H_\lambda$ be the RKHS of the prior \eqref{e:dimprior}. There exist constants $\eps_0>0$ and $C>0$ such that for all $\eps \in (0,\eps_0)$ and $\lambda \in (0,1)$,
\[ C^{-1} D \log \frac 1\eps \leq -\log \Pi_\lambda(\|f\| \leq \eps) \leq C D \log \frac 1\eps. \]
\end{lemma}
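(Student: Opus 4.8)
The plan is to use the explicit structure of the prior \eqref{e:dimprior}. Since $(\varphi_j)_{j\in\bb N}$ is orthonormal in $L^2(\mathcal X,G)$, a draw $f\sim\Pi_D$ has $\|f\|^2 = D^{-1}\sum_{j=1}^D Z_j^2$ with $Z_1,\dots,Z_D$ i.i.d.\ standard normal, so $\Pi_D(\|f\|\le\eps) = \Pr\big(\sum_{j=1}^D Z_j^2 \le D\eps^2\big)$ is a $\chi^2_D$ tail at level $D\eps^2$. The lemma thus reduces to two-sided bounds, linear in $D\log(1/\eps)$, on $-\log\Pr(\chi^2_D\le D\eps^2)$; the parameter written as $\lambda\in(0,1)$ in the statement should be read as $D\in\bb N$.

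For the upper bound on the small-ball probability (equivalently, the lower bound on $-\log\Pi_D$), I would apply a Chernoff argument: for $s>0$, $\Pr\big(\sum_j Z_j^2\le D\eps^2\big) \le e^{sD\eps^2}\,\bb E[e^{-sZ_1^2}]^D = e^{sD\eps^2}(1+2s)^{-D/2}$, and the choice $1+2s=\eps^{-2}$ (legitimate since $\eps<1$, so $s>0$) gives $\Pi_D(\|f\|\le\eps)\le\exp\big(\tfrac D2(1-\eps^2)-D\log\tfrac1\eps\big)$. Hence $-\log\Pi_D(\|f\|\le\eps)\ge D\log\tfrac1\eps-\tfrac D2$, which is at least $\tfrac12 D\log\tfrac1\eps$ as soon as $\eps\le e^{-1}$.

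For the lower bound on the small-ball probability (the upper bound on $-\log\Pi_D$), rather than integrating the $\chi^2_D$ density I would use the coarser inclusion $\bigcap_{j\le D}\{Z_j^2\le\eps^2\}\subseteq\{\sum_{j\le D}Z_j^2\le D\eps^2\}$, so by independence $\Pi_D(\|f\|\le\eps)\ge\Pr(|Z_1|\le\eps)^D$. Bounding $\Pr(|Z_1|\le\eps)\ge\tfrac{2\eps}{\sqrt{2\pi}}e^{-\eps^2/2}\ge c\eps$ with $c=\sqrt{2/(\pi e)}$ for $\eps\le1$ yields $-\log\Pi_D(\|f\|\le\eps)\le D\log\tfrac1{c\eps}=D\log\tfrac1\eps+D\log\tfrac1c\le 2D\log\tfrac1\eps$ once $\log\tfrac1\eps\ge\log\tfrac1c$. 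Taking $\eps_0=e^{-1}$ (which lies below $c$) and $C=2$ then makes both estimates hold for all $\eps\in(0,\eps_0)$ and all $D\in\bb N$.

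I do not expect a genuine obstacle: this is a routine two-sided small-ball estimate. The only points needing care are that $\eps<1$ is exactly what makes the Chernoff optimiser positive, and that $\eps_0$ and $C$ can be taken as absolute constants independent of $D$ — which is automatic, since both bounds are uniform in $D$ and the only $\eps$-dependence enters through $\log(1/\eps)$, dominating the $O(D)$ and $O(1)$ remainders once $\log(1/\eps)\ge1$. If one prefers a sharper lower bound, the standard alternative is to lower-bound the $\chi^2_D$ density integral and apply Stirling to $\Gamma(D/2+1)$, after which the $\tfrac D2\log D$ terms cancel and the same conclusion follows.
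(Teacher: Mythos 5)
Your argument is correct, and the two halves differ in how closely they track the paper. For the lower bound on $-\log\Pi_D(\|f\|\le\eps)$ (upper bound on the small-ball probability) you use the same Chernoff bound as the paper, with the same optimiser $s=(\eps^{-2}-1)/2$, arriving at the same inequality $-\log\Pi_D(\|f\|\le\eps)\ge D\log\tfrac1\eps-\tfrac D2(1-\eps^2)$. For the upper bound on $-\log\Pi_D$ (lower bound on the small-ball probability), however, you take a genuinely different and more elementary route: rather than lower-bounding the $\chi^2_D$ density integral via partial integration and Stirling's approximation of $\Gamma(D/2+1)$ as the paper does, you use the inclusion $\bigcap_{j\le D}\{Z_j^2\le\eps^2\}\subseteq\{\sum_j Z_j^2\le D\eps^2\}$ together with independence, reducing the problem to a one-dimensional Gaussian small-ball estimate $\Pr(|Z_1|\le\eps)\ge c\eps$. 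This buys two things: it avoids any asymptotics in $D$ (no Stirling, no $\tfrac12\log(\pi D)$ remainder to absorb), so the uniformity in $D$ is manifest rather than needing to be argued; and it yields explicit absolute constants ($\eps_0=e^{-1}$, $C=2$). The paper's route is the standard one for $\chi^2$ small balls and extends more readily to non-i.i.d.\ variance structures, but for this specific prior your product bound is cleaner. Your reading of $\lambda$ as $D\in\bb N$ is also the right interpretation of the (slightly sloppy) lemma statement.
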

\begin{proof}
For $X_D^2$ a chi-squared random variable with $D$ degrees of freedom and $t = (\eps^{-2}-1)/2 > 0$, the Chernoff bound gives
\begin{align*}
\Pi_\lambda(\|f\| \leq \eps)
& = \Pr \Big(D^{-1} \sum_{j=1}^D Z_j^2 \leq \eps^2\Big) \\
& \leq e^{tD \eps^2} \bb E e^{-t X_D^2} \\
& = \exp(D/2 - D\eps^2/2 + D \log \eps).
\end{align*}
On the other hand, note first that by Stirling's approximation for the Gamma function,
\[
\Gamma(D/2+1) \lesssim \sqrt{\pi D} \Big(\frac{D}{2e}\Big)^{D/2},
\]
so by partial integration,
\begin{align*}
\Pi_\lambda(\|f\| \leq \eps)
& = \int_0^{D\eps^2} \frac{x^{D/2-1} e^{-x/2}}{2^{D/2}\Gamma(D/2)} \,dx \\
& = \frac{(D\eps^2)^{D/2} e^{-D\eps^2/2}}{2^{D/2}\Gamma(D/2+1)} + \int_0^{D\eps^2} \frac{x^{D/2} e^{-x/2}}{2^{D/2+1}\Gamma(D/2+1)} \,dx \\
& \geq \frac{\eps^D e^{-D\eps^2/2}}{(2/D)^{D/2} \Gamma(D/2+1)} \\
& \gtrsim \exp\Big(D \log \eps - D\eps^2/2 - \tfrac 12 \log (\pi D) + D/2\Big).
\end{align*}
The result follows by realising that the term $D \log \eps$ dominates for small $\epsilon$.
\end{proof}

\bibliography{references2025}

\end{document}